\theoremstyle{plain}
\newtheorem{theorem}{Theorem}[section]
\newtheorem{corollary}[theorem]{Corollary}
\newtheorem{proposition}[theorem]{Proposition}
\newtheorem{lemma}[theorem]{Lemma}
\numberwithin{theorem}{section}
\numberwithin{equation}{section}
\newcommand{\average}{{\mathchoice {\kern1ex\vcenter{\hrule height.4pt
width 6pt depth0pt} \kern-9.7pt} {\kern1ex\vcenter{\hrule
height.4pt width 4.3pt depth0pt} \kern-7pt} {} {} }}
\def\R{\mathbb{R}}
\renewcommand{\a }{\alpha }
\renewcommand{\b }{\beta }
\renewcommand{\d}{\delta }
\newcommand{\D }{\Delta }
\newcommand{\tr }{\hbox{ tr } }
\newcommand{\e }{\varepsilon }
\newcommand{\g }{\gamma}
\newcommand{\G }{\Gamma}
\renewcommand{\l }{\lambda }
\newcommand{\n }{\nabla }
\newcommand{\vp }{\varphi }
\renewcommand{\phi}{\varphi}
\newcommand{\rh }{\rho }
\newcommand{\s }{\sigma }
\renewcommand{\t }{\tau }
\renewcommand{\th }{\theta }
\renewcommand{\O }{\Omega }
\newcommand{\ov}{\overline}
\newcommand{\be}{\begin{equation}}
\newcommand{\ee}{\end{equation}}
\newcommand{\de}{\partial}
\newcommand{\ti}{\widetilde}
\renewcommand{\k}{\kappa}
\newcommand{\calO }{\mathcal{O}}
\newcommand{\calC }{\mathcal{C}}
\newcommand{\calD }{\mathcal{D}}
\newcommand{\N}{\mathbb{N}}
\newcommand{\cC}{{\mathcal C}}
\newcommand{\cD}{{\mathcal D}}
\newcommand{\cR}{{\mathcal R}}
\newcommand{\B}{{Q}}
\renewcommand{\epsilon}{\varepsilon}
\begin{document}
 
\title[Hardy-Sobolev inequality with singularity a curve]
{  Hardy-Sobolev inequality with singularity a curve}
\author{Mouhamed Moustapha Fall}
\address{M. M. F.: African Institute for Mathematical Sciences in Senegal, KM 2, Route de
Joal, B.P. 14 18. Mbour, Senegal.}
\email{mouhamed.m.fall@aims-senegal.org}

\author{El hadji Abdoulaye Thiam}
\address{E. H. A. T.:African Institute for Mathematical Sciences in Senegal, KM 2, Route de
Joal, B.P. 14 18. Mbour, Senegal. }
\email{elhadji@aims-senegal.org}
\begin{abstract}
We consider a     bounded domain $\O$ of $\R^N$, $N\ge3$, and $h$ a continuous function on $\O$. Let $\G$ be  a closed curve contained in $\O$.  We study existence of positive solutions $u \in H^1_0\left(\O\right)$ to the  equation
$$
-\D u+h u=\rho^{-\s}_\G u^{2^*_\s-1} \qquad \textrm{ in } \O
$$
where $2^*_\s:=\frac{2(N-\s)}{N-2}$, $\s\in (0,2)$, and $\rho_\G$ is the distance function to $\G$.  For $N\geq 4$,
 we find a sufficient condition, given by the local geometry of the curve,  for the existence of a ground-state solution. In the case $N=3$, we   obtain existence of ground-state solution provided  the trace of the regular part of the   Green of $-\D+h$ is positive at a point of the curve.   
\end{abstract}
\maketitle
\section{Introduction}\label{Intro}
For  $N\geq 3$, $0\leq k\leq N-1$ and $\s\in[0,2)$,     we consider   the Hardy-Sobolev  inequality
\begin{equation}\label{Hardy-Sobolev}
\int_{\R^N} |\nabla v|^2 dx \geq C  \biggl(\int_{\R^N} |z|^{-\s} |v|^{2^*_\s} dx\biggl)^{2/2^*_\s}\qquad \textrm{ for all $v \in \mathcal{D}^{1,2}({\R^N})$,}
\end{equation}
where $x=(t,z)\in \R^k \times \R^{N-k}$,  $C= C(N,\s,k)>0$ and $  2^*_\s:= \frac{2(N-\s)}{N-2}$. Here    the Sobolev space  $\calD^{1,2}(\R^N)$ is given by the completion of $C^\infty_c(\R^N)$ with respect to the norm $ v\longmapsto\left(\int_{\R^N} |\n v|^2 dx\right)^{1/2}.$
Inequality \eqref{Hardy-Sobolev} interpolates between cylindrical Hardy inequality, which corresponds to the case $\s=2$ and $k\not=N-2$, and the Sobolev inequality which is  the case $\s=0$. Moreover it is invariant under scaling  on $\R^N$ and by translations in the $t$-direction.
 It is well known   that in the case of Hardy inequality, $\s=2$ and $k\not=N-2$, there is no positive constant $C$ and $v\in \cD^{1,2}(\R^N)$ for which equality holds in  \eqref{Hardy-Sobolev}.  For $\s\in [0, 2)$, the best positive constant $C$ in \eqref{Hardy-Sobolev} is  
\be\label{eq:Hard-Sob-sharp}
S_{N,\s}:=\inf\left\{ \int_{\R^N} |\nabla v|^2 dx, \,\, v\in \cD^{1,2}(\R^N) \textrm{ and }\,  \int_{\R^N} |z|^{-\s} |v|^{2^*_\s} dx=1 \right\}.
%
\ee
In the case $\s=0$, $S_{N,0}$ is achieved by the standard bubble $(1+|x|^2)^{\frac{2-N}{2}}$, which is unique up to scaling 	and translations, see e.g. Aubin \cite{Talenti} and Talenti \cite{Aubin}.    For $k=0$, \eqref{Hardy-Sobolev} is a particular case of the Caffarelli-Kohn-Nirenberg inequality, see \cite{CKN}.  In this case,  Lieb  showed in \cite{Lieb} that  the  function   $(1+|x|^{2-\s})^{\frac{2-N}{2-\s}}$ achieves $S_{N,\s}$. When $k=N-1$, Musina proved in \cite{Musina} that the support of the minimizer is contained in a half-space. Therefore \eqref{Hardy-Sobolev} becomes the Hardy-Sobolev inequality with singularity all the boundary of the halfspace. \\
 For $1\leq k\leq N-2$ and $\s\in (0,2)$, Badiale and Tarentello proved the existence of a minimizer $w$ for \eqref{eq:Hard-Sob-sharp} in their paper \cite{BT}, where they were motivated by questions from astrophysics. Moreover   
Mancini, Fabri and Sandeep shwoed decay and symmetry properties of $w$ in    \cite{FMS}. In particular, they prove that $w(t,z)=\th(|t|, |z|) $, for some positive function $\th$.    An interesting classification result was also derived in \cite{FMS} when $\s=1$,  that every minimizer is of the form $ ((1+|z|)^2+|t|^2)^{\frac{2-N}{2}}$, up to scaling in $\R^N$ and translations in the $t$-direction. \\
Since in this paper we are interested with Hardy-Sobolev inequality with  weight singular at a given curve, our asymptotic energy level is given by $S_{N,\s}$  with     $k=1$ and $\s\in (0,2)$.\\

Let $\O$ be a bounded domain in $\R^N$, $N\geq 3$, and $h$ a continuous function on $\O$. Let  $\G \subset \O$ be a smooth closed curve. In this paper, we are concerned with the existence of minimizers for the infinimum
\be \label{eq:min-to-study}
\mu_{h}(\O,\G):=\inf_{ u\in  H^1_0(\O)  }\frac{ \displaystyle \int_{\O} |\nabla u|^2 dx +\int_\O h u^2 dx }{  \displaystyle \left(  \int_{\O} \rho_\G^{-\s} |u|^{2^*_\s} dx \right)^{\frac{2}{2^*_\s}}},
\ee
where $\s\in[0,2]$,  $\displaystyle 2^*_\s:= \frac{2(N-\s)}{N-2}$ and $ \rho_\G(x):=\textrm{dist}(x,\G)$.  Here and in the following, we assume that $-\D+h$ defines a  coercive bilinear form on $H^1_0(\O)$ ---which is a necessary condition for the existence of minimmizer for $\mu_{h}(\O,\G)$. We are interested with the effect of the geometry and/or the location of the curve $\G$ on  the existence of minimmizer for $\mu_{h}(\O,\G)$.\\
We not that for $\s=0$,  \eqref{eq:min-to-study} reduces to the famous Brezis-Nirenberg problem \cite{BN}. In this case,  for $N\geq 4$ it is enough that $h(y_0)<0$ to get a minimizer, whereas for $N=3$, the problem is no more local and existence of minimizers is guaranteed by the positiveness of  a certain mass  ---the trace of the regular part of the  Green function of the operator $-\D+h$ with zero Dirichlet data, see Druet  \cite{Druet}.   For $\s=2$, the problem reduces to 	 a linear eigenvalue problem with Hardy potential,  existence and nonexistence results  were  obtained by the second author in \cite{Thiam}.  \\
Here, we deal with the case $\s\in (0,2)$.  Our results  exhibit similar local/global phenomenon as in \cite{BN} and \cite{Druet}, with the additional property that for $N\geq 4$, the curvature of the curve at a point $y_0$  tells how much     $h(y_0)$ should be negative, while positive mass at a point $y_0\in \G$ is enough in dimension $N=3$. \\

Our first main result is the following
\begin{theorem}\label{th:main1}
Let $N \geq 4$, $\s\in (0,2)$ and   $\O$  be a   bounded domain of $\R^N$. Consider  $\G$ a smooth closed curve contained in $\O$.
Let  $h$ be  a continuous function such that the linear operator $-\D+h$ is coercive. Then there exists a positive constant $C_{N,\s}$, only depending on $N$ and $\s$ with the property that if  there exists  $y_0\in \G$  such that 
\begin{equation}\label{eq:h-bound-main-th-1}
h(y_0)<- C_{N,\s} |\k (y_0)|^2
\end{equation}
then $\mu_{h}\left(\O,\G\right) < S_{N,\s},$ and $\mu_{h}\left(\O,\G\right)$ is achieved by a positive function. Here $\k:\G\to \R^N$ is the curvature vector of   $\G$.
\end{theorem}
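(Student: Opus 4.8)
The plan is to prove the theorem in two stages: first reduce the existence of a minimizer to the strict inequality $\mu_h(\O,\G)<S_{N,\s}$, and then establish this inequality by a sharp test-function computation in which the two competing first-order corrections are controlled respectively by $h(y_0)$ and by the curvature vector $\k(y_0)$. Throughout I write $Q(u):=\big(\int_\O|\n u|^2\,dx+\int_\O h u^2\,dx\big)\big/\big(\int_\O\rho_\G^{-\s}|u|^{2^*_\s}\,dx\big)^{2/2^*_\s}$.

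\textbf{Reduction to a strict energy inequality.} Take a minimizing sequence $(u_n)\subset H^1_0(\O)$ for $\mu_h(\O,\G)$, normalized by $\int_\O \rho_\G^{-\s}|u_n|^{2^*_\s}\,dx=1$. Coercivity of $-\D+h$ makes $(u_n)$ bounded in $H^1_0(\O)$, so up to a subsequence $u_n\weak u$ in $H^1_0$ and a.e. The standard concentration–compactness analysis for this functional shows that a minimizing sequence can lose compactness only through bubbling, and the smallest energy threshold at which this happens is exactly the model constant $S_{N,\s}$ (concentration along $\G$), interior Sobolev concentration occurring at a level that is no smaller. Hence the strict inequality $\mu_h(\O,\G)<S_{N,\s}$ excludes this loss, giving $u_n\to u$ strongly, so that $u$ realizes the infimum. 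Replacing $u$ by $|u|$ leaves $Q$ unchanged, and the strong maximum principle for the coercive operator $-\D+h$ upgrades the nonnegative minimizer to a positive one. Everything therefore reduces to producing, for some small $\eps$, a function $u_\eps\in H^1_0(\O)$ with $Q(u_\eps)<S_{N,\s}$.

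\textbf{Test functions and energy expansion.} Fix $y_0\in\G$ and introduce Fermi coordinates $(t,z)\in\R\times\R^{N-1}$ along $\G$ near $y_0$, with $t$ the arclength and $z$ the normal displacement, so that $y_0\leftrightarrow(0,0)$ and the $t$-axis is the tangent line of $\G$ at $y_0$. Let $w$ be the extremal for $S_{N,\s}$ in the model case $k=1$, whose existence (see \cite{BT}) and whose symmetry $w(t,z)=\th(|t|,|z|)$ together with its decay (see \cite{FMS}) I shall use. I set $u_\eps(x)=\eps^{-(N-2)/2}\,w\big(\Phi^{-1}(x)/\eps\big)\,\chi(x)$, with $\Phi$ the Fermi chart and $\chi$ a fixed cutoff supported near $y_0$; as $\eps\to0$ this concentrates at $y_0$, where on scale $\eps$ the curve is approximated by its tangent line and $\rho_\G$ by the flat distance $|z|$. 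Expanding $Q(u_\eps)$, the leading order reproduces the model Rayleigh quotient and gives $S_{N,\s}$. The potential contributes $\int_\O h u_\eps^2 = h(y_0)\,\eps^2\int_{\R^N}w^2\,dx+o(\eps^2)$ for $N\ge5$, where $w\in L^2(\R^N)$. The geometric corrections from the second-order expansion of the pullback metric $g$, of the density $\sqrt{\det g}$, and of $\rho_\G^{-\s}$ split into a part odd in $z$, which carries the first power of $\k$ and integrates to zero against the even profile $w$, and an even part carrying $|\k(y_0)|^2$, which survives. Collecting terms yields $Q(u_\eps)=S_{N,\s}+\eps^2\big(c_\k|\k(y_0)|^2+c_h\,h(y_0)\big)+o(\eps^2)$ for two positive constants $c_\k,c_h$ depending only on $N$ and $\s$; setting $C_{N,\s}:=c_\k/c_h$, the hypothesis $h(y_0)<-C_{N,\s}|\k(y_0)|^2$ makes the bracket negative, so $Q(u_\eps)<S_{N,\s}$ for small $\eps$.

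\textbf{Main obstacle and the borderline dimension.} The delicate point is precisely this geometric expansion: one must compute to second order in the normal variable the metric $g$ in Fermi coordinates, its volume density, and $\rho_\G$, isolate the $|\k(y_0)|^2$-terms, and verify both that the surviving integrals against $w$ and $\n w$ converge — which is exactly where the decay of $w$ and the restriction $N\ge4$ enter — and that their combination produces a strictly positive $c_\k$, so that $C_{N,\s}$ is a genuine positive constant. The dimension $N=4$ must be handled separately, since there $\int_{\R^N}w^2$ diverges logarithmically: the potential term is then of order $\eps^2|\log\eps|$ and dominates the order-$\eps^2$ curvature term, so that any $h(y_0)<0$ already yields $Q(u_\eps)<S_{N,\s}$; as the stated condition forces $h(y_0)<-C_{N,\s}|\k(y_0)|^2\le 0$, it covers this case as well, completing the argument for all $N\ge4$.
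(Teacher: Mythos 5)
Your overall strategy coincides with the paper's: reduce to the strict inequality $\mu_h(\O,\G)<S_{N,\s}$ (the paper does this via a localized almost-sharp Hardy--Sobolev inequality, Lemma \ref{OuiOui}, plus Brezis--Lieb, rather than by invoking concentration--compactness, but either route is standard), and then obtain the strict inequality by expanding the Rayleigh quotient of a scaled, cut-off extremal $w$ in Fermi coordinates, with the odd-in-$z$ curvature terms killed by the symmetry $w(t,z)=\th(|t|,|z|)$. For $N\ge 5$ your expansion $Q(u_\e)=S_{N,\s}+\e^2\left(c_\k|\k(y_0)|^2+c_h h(y_0)\right)+o(\e^2)$ is exactly Proposition \ref{Proposition2}.

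There is, however, a genuine error in your treatment of the borderline case $N=4$. You claim that there the potential term is of order $\e^2|\log\e|$ while the curvature correction remains of order $\e^2$, so that it is dominated and any $h(y_0)<0$ suffices. This is false: since $w(x)\sim |x|^{2-N}$ and $|\n w(x)|\sim |x|^{1-N}$ at infinity, in dimension $4$ the surviving curvature integrals $\int_{\B_{r/\e}}|z|^2|\n w|^2\,dx$ and $\int_{\B_{r/\e}}|z|^2|\de_t w|^2\,dx$ diverge logarithmically exactly like $\int_{\B_{r/\e}}w^2\,dx$ (indeed the Pohozaev-type identity \eqref{eq:z-sqrt-w-2-star} gives $\int_{\B_{r/\e}}|z|^2|\n w|^2\,dx=(N-1)\int_{\B_{r/\e}}w^2\,dx+O(1)$ in $N=4$, the $|z|^{2-\s}w^{2^*_\s}$ integral being bounded). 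Hence the curvature contribution is also of size $\e^2|\log\e|$, with a \emph{positive} coefficient proportional to $|\k(y_0)|^2$, and it must be weighed against the potential term rather than absorbed into the error. This is precisely why the paper's condition in dimension $4$ is $h(y_0)<-\tfrac32|\k(y_0)|^2$ and not merely $h(y_0)<0$: one bounds $\tfrac{1}{N-1}\int|z|^2|\de_tw|^2+\tfrac{1}{2(N-1)}\int|z|^2|\n w|^2\le\tfrac{3}{2(N-1)}\int|z|^2|\n w|^2$, converts via \eqref{eq:z-sqrt-w-2-star} into $\tfrac32\int w^2$ plus bounded terms, and then the total coefficient of $\e^2|\log\e|$ is a positive multiple of $\tfrac32|\k(y_0)|^2+h(y_0)$. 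Your argument as written does not establish the theorem for $N=4$; replacing the "domination" claim by this coefficient comparison repairs it and recovers $C_{4,\s}=\tfrac32$.
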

Inequality \eqref{eq:h-bound-main-th-1} in Theorem \ref{th:main1} shows that the \textit{sign} of the  directional curvatures of $\G$ is not important but the \textit{size} of the curvature $\k$ at a point is.  

For the explicit  value of $C_{N,\s}$ appearing  in \eqref{eq:h-bound-main-th-1},   we refer the reader to Proposition  \ref{Proposition2} below. It is given by weighted integrals involving     partial derivatives of $w$,  a minimizer for $S_{N,\s}$. In the case $N=4$, we have $C_{N,\s}= \frac{3}{2}$.
 
We now give a consequence    of Theorem \ref{eq:h-bound-main-th-1}  in the case where $h\equiv \lambda$ a constant function. We denote by $\l_1(\O)>0$ the first Dirichlet eigenvalue of  $-\D$ in $\O$.  It is easy to see that $-\D+\l $ is coercive for every $\l>-\l_1(\O)$.  In our next result, we will consider a curve   $\G$ with curvature vanishing at a point. This is (trivially) the case when $\Gamma$ contains a   segment.   
\begin{corollary}\label{cor:main1}
Let $N \geq 4$, $\s\in (0,2)$ and   $\O$  be a   bounded domain of $\R^N$. Consider  $\G$ a smooth closed curve contained in $\O$.
Suppose that the curvature $\k$ of $\G$ vanishes at a point. Then for every $\l\in (-\l_1(\O),0)$, we have 
 $\mu_{\l}\left(\O,\G\right) < S_{N,\s},$ and $\mu_{\l}\left(\O,\G\right)$ is achieved by a positive function.  
\end{corollary}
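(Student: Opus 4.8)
The plan is to obtain Corollary \ref{cor:main1} as an immediate specialization of Theorem \ref{th:main1} to the constant function $h\equiv\l$. Two things must be verified before Theorem \ref{th:main1} can be invoked: that $-\D+\l$ is coercive, and that the pointwise condition \eqref{eq:h-bound-main-th-1} holds at some $y_0\in\G$.

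Coercivity first. For $\l\in(-\l_1(\O),0)$ we have $\l>-\l_1(\O)$, and, as already remarked in the introduction, this is exactly the range in which $-\D+\l$ is coercive on $H^1_0(\O)$. Concretely, the variational characterization of the first Dirichlet eigenvalue gives $\int_\O|\n u|^2\,dx\ge \l_1(\O)\int_\O u^2\,dx$, whence $\int_\O|\n u|^2\,dx+\l\int_\O u^2\,dx\ge\bigl(1+\l/\l_1(\O)\bigr)\int_\O|\n u|^2\,dx$ for all $u\in H^1_0(\O)$, and the factor $1+\l/\l_1(\O)$ is strictly positive precisely because $\l>-\l_1(\O)$. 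Hence the constant $C_{N,\s}$ produced by Theorem \ref{th:main1} is at our disposal, and the hypothesis that $h$ be continuous is trivially met by a constant.

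The second step is where the assumption on $\G$ enters. By hypothesis there is a point $y_0\in\G$ at which the curvature vector vanishes, $\k(y_0)=0$, so that the right-hand side of \eqref{eq:h-bound-main-th-1} equals $-C_{N,\s}|\k(y_0)|^2=0$. Since $h\equiv\l$ and $\l<0$, we get $h(y_0)=\l<0=-C_{N,\s}|\k(y_0)|^2$, that is, \eqref{eq:h-bound-main-th-1} is satisfied at $y_0$. Theorem \ref{th:main1} then yields both the strict inequality $\mu_\l(\O,\G)<S_{N,\s}$ and the existence of a positive function achieving $\mu_\l(\O,\G)$, which is the assertion.

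I do not expect any genuine obstacle here, since all the analytic work is carried by Theorem \ref{th:main1}. The only point worth isolating is conceptual rather than technical: vanishing curvature removes the curvature penalty $C_{N,\s}|\k(y_0)|^2$ in \eqref{eq:h-bound-main-th-1}, so that the requirement degenerates to the pure sign condition $\l<0$, which holds throughout the interval $(-\l_1(\O),0)$.
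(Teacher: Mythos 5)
Your proof is correct and is exactly the argument the paper intends: the corollary is a direct specialization of Theorem \ref{th:main1} to $h\equiv\l$, using that $\l>-\l_1(\O)$ gives coercivity (as the paper itself remarks just before the corollary) and that $\k(y_0)=0$ reduces condition \eqref{eq:h-bound-main-th-1} to $\l<0$. No discrepancy with the paper's (implicit) proof.
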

We observe that if  $\G=S^1_R$  a circle of radius $R>0$ and $h\equiv \l\in \R$ then condition \eqref{eq:h-bound-main-th-1} translates into 
$$
\l<-\frac{C_{N,\s}}{R^2}.
$$
Therefore, provided   $-\l_1(\O)<-\frac{C_{N,\s}}{R^2} $, we have that $\mu_{\l}(\O,S^1_R)$ is achieved for every $\l\in (-\l_1(\O),-\frac{C_{N,\s}}{R^2})$. One is thus led to find domains for   which  $-\l_1(\O)<-\frac{C_{N,\s}}{R^2} $. A particular example is given by the annulus  $\O_\e= B_{R+\e}\setminus B_{R-\e}$, which contains $S^1_R$ for $\e>0$. It is well known from e.g. the Faber-Krahn inequality that $\l_1(\O_\e)\geq \frac{c(N)}{\e^2} $,  so that  for sufficiently small $\e$, one always has  $-\l_1(\O_\e)<-\frac{C_{N,\s}}{R^2} $.\\
  
We now turn to the $3$-dimensional case.  We let   $G(x,y)$ be  the Dirichlet Green function of the operator $-\D +h$, with zero Dirichlet data. It satisfies 
\be\label{eq:Green-expan-introduction}
\begin{cases}
-\D_x G(x,y)+h(x) G(x,y)=0&  \qquad\textrm{  for every $x\in \O\setminus\{y\}$}\\
G(x,y)=0 &  \qquad\textrm{  for every $x\in\de  \O$.}
\end{cases}
\ee
In addition, for $N=3$, there exists a continuous function   $\textbf{m}:\O\to \R$ and a positive constant $c>0$ such that  
\be \label{eq:expans-Green}
 G (x,y)=\frac{c}{  |x- y|}+ c\, \textbf{m}(y)+o(1)  \qquad \textrm{ as $x \to y.$} 
\ee
We call the   function   $\textbf{m}:\O\to \R$  the    \textit{mass} of $-\D+h$ in $\O$.   We note that $-\textbf{m}$ is occasionally called the \textit{Robin function} of $-\D+h$ in the literature. We now state our second main result.
\begin{theorem}\label{th:main2}
Let $\s\in (0,2)$ and   $\O$  be a   bounded domain of $\R^3$. Consider  $\G$ a smooth closed curve contained in $\O$.
Let  $h$ be  a continuous function such that the linear operator $-\D+h$ is coercive. If $\textbf{m}(y_0)>0$, for some $y_0\in \G$, then 
 $\mu_{h}\left(\O,\G\right) < S_{3,\s},$ and $\mu_{h}\left(\O,\G\right)$ is achieved by a positive function.  
\end{theorem}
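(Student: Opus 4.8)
The plan is to establish the strict inequality $\mu_h(\O,\G) < S_{3,\s}$ and then obtain existence and positivity by standard means. Indeed, once the strict gap is known, a minimizing sequence $(u_n)$ for $\mu_h(\O,\G)$ cannot lose compactness: by the coercivity of $-\D+h$ the $H^1_0$-norms are controlled, and the only obstruction to strong convergence is the escape of energy into a bubble modelled on the extremal of $S_{3,\s}$ concentrating at a point of $\G$, which carries energy exactly $S_{3,\s}$. The strict inequality $\mu_h(\O,\G)<S_{3,\s}$ rules this out, so up to a subsequence $u_n\to u$ strongly and $u$ is a minimizer. Replacing $u$ by $|u|$ leaves the quotient unchanged, and the Euler--Lagrange equation $-\D u+hu=\mu_h(\O,\G)\,\rho_\G^{-\s}u^{2^*_\s-1}$ together with the strong maximum principle forces $u>0$. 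Thus everything reduces to exhibiting a test function $u_\e$ whose Rayleigh quotient is $<S_{3,\s}$ for some small $\e$.

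To build $u_\e$, fix $y_0\in\G$ with $\mathbf{m}(y_0)>0$ and work in Fermi coordinates adapted to $\G$ near $y_0$, in which $\G$ is the $t$-axis to first order and $\rho_\G(x)=|z|\,(1+O(|x|))$. Let $w$ be a minimizer for $S_{3,\s}$ with $k=1$, and at the three-dimensional scaling set $w_\e(x)=\e^{-1/2}w(x/\e)$, which preserves both the Dirichlet energy and the weighted norm. The essential feature of dimension three is the slow decay $w(y)=A|y|^{-1}(1+o(1))$ as $|y|\to\infty$, so that the tail of $w_\e$ equals $A\e^{1/2}|x-y_0|^{-1}$, i.e. the fundamental solution of $-\D$ rescaled by $A\e^{1/2}$. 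Matching this tail to the Green function of $-\D+h$, I would take the glued test function
\be
u_\e=\chi\,w_\e+(1-\chi)\,A\e^{1/2}c^{-1}\,G(\cdot,y_0),
\ee
where $\chi$ equals $1$ on $B_\delta(y_0)$ and is supported in $B_{2\delta}(y_0)$, and $c$ is the constant in \eqref{eq:expans-Green}. Since $G(\cdot,y_0)$ vanishes on $\de\O$ we have $u_\e\in H^1_0(\O)$; near $y_0$ it coincides with the concentrating bubble, while away from $y_0$ it is a multiple of the genuine Green function, whose expansion \eqref{eq:expans-Green} carries the regular part $c\,\mathbf{m}(y_0)$.

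The core of the argument is the energy expansion. The self-energy of $\chi w_\e$ reproduces $S_{3,\s}$ after accounting for the weighted normalization; the exterior piece $A\e^{1/2}c^{-1}G(\cdot,y_0)$ integrates against $-\D+h$ to a boundary contribution on $\{|x-y_0|=\delta\}$; and the cross terms in the gluing annulus, where $w_\e\sim A\e^{1/2}|x-y_0|^{-1}$ must be compared with $A\e^{1/2}c^{-1}G\sim A\e^{1/2}|x-y_0|^{-1}+A\e^{1/2}\mathbf{m}(y_0)$, produce precisely the regular part with a definite sign. The denominator concentrates on $\G$ near $y_0$, where the exterior correction is negligible, so it contributes only the normalizing constant plus higher-order errors. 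Collecting terms yields
\be
\frac{\int_\O|\n u_\e|^2+hu_\e^2}{\left(\int_\O\rho_\G^{-\s}|u_\e|^{2^*_\s}\right)^{2/2^*_\s}}=S_{3,\s}-\Lambda\,A^2\,\mathbf{m}(y_0)\,\e+o(\e),\qquad\Lambda>0,
\ee
so that $\mathbf{m}(y_0)>0$ gives $\mu_h(\O,\G)<S_{3,\s}$ for $\e$ small. A structural point I would verify is that the geometric corrections from the curvature of $\G$ and from the metric distortion of $\rho_\G$ in Fermi coordinates enter only at order $\e^2|\log\e|$ or higher, hence are absorbed into $o(\e)$; this is exactly why, in contrast with the case $N\ge4$ of Proposition \ref{Proposition2}, the curvature is invisible in dimension three and only the mass survives.

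The main obstacle is the order-$\e$ expansion itself. Because $w_\e$ decays only like $\e^{1/2}|x-y_0|^{-1}$, the tail sits precisely at the scale where the cutoff, the gluing mismatch, and the interaction with the regular part of $G$ all contribute at the same order $\e$, and none may be discarded. The delicate work is to match $w_\e$ to $A\e^{1/2}c^{-1}G(\cdot,y_0)$ on the intermediate annulus, to control the truncation errors uniformly in $\delta$ and $\e$, and to show that the surviving cross term produces \emph{exactly} $\Lambda A^2\mathbf{m}(y_0)\e$ with the correct sign while every remaining contribution—including those from the local value of $h$ and from the curvature of $\G$—is strictly of lower order. Handling these slowly-decaying tails and the attendant logarithmic thresholds is the crux of the proof.
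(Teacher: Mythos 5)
Your overall strategy coincides with the paper's: a Schoen-type test function obtained by gluing the rescaled bubble $\e^{-1/2}w(\cdot/\e)$ to a multiple of the Green function $G(\cdot,y_0)$, an order-$\e$ energy expansion in which the matching on the intermediate annulus produces the mass term, and the reduction of existence to the strict inequality $\mu_h(\O,\G)<S_{3,\s}$ via the standard compactness argument (Proposition \ref{Pro3333}). There is, however, one genuine gap. Your construction rests on the asserted asymptotic $w(y)=A|y|^{-1}(1+o(1))$ as $|y|\to\infty$ with a single constant $A$. This is not a known property of the minimizer $w$ of $S_{3,\s}$ for $\s\in(0,2)$: $w$ is not explicit, not radially symmetric, and not smooth across $\{z=0\}$, and the available decay information (Corollary \ref{cor:dec-est-w}) is only the two-sided bound $C_1(1+|y|)^{-1}\le w(y)\le C_2(1+|y|)^{-1}$, which does not imply that $|y|\,w(y)$ has a limit. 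Since your entire order-$\e$ computation --- the matching of the tail of $w_\e$ with $A\e^{1/2}c^{-1}G(\cdot,y_0)$ and the identification of the coefficient of $\textbf{m}(y_0)$ --- hinges on this precise asymptotic, the step as written is unjustified. The paper's repair is Lemma \ref{lem:v-to-cR}: using the $C^2_{\rm loc}$ bounds away from $\{z=0\}$, Arzel\'a--Ascoli, and B\^ocher's theorem, the rescalings $v_\e=\e^{-1}w(\cdot/\e)$ converge to $\textbf{c}/|x|$ along some sequence $\e_n\to0$, and the expansion is then carried out only along the discrete family $(\Psi_{\e_n})$, which suffices to conclude $\mu_h(\O,\G)<S_{3,\s}$. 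With this substitution your argument goes through.

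A secondary inaccuracy: the curvature and metric corrections are not of order $\e^2|\log\e|$. In dimension $3$ one has, for instance, $\e^2\int_{\B_{r/\e}}|z|^2|\n w|^2\,dx=O(\e r)$, i.e.\ genuinely of order $\e$ but with a constant vanishing as $r\to0$; these terms are eliminated by the iterated limit $\e\to0$ followed by $r\to0$ (the $\calO_r(\e)$ bookkeeping of Proposition \ref{Expansion-no-h}), not because they are $o(\e)$ for a fixed cutoff radius.
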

Since the mass $\textbf{m}$ is independent on the curve, Theorem \ref{th:main2} shows that the \textit{location} of the curve in the domain $\O$ --- so to intersect the positive part of $\textbf{m}$--- matters for the existence of solution in general. We note that there are situations in which  the mass is a everywhere positive.  This is the case four operator  $-\D+\l$, provided $\l\in \left(-\l_1(B_1),-\frac{1}{4}\l_1(B_1) \right) $, as  observed in Brezis-Nirenberg \cite{BN}. We therefore have the 
\begin{corollary}\label{cor:main2}
Let       $B_1$  the unit ball   of $\R^3$ and let $\G$ be any smooth closed curve contained in $B_1$. If  
$\l\in \left(-\l_1(B_1),-\frac{1}{4}\l_1(B_1) \right)$
then  $\mu_{\l}\left(\O,\G\right) < S_{3,\s},$ and $\mu_{\l}\left(\O,\G\right)$ is achieved by a positive function.  
\end{corollary}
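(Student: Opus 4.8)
The plan is to deduce Corollary~\ref{cor:main2} directly from Theorem~\ref{th:main2}, specialized to $\O=B_1$ and $h\equiv\l$. Two hypotheses of that theorem must be checked: coercivity of $-\D+\l$, and positivity of the mass $\textbf{m}$ of $-\D+\l$ on $B_1$ at a point of $\G$. Coercivity is immediate from the Poincar\'e inequality and the variational characterization of $\l_1(B_1)$: for $\l\in(-\l_1(B_1),0)$ one has, for all $u\in H^1_0(B_1)$,
\[
\int_{B_1}\big(|\n u|^2+\l u^2\big)\,dx\;\ge\;\frac{\l_1(B_1)+\l}{\l_1(B_1)}\int_{B_1}|\n u|^2\,dx,
\]
which is a positive multiple of the Dirichlet norm. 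Thus the standing assumptions of Theorem~\ref{th:main2} hold, and everything reduces to producing $y_0\in\G$ with $\textbf{m}(y_0)>0$.

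Since $\l<0$ in the stated range, set $\omega:=\sqrt{-\l}$, so $-\D+\l=-\D-\omega^2$; using $\l_1(B_1)=\pi^2$, the range $\l\in(-\l_1(B_1),-\tfrac14\l_1(B_1))$ becomes $\omega\in(\pi/2,\pi)$. The fundamental solution of $-\D-\omega^2$ in $\R^3$ is $\Phi(x-y)=\frac{\cos(\omega|x-y|)}{4\pi|x-y|}$, and expanding the cosine shows that its regular part vanishes at the singularity. Hence in the expansion \eqref{eq:expans-Green} one has $c=\tfrac1{4\pi}$ and $\textbf{m}(y)=-4\pi\,H(y,y)$, where $H(\cdot,y)$ solves $(-\D-\omega^2)H=0$ in $B_1$ with $H(\cdot,y)=\Phi(\cdot-y)$ on $\de B_1$. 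The threshold is pinned down by the computation at the center: by radial symmetry the solution regular at the origin is $H(x,0)=\frac{\cos\omega}{4\pi\sin\omega}\,\frac{\sin(\omega|x|)}{|x|}$, so $\textbf{m}(0)=-\omega\cot\omega$, which is strictly positive precisely for $\omega\in(\pi/2,\pi)$ and vanishes at $\omega=\pi/2$, i.e.\ at $\l=-\tfrac14\l_1(B_1)$.

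The main obstacle is upgrading positivity from the center to all of $B_1$, where rotational symmetry no longer collapses the problem to one radial mode. I would expand the Dirichlet Green function of $-\D-\omega^2$ on $B_1$ in spherical harmonics, so that each radial component solves a Bessel-type ODE on $(0,1)$; the regular part $H(y,y)$ is then a convergent series in $|y|$ whose sign must be controlled uniformly for $\omega$ in the admissible window. This is exactly the explicit $3$-dimensional ball computation of Brezis and Nirenberg, and I would invoke \cite{BN} for the conclusion that $\textbf{m}>0$ throughout $B_1$ for $\omega\in(\pi/2,\pi)$.

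Finally, granting $\textbf{m}>0$ on $B_1$, any smooth closed curve $\G\subset B_1$ has $\textbf{m}(y_0)>0$ at each of its points; choosing any $y_0\in\G$ and applying Theorem~\ref{th:main2} gives $\mu_{\l}(B_1,\G)<S_{3,\s}$ with the infimum attained by a positive function, which is precisely the assertion of the corollary.
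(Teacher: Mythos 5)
Your proposal is correct and follows essentially the same route as the paper: the corollary is obtained by combining Theorem \ref{th:main2} with the observation, credited to Brezis--Nirenberg \cite{BN}, that the mass of $-\D+\l$ on $B_1$ is everywhere positive for $\l\in\left(-\l_1(B_1),-\tfrac14\l_1(B_1)\right)$. Your added computations (coercivity via Poincar\'e, the explicit value $\textbf{m}(0)=-\omega\cot\omega$ identifying the threshold $\l=-\tfrac14\l_1(B_1)$) are correct supporting detail, but the crux --- positivity of $\textbf{m}$ throughout $B_1$ --- is delegated to \cite{BN} in both your argument and the paper's.
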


The effect    of curvatures in the study of   Hardy-Sobolev  inequalities have been intensively studied in the recent years.  For each of these works, the sign of the curvatures at the point of singularity plays important roles for the existence   a solution.  The first paper, to our knowledge, being the one  of Ghoussoub and Kang \cite{GK} who considered the Hardy-Sobolev inequality with singularity at the boundary.  For more  results in this direction,   see the works of    Ghoussoub and Robert in \cite{GR2,GR3,GR4, GR5}, Demyanov and Nazarov \cite{DN}, Chern and Lin \cite{CL}, Lin and Li \cite{LL}, the authors and Minlend in \cite{FMT} and the references there in. 
We point out that in the pure Hardy-Sobolev case, $\s\in (0,2)$,  with singularity at the boundary, one has existence of minimizers for every dimension $N\geq 3$ as long as the mean curvature of the boundary is negative at the point singularity, see \cite{GR51}.

The Hardy-Sobolev inequality with interior singularity on Riemannian manifolds have been studied by Jaber \cite{Jaber1} and Thiam \cite{Thiam1}. Here also the impact of the scalar curvature at the point singularity plays an important role for the existence of minimizers in higher dimensions $N\geq 4$.   The paper  \cite{Jaber1} contains also existence result under positive mass condition for $N=3$. \\

We expect that  the arguments in this paper can be generalized for the case $\G\subset \O$ a $k$-dimensional closed submanifold, with $2\leq k\leq N-2$. Here we believe that the norm of the second fundamental from of $\G$ will play a crucial role for the existence of minimizers. An other problem of interest would be the case  $\G\subset \de \O$ is a $k$-dimensional submanifold of $\de\O$ with, $1\leq k\leq N-1$. In this situation, we suspect that the sign if the mean curvature of $\de\O$ at a point might influence on the the existence of minimizers. Finally we    note that  Ghoussoub and Robert  in \cite{GR4} obtained several results for  the case $\G$ a subspace of dimension $k\geq 2$, and among other results, if $\G$ intersects $\de\O$ transversely, they  obtain existence results under some negativity assumptions on the mean curvature.

The proof of Theorem \ref{th:main1} and Theorem \ref{th:main2} rely on test function methods. Namely to build appropriate test functions allowing to compare $\mu_h(\O,\G)$ and $ S_{N,\s}$.  While it  always holds that $\mu_h(\O,\G)\leq S_{N,\s}$, our main task is  to find a function for which  $\mu_h(\O,\G)<S_{N,\s}$. This then allows to recover compactness and thus every minimizing sequence  for $ \mu_h(\O,\G)$ converges to   a minimizer. Building these approximates solutions requires to have sharp decay estimates of a  minimizer $w$ for $S_{N,\s}$, see Section \ref{s:Premlim}.   In Section \ref{s:Exits-N-geq4}, we treat the case $N=4$ in the spirit of Aubin \cite{Aubin}. Here  we find a  continuous familly  of test functions $(u_\e)_{\e>0}$ concentrating at a point $y_0\in \G$ which yields $\mu_h(\O,\G)<S_{N,\s}  $, as $\e\to 0$, provided \eqref{eq:h-bound-main-th-1} holds.   In Section \ref{s:3D-case}, we consider   the case $N=3$, which is  more difficult. Here we use the argument of Schoen \cite{Schoen} to build our test function.
 However we cannot adopt the method in \cite{Schoen} straightforwardly. In fact,  in contrast to the case $N\geq 4$,  we could only find a descrete family of test function $(\Psi_{\e_n})_{n\in \N}$ that leads to the inequality $\mu_h(\O,\G)<S_{3,\s}$. This is due to the fact that   the (flat) ground-state $w$ for $S_{3,\s}$, $\s\in (0,2)$, is not known explicitly,   it is not radially symmetric, it is not smooth, and $S_{3,\s}$ is only invariant under translations in the $t-$direction. As in \cite{Schoen}, we use some global test functions. These are
similar to the  test  functions $(u_{\e_n})_{n\in \N}$ in dimension $N\geq 4$ near the concentration point $y_0$, but away from it is  substituted  with the regular part of the  Green function $G(x,y_0)$, which makes appear the mass $\textbf{m}(y_0)$ in its first order Taylor expansion, see \eqref{eq:expans-Green}.    \\

\bigskip
\noindent
\textbf{Acknowledgement:}
This work is  supported by the Alexander von Humboldt Foundation and the German Academic Exchange Service (DAAD). Part of the paper was written while the authors visited the Institute of Mathematics of the Goethe-University Frankfurt. They wish to thank the institute for its hospitality and the  DAAD for funding  the visit of  E.H.A.T. within the program 57060778. M.M.F. is    partially supported by the ERC Advanced Grant 2013
n. 339958 “Complex Patterns for Strongly Interacting Dynamical Systems - COMPAT” and  E.H.A.T is partially supported by the  AIMS-NEI Small Research Grant.




%
\section{Geometric Preliminaries}\label{s:Geometric-prem}
Let    $\G\subset \R^N$ be  a smooth closed  curve. Let $(E_1;\dots; E_N)$ be an orthonormal basis of $\R^N$.
 For $y_0\in \G$ and $r>0$ small,  we consider the curve $\gamma:\left(-r, r\right) \to \G$,   parameterized by arclength such that $\gamma(0)=y_0$. Up to a translation and a rotation,  we may assume that $\g'(0)=E_1$.     We choose a smooth   orthonormal frame field $\left(E_2(t);...;E_N(t)\right)$ on the normal bundle of $\G$ such that $\left(\g'(t);E_2(t);...;E_N(t)\right) $ is an oriented basis of $\R^N$ for every $t\in (-r,r)$, with $E_i(0)=E_i$. \\
We fix the following notation, that will be used  a lot in the paper,
$$
 Q_r:=(-r,r)\times B_{\R^{N-1}}(0,r) ,
$$
where $B_{\R^k}(0,r)$ denotes the ball in $\R^k$ with radius $r$ centered at the origin.
 Provided $r>0$  small, the map $F_{y_0}: Q_r\to \O$, given by 
$$
 (t,z)\mapsto  F_{y_0}(t,z):= \gamma(t)+\sum_{i=2}^N z_i E_i(t),
$$
is smooth and parameterizes a neighborhood of $y_0=F_{y_0}(0,0)$. We consider $\rho_\G:\G\to \R$ the distance function to the curve given by 
$$
\rho_\G(y)=\min_{\ov y\in \R^N}|y-\ov y|.
$$
In the above coordinates, we have 
\begin{equation}\label{eq:rho_Gamm-is-mod-z}
\rho_\G\left(F_{y_0}(x)\right)=|z| \qquad\textrm{ for every $x=(t,z)\in Q_r.$} 
\end{equation}
Clearly, for every $t\in (-r,r)$ and $i=2,\dots N$, there  are real numbers $\k_i(t)$ and $\tau^j_i(t)$  such that
\begin{equation}\label{DerivE_i}
E_i^\prime(t)= \kappa_i(t) \gamma^\prime(t)+\sum_{ {j=2} }^N\tau_i^j(t) E_j(t).
\end{equation}
The quantity  $\kappa_i(t)$ is the curvature in the $E_i(t)$-direction while $\tau_i^j(t)$ is the torsion from the osculating plane spanned by $\{\g'(t); E_j(t)\}$ in the direction  $E_j $.  We note that provided $r>0$ small, $\k_i$ and $\t_i^j$ are smooth functions on $(-r,r)$. Moreover, it is easy to see that 
\be\label{eq:tau-antisymm}
\t^j_i(t)=-\t^i_j(t) \qquad\textrm{ for $i,j=2,\dots, N$.   } 
\ee
Next, we derive the expansion of the metric induced by the parameterization $F_{y_0}$ defined above.
For $x=(t,z) \in Q_r$, we define 
$$
g_{11}(x)=   {\de_t F_{y_0}}(x) \cdot   {\de_t F_{y_0}} (x) , \qquad g_{1i}(x)=   {\de_t F_{y_0}} (x)\cdot   {\de_{z_i} F_{y_0}}(x) ,\qquad  g_{ij}(x)=   {\de_{z_j} F_{y_0}}(x) \cdot   {\de_{z_i} F_{y_0}}(x).
$$
We have the following result.
\begin{lemma}\label{MaMetric}
There exits $r>0$, only depending on $\G$ and $N$, such that  for ever $x=(t,z)\in Q_r$ 
\begin{equation}\label{Vert}
\begin{cases}
\displaystyle g_{11}(x)=1+2\sum_{i=2}^Nz_i \kappa_i(0)+2t\sum_{i=2}^Nz_i \kappa^\prime_i(0)+\sum_{ij=2}^N z_i z_j \kappa_i(0) \kappa_j(0)+\sum_{ij=2}^N z_i z_j  \b_{ij}(0)+O\left(|x|^3\right)\\
\displaystyle g_{1i}(x)=\sum_{j=2}^N z_j \tau^i_j(0)+t\sum_{j=2}^N z_j \left(\tau^i_j\right)^\prime(0)+O\left(|x|^3\right)\\
\displaystyle g_{ij}(x)=\d_{ij},
\end{cases}
\end{equation}
where  
$$
\b_{ij}(t):=\sum_{l=2}^N \tau_i^l(t) \tau_j^l(t).
$$
\end{lemma}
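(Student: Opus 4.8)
The plan is a direct computation of the induced metric from the definition of $F_{y_0}$, using only the structure equations \eqref{DerivE_i} together with the orthonormality of the moving frame $(\gamma'(t),E_2(t),\dots,E_N(t))$. First I would record the two partial derivatives
\begin{equation*}
\de_t F_{y_0}(t,z) = \gamma'(t) + \sum_{i=2}^N z_i E_i'(t), \qquad \de_{z_i} F_{y_0}(t,z) = E_i(t).
\end{equation*}
From this the normal block is immediate: since the frame is orthonormal for each fixed $t$, one has $\de_{z_i}F_{y_0}\cdot \de_{z_j}F_{y_0} = E_i(t)\cdot E_j(t) = \delta_{ij}$, which is the last line of \eqref{Vert} and requires no expansion at all.

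Next I would compute $g_{11}$ and $g_{1i}$ in closed form \emph{before} expanding in $t$. Substituting \eqref{DerivE_i} and using orthonormality gives the exact scalar products $\gamma'(t)\cdot\gamma'(t)=1$, $\gamma'(t)\cdot E_i(t)=0$, $\gamma'(t)\cdot E_i'(t)=\kappa_i(t)$, $E_j'(t)\cdot E_i(t)=\tau_j^i(t)$, and, recalling $\beta_{ij}(t)=\sum_l \tau_i^l(t)\tau_j^l(t)$,
\begin{equation*}
E_i'(t)\cdot E_j'(t) = \kappa_i(t)\kappa_j(t) + \beta_{ij}(t).
\end{equation*}
Expanding the products in the definitions then yields the exact identities
\begin{equation*}
g_{11}(t,z) = 1 + 2\sum_{i=2}^N z_i \kappa_i(t) + \sum_{i,j=2}^N z_i z_j \bigl(\kappa_i(t)\kappa_j(t)+\beta_{ij}(t)\bigr), \qquad g_{1i}(t,z) = \sum_{j=2}^N z_j \tau_j^i(t),
\end{equation*}
both of which are polynomials of degree at most two in $z$ whose coefficients are smooth functions of $t$.

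The remaining step is a one-variable Taylor expansion in $t$ about $t=0$. I would write $\kappa_i(t)=\kappa_i(0)+t\kappa_i'(0)+O(t^2)$ and $\tau_j^i(t)=\tau_j^i(0)+t(\tau_j^i)'(0)+O(t^2)$, and retain $\kappa_i(t)\kappa_j(t)$ and $\beta_{ij}(t)$ only at $t=0$. The one point needing care is the degree counting that certifies the error: every discarded monomial carries at least three factors among $t,z_2,\dots,z_N$, since the second-order remainder in the linear-in-$z$ part of $g_{11}$ is $O(t^2|z|)$, the $t$-correction to its quadratic-in-$z$ part is $O(t|z|^2)$, and the remainder in $g_{1i}$ is $O(t^2|z|)$; each of these is $O(|x|^3)$ with $x=(t,z)$. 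Collecting the surviving terms reproduces \eqref{Vert} verbatim. The radius $r$ is constrained only by requiring $F_{y_0}$ to be an immersion, so that the $g$'s really are the metric coefficients, and by requiring $t\mapsto(E_i(t),\kappa_i(t),\tau_i^j(t))$ to be smooth on $[-r,r]$; this makes the implied constants in the $O(|x|^3)$ terms uniform and dependent only on $\G$ and $N$. There is no genuine analytic obstacle here — the whole content is that $g_{11}$ and $g_{1i}$ are exactly polynomial in $z$, so that only the elementary expansion in $t$ and the bookkeeping of neglected monomials remain.
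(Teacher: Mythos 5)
Your proof is correct and follows essentially the same route as the paper: compute $\de_t F_{y_0}$ and $\de_{z_i}F_{y_0}$, use the structure equations \eqref{DerivE_i} and orthonormality of the frame to get the exact (polynomial in $z$) expressions for $g_{11}$, $g_{1i}$, $g_{ij}$, and then Taylor expand the coefficients $\kappa_i(t)$, $\tau_j^i(t)$ at $t=0$ with the stated degree bookkeeping. The only difference is presentational: you make the intermediate scalar products and the uniformity of the $O(|x|^3)$ constants explicit, which the paper leaves implicit.
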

\begin{proof}
To alleviate the notations, we will write $F=F_{y_0}$.
We have 
\begin{equation}\label{Derivatives}
 {\de_t F}(x) =\gamma^\prime(t)+\sum_{j=2}^N z_j E_j^\prime(t)
\qquad \textrm{and}\qquad
 {\de_{z_i} F}(x) = E_i(t).
\end{equation}
Therefore
\begin{equation}\label{gij}
g_{ij}(x)=E_i(t)\cdot E_j(t)=\d_{ij}.
\end{equation}
By \eqref{DerivE_i} and \eqref{Derivatives}, we have
\begin{equation}\label{g_i1}
g_{1i}(x) = \sum_{l=2}^N z_l E_l^\prime(t)\cdot E_i(t)  =\sum_{j=2}^N z_j \tau_j^i(t)
\end{equation}
and
\begin{equation}\label{g_11}
g_{11}(x)= {\de_t F}(x) \cdot {\de_t F}(x) =1+2\sum_{i=2}^N z_i \kappa_i(t)+\sum_{ij=2}^N  z_i z_j \kappa_i(t)\kappa_j(t)
+\sum_{ij=2}^N z_i z_j \left(\sum_{l=2}^N \tau_i^l(t) \tau_j^l(t)\right).
\end{equation}
By   Taylor expansions, we get
$$
\kappa_i(t)=\kappa_i(0)+t\kappa_i^\prime(0)+O\left(t^2\right) 
\qquad \textrm{and}\qquad
\tau_i^k(t)=\tau_i^k(0)+t \left(\tau_i^k\right)^\prime(0)+O\left(t^2\right).
$$
Using these identities in      \eqref{g_11} and \eqref{g_i1},   we get \eqref{Vert}, thanks to   \eqref{gij}. This ends the proof of the lemma.
\end{proof}
As a consequence we have the following result.
\begin{lemma}\label{MaMetricMetric}
There exists $r>0$ only depending on $\G$ and $N$, such that for every $x\in Q_r$, we have 
\begin{equation}\label{DeterminantMetric}
\sqrt{|g|}(x)=1+\sum_{i=2}^N z_i \kappa_i(0)+t\sum_{i=2}^N z_i \kappa_i^\prime(0)+\frac{1}{2}\sum_{ij=2}^N z_i z_j \kappa_i(0) \kappa_j(0)+O\left(|x|^3\right),
\end{equation}
where  $|g|$ stands for the determinant of $g$.
Moreover  $g^{-1}(x)$, the matrix  inverse   of $g(x)$,   has components given by
\begin{equation}\label{InverseMetric}
\begin{cases}
\displaystyle g^{11}(x)=1-2\sum_{i=2}^N z_i \kappa_i(0)-2t\sum_{i=2}^N z_i \kappa_i^\prime(0)+3\sum_{ij=2}^N z_i z_j \kappa_i(0) \kappa_j(0)+O\left(|x|^3\right)\\
\displaystyle g^{i1}(x)=-\sum_{j=2}^N z_j \tau^i_j(0)-t\sum_{j=2}^N z_j \left(\tau^i_j\right)^\prime(0)+2\sum_{j=2}^N z_l z_j \kappa_l(0) \tau^i_j(0)+O\left(|x|^3\right)\\
\displaystyle g^{ij}(x)=\d_{ij}+\sum_{lm=2}^N z_l z_m \tau^j_l(0) \tau^i_m(0)+O\left(|x|^3\right).
\end{cases}
\end{equation}
 
\end{lemma}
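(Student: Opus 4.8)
The plan is to treat $g(x)=I_N+A(x)$ as a small perturbation of the identity and to read off \eqref{DeterminantMetric} and \eqref{InverseMetric} from the standard Neumann and trace expansions, keeping every term up to order $|x|^2$ and discarding $O(|x|^3)$. The structural input from Lemma \ref{MaMetric} that makes this tractable is that the normal block is exactly Euclidean, $g_{ij}(x)=\delta_{ij}$ for $i,j\ge 2$, so that
\[
A(x)=\begin{pmatrix} \alpha & b^\top \\ b & 0\end{pmatrix},\qquad \alpha=g_{11}-1=O(|x|),\quad b_i=g_{1i}=O(|x|),
\]
with $A$ itself of size $O(|x|)$ on $Q_r$. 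All nontrivial contributions then come from the first row and column.

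First I would record the determinant. Since the bottom--right block is the identity, a single Schur complement (equivalently, one step of row reduction eliminating the first row against the identity block) gives $\det g=g_{11}-\sum_{i=2}^N g_{1i}^2=g_{11}-|b|^2$. Inserting the expansion of $g_{1i}$ from \eqref{Vert} and recalling $\beta_{ij}(0)=\sum_{l}\tau_i^l(0)\tau_j^l(0)$, one finds $|b|^2=\sum_{ij}z_iz_j\beta_{ij}(0)+O(|x|^3)$, because the cross terms carrying a factor $t$ are already $O(|x|^3)$. The crucial point is that the torsion contribution $\sum_{ij}z_iz_j\beta_{ij}(0)$ sitting inside $g_{11}$ cancels exactly against $-|b|^2$, leaving
\[
\det g=1+2\sum_i z_i\kappa_i(0)+2t\sum_i z_i\kappa_i'(0)+\sum_{ij}z_iz_j\kappa_i(0)\kappa_j(0)+O(|x|^3).
\]
Expanding the square root via $\sqrt{1+v}=1+\tfrac12 v-\tfrac18 v^2+O(v^3)$, with $v$ the $O(|x|)$ remainder above, then produces \eqref{DeterminantMetric}; here the one subtlety is to combine correctly the quadratic $\kappa\kappa$ contribution coming from $\tfrac12 v$ with the one produced by $-\tfrac18 v^2=-\tfrac18\big(2\sum_i z_i\kappa_i(0)\big)^2+O(|x|^3)$.

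For the inverse I would use the Neumann series $g^{-1}=(I+A)^{-1}=I-A+A^2+O(|x|^3)$, valid once $r$ is small enough that $\|A\|<1$ on $Q_r$. The block form makes the powers immediate: $(A^2)_{11}=\alpha^2+|b|^2$, $(A^2)_{1i}=(A^2)_{i1}=\alpha b_i$, and $(A^2)_{ij}=b_ib_j$, while every entry of $A^3$ is $O(|x|^3)$. Reading off the three component types and inserting \eqref{Vert} gives the stated expansions: in $g^{11}=1-\alpha+\alpha^2+|b|^2+O(|x|^3)$ the torsion terms cancel again (the $+|b|^2$ annihilates the $\beta$ term hidden in $-\alpha$), while $\alpha^2=4\sum_{ij}z_iz_j\kappa_i(0)\kappa_j(0)+O(|x|^3)$ combines with $-\alpha$ to yield the coefficient $3$; in $g^{i1}=-b_i+\alpha b_i+O(|x|^3)$ the product $\alpha b_i$ supplies the curvature--torsion term $2\sum_{lj}z_lz_j\kappa_l(0)\tau_j^i(0)$; and in $g^{ij}=\delta_{ij}+b_ib_j+O(|x|^3)$ the product $b_ib_j$ gives the pure torsion term $\sum_{lm}z_lz_m\tau_l^i(0)\tau_m^j(0)$, which coincides with the symmetric expression in \eqref{InverseMetric} after relabelling the summation indices.

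The computation is mechanical once the block structure is isolated, so the difficulty is bookkeeping rather than conceptual. The point requiring the most care is the consistent separation of scales: $\alpha$ and $b$ are both $O(|x|)$, but their quadratic parts enter differently, since the $t$--linear pieces of $b$ contribute only at order $|x|^3$ whereas the $z$--quadratic pieces survive, and one must discard every $O(|x|^3)$ term uniformly. The second delicate feature, which I would verify explicitly, is the fate of the torsion block: the $\beta_{ij}$ terms disappear from both $\det g$ and $g^{11}$ but persist in $g^{ij}$, and all of this is controlled by the single identity $|b|^2=\sum_{ij}z_iz_j\beta_{ij}(0)+O(|x|^3)$.
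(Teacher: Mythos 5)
Your treatment of the inverse metric is correct and is essentially the paper's own argument in a slightly cleaner packaging: the paper likewise expands $g^{-1}=id-A-B+A^{2}+O(|x|^{3})$ and reads off the three blocks, and your identities $(A^{2})_{11}=\alpha^{2}+|b|^{2}$, $(A^{2})_{1i}=\alpha b_{i}$, $(A^{2})_{ij}=b_{i}b_{j}$ reproduce \eqref{InverseMetric} exactly, including the cancellation of the $\beta_{ij}$ terms in $g^{11}$ and the coefficient $3$ from $-\alpha+\alpha^{2}$.

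The determinant step, however, contains a genuine gap: the computation you set up does \emph{not} produce \eqref{DeterminantMetric}, and you assert that it does without carrying the arithmetic through. Your Schur identity $\det g=g_{11}-|b|^{2}$ is exact, and together with \eqref{g_11} and \eqref{g_i1} it yields the exact factorization
\begin{equation*}
\det g=\Bigl(1+\sum_{i=2}^{N}z_{i}\kappa_{i}(t)\Bigr)^{2},
\qquad\text{hence}\qquad
\sqrt{|g|}=1+\sum_{i=2}^{N}z_{i}\kappa_{i}(0)+t\sum_{i=2}^{N}z_{i}\kappa_{i}^{\prime}(0)+O\left(|x|^{3}\right),
\end{equation*}
with \emph{no} quadratic term $\tfrac12\sum_{ij}z_{i}z_{j}\kappa_{i}(0)\kappa_{j}(0)$. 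Equivalently, in your own expansion the contribution $+\tfrac12\sum_{ij}z_{i}z_{j}\kappa_{i}(0)\kappa_{j}(0)$ coming from $\tfrac12 v$ is cancelled exactly by $-\tfrac18v^{2}=-\tfrac12\sum_{ij}z_{i}z_{j}\kappa_{i}(0)\kappa_{j}(0)+O(|x|^{3})$, so the coefficient that \eqref{DeterminantMetric} claims to be $\tfrac12$ comes out as $0$. This is not a defect of your method but of the target statement: the paper's proof uses, in \eqref{Exp}, the expansion $\sqrt{\det(I+H)}=1+\frac{\tr H}{2}+\frac{(\tr H)^{2}}{4}-\frac{\tr (H^2)}{4}+O(|H|^{3})$, whereas the correct coefficient of $(\tr H)^{2}$ is $\tfrac18$, and that slip is exactly what generates the spurious quadratic term. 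So your route, followed to the end, corrects \eqref{DeterminantMetric} rather than proving it; as written, your proof claims a conclusion that contradicts its own intermediate identity. If you keep this approach you should state the corrected expansion and note that the change propagates into Lemma \ref{lem:to-prove} (where the $\tfrac12\sum z_{i}z_{j}\kappa_{i}\kappa_{j}$ term is responsible for the summand $\frac{|\kappa(y_0)|^{2}}{2(N-1)}\int|z|^{2}|\n v|^{2}\,dx$) and hence into the constant $A_{N,\s}$ of Proposition \ref{Proposition2}.
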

\begin{proof}
We write
$$
g(x)=id+H(x),
$$
where $id$ denotes the  identity matrix on $\R^N$  and $H $ is a symmetric matrix with components $H_{\a\b}$, for $\a,\b=1,\dots,N$, given  by
\be \label{eq:g-eq-id-H}
\begin{cases}
\displaystyle H_{11}(x)=2\sum_{i=2}^N z_i \kappa_i(0)+2t\sum_{i=2}^N z_i \kappa^\prime_i(0) +\sum_{ij=2}^N z_i z_j \kappa_i(0) \kappa_j(0)+\sum_{ij=2}^N z_i z_j \b_{ij}(0)+O\left(|x|^3\right)\\
\displaystyle H_{1i}(x)= \sum_{j=2}^N z_i \tau^i_j(0)+O\left(|x|^2\right)\\
H_{ij}(x)=0.
\end{cases}
\ee
We recall that as $|H| \to0$,
\begin{equation}\label{Exp}
\sqrt{|g|}=\sqrt{\det\left(I+H\right)}=1+\frac{\tr H}{2}+\frac{\left(\tr H\right)^2}{4}-\frac{\tr (H^2)}{4}+O\left(|H|^3\right).
\end{equation}
Now  by \eqref{eq:g-eq-id-H},  as $|x|\to 0$, we have
\be \label{eq:trH-ov-2}
\frac{\tr H}{2}=\sum_{i=2}^Nz_i \kappa_i(0)+t\sum_{i=2}^Nz_i \kappa^\prime_i(0)+\frac{1}{2}\sum_{ij=2}^N z_i z_j \kappa_i(0) \kappa_j(0)+\frac{1}{2}\sum_{ij=2}^N z_i z_j  \b_{ij}(0)+O\left(|x|^3\right),
\ee
so that  
\begin{equation}\label{l2}
\frac{\left(\tr H\right)^2}{4}=\sum_{ij=2}^N z_i z_j \kappa_i(0) \kappa_j(0)+O\left(|x|^3\right).
\end{equation}
Moreover, from \eqref{eq:g-eq-id-H}, we deduce that
$$
\tr (H^2)(x)=\sum_{\a=1}^N \left(H^2(x)\right)_{\a\a}=\sum_{\a\b=1}^N H_{\a \b}(x) H_{\b\a}(x) =\sum_{\a\b=1}^N  H^2_{\a \b}(x) =H^2_{11}(x)+2\sum_{i=2}^N H^2_{i1}(x),
$$
so that
\begin{equation}\label{l3}
-\frac{\tr (H^2)}{4}=-\sum_{ij=2}^N z_i z_j \kappa_i(0) \kappa_j(0)-\frac{1}{2} \sum_{ijl=2}^N z_i z_j \tau^l_i(0) \tau^l_j(0)+O\left(|x|^3\right).
\end{equation}
Therefore plugging the expression from \eqref{eq:trH-ov-2},    \eqref{l2} and \eqref{l3} in \eqref{Exp},   we get
$$
\sqrt{|g|}(x)=1+\sum_{i=2}^N z_i \kappa_i(0)+t\sum_{i=2}^N z_i \kappa_i^\prime(0)+\frac{1}{2}\sum_{ij=2}^N z_i z_j \kappa_i(0) \kappa_j(0)+O\left(|x|^3\right).
$$
The proof of \eqref{DeterminantMetric} is thus finished.\\

By Lemma \ref{MaMetric} we can write 
$$
g(x)=id+A(x)+B(x)+O\left(|x|^3\right),
$$
where   $A$ and $B$ are symmetric matrix with  components $(A_{\a\b})$ and  $(A_{\a\b})$, $\a,\b=1,\dots,N$, given respectively by  
\begin{equation}\label{eq:HerA}
\displaystyle A_{11}(x)=2\sum_{i=2}^N z_i \kappa_i(0),\qquad A_{i1}(x)=\sum_{j=2}^N z_j \tau^i_j(0) \qquad \textrm{and} \qquad A_{ij}(x)=0 
\end{equation}
and 
\begin{equation}\label{eq:HerB}
\begin{cases}
\displaystyle  B_{11}(x)=2t\sum_{i=2}^N z_i \kappa^\prime(0)+\sum_{i=2}^N z_i z_j\kappa_i(0)\kappa_j(0)+\sum_{ij=2}^N z_i z_j \b_{ij}(0)\\
\displaystyle B_{i1}(x)=t\sum_{j=2} z_j \left(\tau^i_j\right)^\prime(0) \qquad\textrm{and} \qquad B_{ij}(x)=0.
\end{cases}
\end{equation}
We observe that, as $|x|\to 0$,  
$$
g^{-1}(x)=id-A(x)-B(x)+A^2(x)+O\left(|x|^3\right).
$$
%
We then deduce from \eqref{eq:HerA} and \eqref{eq:HerB} that 
\begin{align*}
g^{11}(x)
&\displaystyle=1-A_{11}(x)-B_{11}(x)+A_{11}^2(x)+\sum_{i=1}^N A_{1i}^2(x)+O\left(|x|^3\right) \nonumber\\
&\displaystyle=1-2\sum_{i=2}^N z_i \kappa_i(0)-2t\sum_{i=2}^N z_i \kappa^\prime(0)+3\sum_{i=2}^N z_i z_j\kappa_i(0)\kappa_j(0)+3\sum_{ij=2}^N z_i z_j \b_{ij}(0)+O\left(|x|^3\right),
\end{align*}

\begin{align*}
g^{i1}(x)&\displaystyle=-A_{1i}(x)-B_{1i}(x)+\sum_{\a=1}^N A_{i\a} A_{1\a}+O\left(|x|^3\right)\hspace{8cm} \nonumber\\
&\displaystyle=-A_{1i}(x)-B_{1i}(x)+A_{i1}(x) A_{11}(x)+\sum_{j=2}^N A_{ij}(x) A_{1j}(x)+O\left(|x|^3\right) \nonumber\\
&\displaystyle=-\sum_{j=2}^N z_j \tau^i_j(0)-t\sum_{j=2} z_j \left(\tau^i_j\right)^\prime(0)+2\sum_{jl=2}^N z_l z_j \kappa_l(0) \tau^i_j(0)
\end{align*}

and 
\begin{align*}
g^{ij}(x)&\displaystyle=\d_{ij}-A_{ij}(x)-B_{ij}(x)+\left(A^2\right)_{ij}(x)+O\left(|x|^3\right) \hspace{8cm}  \nonumber\\
&\displaystyle=\d_{ij}-A_{ij}(x)-B_{ij}(x)+A_{1i} A_{1j}+\sum_{l=2}^N A_{il}(x) A_{jl}(x)+O\left(|x|^3\right) \nonumber\\
&\displaystyle =\d_{ij}+\sum_{{lm=2} }^N z_l z_m \tau^i_m(0) \tau^j_l(0)+O\left(|x|^3\right).
\end{align*}
This ends the proof.
\end{proof}
\section{Some preliminary results}\label{s:Premlim}
We consider  the  best constant for the cylindrical Hardy-Sobolev inequality   
$$
S_{N,\s}= \min \left\{ \int_{\R^N} |\n w|^2 dx\,:\,w\in \cD^{1,2}(\R^N),\,    \int_{\R^N} |z|^{-\s} |w|^{2^*_\s} dx=1 \right\}.
$$
As mentioned in the first section,  it is   attained  by a positive function $w\in \cD^{1,2}(\R^N)$, satisfying
\begin{equation}\label{ExpoEA}
-\D w=S_{N,\s} |z|^{-\s} w^{2^*_\s-1} \qquad \textrm{ in } \R^N,
\end{equation}
see e.g. \cite{BT}. Moreover from  \cite{FMS}, we have   
\begin{equation}\label{eq:AE}
w(x)=w(t,z)=\theta\left(|t|, |z|\right) \qquad \textrm{ for a function} \qquad \theta:\R_+ \times \R_+ \to \R_+.
\end{equation}
Next we prove further decay properties of $w$ involving its higher derivatives.  We start with the following results.
\begin{lemma}\label{lem:dec-est-theta}
Let $\th$ be given by \eqref{eq:AE}. Then we have    the following properties.
\begin{itemize}
\item[(i)] The function $t\mapsto \th(t,\rho)$ is of class $C^{\infty}$ with all its derivatives uniformly bounded with respect to  $\rho$. 
\item[(ii)] There exists a constant $C>0$ such that for $|(t,\rho)|\leq 1$, we have 
$$
\th_\rho(t,\rho)+ \th_{t\rho}(t,\rho) +\rho \th_{\rho \rho}(t,\rho) \leq C \rho^{1-\s} .
$$
\end{itemize} 
\end{lemma}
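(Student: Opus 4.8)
The plan is to exploit the two symmetries of equation \eqref{ExpoEA}: it is invariant under translations in the $t$-variable, and it is radial in the $z$-variable. Writing $\rho=|z|$ and using the profile form \eqref{eq:AE}, the first step is to record that $\th$ solves, for $t,\rho>0$,
\[
-\th_{tt}-\th_{\rho\rho}-\frac{N-2}{\rho}\,\th_\rho=S_{N,\s}\,\rho^{-\s}\,\th^{2^*_\s-1},
\]
and that $w\in L^\infty(\R^N)$ with $w(x)\to0$ as $|x|\to\infty$ (so in particular $\th$ is bounded); the boundedness and decay are part of the information on $w$ provided by \cite{FMS}. Away from $\{z=0\}$ the right-hand side is smooth, so by interior elliptic regularity $w$ is smooth there, and the whole difficulty is to reach the singular set.

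To prove (i) I would differentiate the equation in $t$. Since $w(\cdot+h,\cdot)$ solves the same equation, the difference quotient $D^t_hw=h^{-1}\big(w(t+h,z)-w(t,z)\big)$ satisfies the linear equation $-\D(D^t_hw)=V_h\,D^t_hw$, where $|V_h(x)|\le C|z|^{-\s}$ uniformly in $h$: indeed $s\mapsto s^{2^*_\s-1}$ is locally Lipschitz because $2^*_\s-1\ge1$ (which uses $\s\le2$), and $w$ is bounded. As $\s<2$, the weight $|z|^{-\s}$ is a subcritical Hardy potential, so a Moser iteration adapted to the Hardy-Sobolev operator gives local $L^\infty$ bounds on $D^t_hw$ that are uniform in $h$ and under $t$-translations; together with the decay of $w$ at infinity this bounds $\partial_t w$ in $L^\infty$. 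Iterating on the equations successively satisfied by $\partial_t^m w$ --- each obtained by differentiating in $t$, hence again of the form $-\D(\partial^m_tw)=(\text{bounded})\cdot|z|^{-\s}\cdot(\text{lower }t\text{-derivatives of }w)$ --- yields uniform bounds on every $\partial_t^m w$, i.e. all $t$-derivatives of $\th$ are bounded uniformly in $\rho$. Smoothness of $t\mapsto\th(t,\rho)$ up to $t=0$ then follows from the evenness of $w$ in $t$.

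For (ii) I would integrate the radial form of the equation. Writing it as $\partial_\rho\big(\rho^{N-2}\th_\rho\big)=\rho^{N-2}g$ with $g:=-S_{N,\s}\rho^{-\s}\th^{2^*_\s-1}-\th_{tt}$, the boundedness of $\th$ and of $\th_{tt}$ (the latter from (i)) gives $|g(t,\rho)|\le C\rho^{-\s}$ for $\rho\le1$. Boundedness of $\th$ forces $\rho^{N-2}\th_\rho\to0$ as $\rho\to0^+$, since otherwise $\th$ would blow up like $\rho^{-(N-3)}$ (or $\log\rho$ when $N=3$); hence
\[
\rho^{N-2}\th_\rho(t,\rho)=\int_0^\rho s^{N-2}g(t,s)\,ds,
\]
and the convergent integral $\int_0^\rho s^{N-2-\s}\,ds$ (convergent because $\s<N-1$) yields $|\th_\rho|\le C\rho^{1-\s}$. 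Differentiating this identity in $t$ and using the bounds on $\th_t$ and $\th_{ttt}$ from (i) gives $|\th_{t\rho}|\le C\rho^{1-\s}$ in exactly the same way. Finally, reading $\rho\,\th_{\rho\rho}=\rho\,g-(N-2)\th_\rho$ off the equation and inserting $|\rho\,g|\le C\rho^{1-\s}$ together with the bound just obtained on $\th_\rho$ gives $|\rho\,\th_{\rho\rho}|\le C\rho^{1-\s}$, which with the previous two estimates proves (ii).

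The step I expect to be the real obstacle is (i): obtaining derivative bounds that remain uniform up to the singular set $\{z=0\}$, where the Hardy weight degenerates the standard elliptic regularity and where $w$ is genuinely not smooth in $z$. The point that makes the $t$-derivatives behave well is that both $-\D$ and the weight $|z|^{-\s}$ are invariant under translations in $t$, so differentiating in $t$ introduces no new singular factor; the subcriticality $\s<2$ is what keeps $|z|^{-\s}$ in the range where the Moser-type iteration still closes and delivers the uniform $L^\infty$ control.
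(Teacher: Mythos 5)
Your part (ii) is essentially the paper's own argument: write the equation in the radial variable $\rho=|z|$, integrate $\partial_\rho\bigl(\rho^{N-2}\th_\rho\bigr)=\rho^{N-2}g$ from $0$ to $\rho$ using the boundedness of $\th$ and $\th_{tt}$ from (i) to get $|\th_\rho|\le C\rho^{1-\s}$, differentiate the integrated identity in $t$ to handle $\th_{t\rho}$, and then read $\th_{\rho\rho}$ off the equation. The paper does exactly this (it keeps the two contributions $C\rho$ and $C\rho^{1-\s}$ separate before absorbing the first into the second for $\rho\le1$, and concludes $|\th_{\rho\rho}|\le C\rho^{-\s}$, equivalent to your $|\rho\,\th_{\rho\rho}|\le C\rho^{1-\s}$). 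You are in fact slightly more careful than the paper on one point: you justify that $\rho^{N-2}\th_\rho\to0$ as $\rho\to0^+$, which the paper uses tacitly when writing the integrated formula.

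The divergence is in part (i), which the paper does not prove at all: it simply cites \cite{FMS}. Your difference-quotient scheme in $t$ is the natural self-contained route, and the structural observations (translation invariance in $t$, local Lipschitz character of $s\mapsto s^{2^*_\s-1}$ since $2^*_\s\ge2$) are correct. The one step that is genuinely nontrivial and that you pass over is the assertion that ``a Moser iteration adapted to the Hardy--Sobolev operator'' gives uniform local $L^\infty$ bounds for $-\D u=Vu$ with $|V|\le C|z|^{-\s}$. Note that $|z|^{-\s}\in L^q_{loc}(\R^N)$ only for $q<(N-1)/\s$, and $(N-1)/\s>N/2$ fails precisely when $\s\ge 2-\frac{2}{N}$; so for $\s$ close to $2$ the potential is \emph{not} in $L^{q}_{loc}$ with $q>N/2$ and the standard De Giorgi--Nash--Moser local boundedness theorem does not apply. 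One must run a Brezis--Kato type iteration in the weighted spaces furnished by the Hardy--Sobolev inequality itself, and for the higher $t$-derivatives one must also control the terms $w^{2^*_\s-1-k}$ (with possibly negative exponent) against the decay of $w$ at infinity. None of this is fatal --- it is exactly the content of the regularity analysis in \cite{FMS} --- but as written your (i) is a plausible sketch rather than a proof, whereas your (ii), which is the part the paper actually proves, is complete and matches the paper.
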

\begin{proof}
For the proof of $(i)$, see \cite{FMS}.  To prove $(ii)$, we first use polar coordinates to deduce that  
\be\label{eq:eq-satis-th}
\rho^{2-N} (\rho^{N-2} \th_\rh)_\rho+\th_{tt}= S_{N,\s} \rho^{-\s} \th^{2^*_\s-1} \qquad\textrm{ for $t,\rh\in \R_+$}.
\ee
Integrating this identity in  the $\rho$ variable, we therefore get, for every $\rho>0$,
$$
\th_\rho(t,\rho)=\frac{-1}{\rho^{N-2}}\int_0^\rho  r^{N-2}  \th_{tt} (t,r)dr+ S_{N,\s}  \frac{1}{\rho^{N-2}}\int_0^\rho r^{N-2} r^{-\s} \th^{2^*_\s-1} (t,r) dr.
$$
Moreover, we have 
$$
\th_{t\rho}(t,\rho)=\frac{-1}{\rho^{N-2}}\int_0^\rho  r^{N-2}  \th_{ttt} (t,r)dr+ S_{N,\s}  \frac{1}{\rho^{N-2}}\int_0^\rho r^{N-2} r^{-\s} \de_t\th(t,r) \th^{2^*_\s-2} (t,r) dr.
$$
By  $(i)$ and the fact that  $2^*_\s\geq 2$, we obtain
$$
|\th_\rho(t,\rho)|+ |\th_{t\rho}(t,\rho)| \leq  C \rho+C\rho^{1-\s}  \leq C \rho^{1-\s} \qquad\textrm{ for $|(t,\rho)|\leq 1$}.
$$ 
Now using this in \eqref{eq:eq-satis-th}, we get $| \th_{\rh\rh}|\leq C \rh^{-\s}$, for $|(t,\rho)|\leq 1$.   The proof of $(ii)$ is completed.
 \end{proof}
As a consequence we derive decay estimates of the derivatives of $w$ up to order two.
\begin{corollary}\label{cor:dec-est-w}
Let $w$ be a ground state for $S_{N,\s}$ then there exist positive   constants $C_1,C_2$, only depending on $N$ and $\s$, such that 
\item[(i)] For every $x\in \R^N$  
  \begin{equation}\label{DecayEstimates111}
\frac{C_1}{1+|x|^{N-2}}\leq w(x) \leq \frac{C_2}{1+|x|^{N-2}} .
\end{equation}

\item[(ii)] For   $|x|= |(t,z)|\leq 1$  
$$ 
|\n w (x)|+ |x| |D^2 w (x)|\leq C_2 |z|^{1-\s}
$$
\item[(iii)] For   $|x|= |(t,z)|\geq 1$ 
$$
|\n w(x)|+ |x| |D^2 w(x)|\leq C_2 \max(1, |z|^{-\s})|x|^{1 -N}.
$$
\end{corollary}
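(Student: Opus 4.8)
The plan is to push all derivative information back onto the one–variable profile $\theta$ from \eqref{eq:AE} and to invoke the sharp near–axis estimates of Lemma \ref{lem:dec-est-theta}, using the equation \eqref{ExpoEA} (equivalently \eqref{eq:eq-satis-th}) only to control the far field. Writing $\rho=|z|$, so that $w(t,z)=\theta(|t|,\rho)$, the chain rule gives, away from the singular axis $\{z=0\}$,
\[
\partial_t w=\operatorname{sgn}(t)\,\theta_t,\qquad \nabla_z w=\theta_\rho\,\frac{z}{\rho},\qquad \partial_{tt}w=\theta_{tt},\qquad \partial_t\nabla_z w=\operatorname{sgn}(t)\,\theta_{t\rho}\,\frac{z}{\rho},
\]
and
\[
\nabla_z^2 w=\theta_{\rho\rho}\,\frac{z\otimes z}{\rho^2}+\frac{\theta_\rho}{\rho}\Big(\mathrm{I}-\frac{z\otimes z}{\rho^2}\Big).
\]
Thus $|\nabla w|\le |\theta_t|+|\theta_\rho|$ and $|D^2w|\le C\big(|\theta_{tt}|+|\theta_{t\rho}|+|\theta_{\rho\rho}|+\rho^{-1}|\theta_\rho|\big)$, and the whole statement is reduced to bounding these five scalars in each regime. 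In particular the two eigenvalues of $\nabla_z^2w$ are the radial one $\theta_{\rho\rho}$ and the tangential one $\rho^{-1}\theta_\rho$, both of size $\rho^{-\sigma}$ near the axis with no cancellation.

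For (i) I would first record that $w\in L^\infty$: this comes from $w\in\mathcal D^{1,2}(\R^N)$ by a Brezis–Kato/Moser iteration adapted to the weight $|z|^{-\sigma}$ (admissible since $\sigma<2\le N-1$, so $|z|^{-\sigma}\in L^1_{\loc}$), and is contained in \cite{FMS}. Setting $V:=S_{N,\sigma}\,|z|^{-\sigma}w^{2^*_\sigma-1}\ge0$, the unique solution of \eqref{ExpoEA} vanishing at infinity is the Newtonian potential $w(x)=c_N\int_{\R^N}|x-y|^{2-N}V(y)\,dy$. The lower bound is immediate: for $y\in B_1$ one has $|x-y|\le 1+|x|$, whence $w(x)\ge c_N(1+|x|)^{2-N}\int_{B_1}V>0$. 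For the upper bound I split the convolution into $\{|y|\le|x|/2\}$, $\{|y-x|\le|x|/2\}$ and the remaining region, and use boundedness together with the bootstrapped decay $w\lesssim|x|^{2-N}$; this classical estimate yields \eqref{DecayEstimates111}.

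For (ii), in the range $|x|\le1$ I use the refined behaviour obtained inside the proof of Lemma \ref{lem:dec-est-theta}, namely $|\theta_\rho|+|\theta_{t\rho}|\le C\rho^{1-\sigma}$ and $|\theta_{\rho\rho}|\le C\rho^{-\sigma}$, together with the uniform bound $|\theta_{tt}|\le C$ from Lemma \ref{lem:dec-est-theta}(i). Substituting into the formulas above, the genuinely singular second–order contributions $\rho^{-1}|\theta_\rho|$ and $|\theta_{\rho\rho}|$ are both $O(\rho^{-\sigma})=O(|z|^{-\sigma})$, while the first–order terms are controlled at the rate $|z|^{1-\sigma}$; combined with the factor $|x|\le1$ in front of $D^2w$ this gives the claimed control near the origin, the mixed and tangential terms being of strictly lower order. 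For (iii), with $|x|\ge1$, I combine (i) with rescaled interior elliptic estimates. Where $|z|\ge|x|/2$ the weight $|z|^{-\sigma}$ is comparable to $|x|^{-\sigma}$, so $w_x(y):=|x|^{N-2}w(x+|x|y)$ solves a uniformly elliptic equation with bounded right–hand side on $B_{1/4}(0)$, and Schauder estimates give $|\nabla w|\lesssim|x|^{1-N}$, $|D^2w|\lesssim|x|^{-N}$, i.e. the weightless part of the bound. Where $|z|\le|x|/2$ I instead integrate \eqref{eq:eq-satis-th} in $\rho$ exactly as in Lemma \ref{lem:dec-est-theta}(ii), but now carrying the extra spatial decay supplied by (i); this produces the additional factor $\max(1,|z|^{-\sigma})$.

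The main obstacle is precisely this last interplay. Since $w$ is neither explicit, nor radial, nor smooth across $\{z=0\}$, standard elliptic regularity is unavailable on the axis, and one must lean on the reduced ODE \eqref{eq:eq-satis-th} and the structure $w=\theta(|t|,\rho)$ to capture simultaneously the spatial decay $|x|^{2-N}$ and the weight singularity $|z|^{-\sigma}$. The delicate point is to match the near–axis and off–axis regimes so that the single expression $\max(1,|z|^{-\sigma})\,|x|^{1-N}$ dominates uniformly on $\{|x|\ge1\}$, and to verify that the derivatives inherit exactly one extra power of $|z|$ (respectively $|x|$) relative to $w$ itself.
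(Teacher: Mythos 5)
Your treatment of (i) and (ii) is consistent with the paper: the paper simply quotes \cite[Lemma 3.1]{FMS} for the two-sided bound \eqref{DecayEstimates111} (your Newtonian-potential sketch is a standard substitute), and it reads (ii) off Lemma \ref{lem:dec-est-theta} through the chain rule for $w(t,z)=\theta(|t|,|z|)$, exactly as you do. The real divergence is in (iii), and there your argument has a genuine gap. The paper proves (iii) in one line by a Kelvin transform: the function $v(x)=|x|^{2-N}\,\theta\bigl(|t|\,|x|^{-2},|z|\,|x|^{-2}\bigr)$ is again a ground state for $S_{N,\sigma}$ (both the Dirichlet energy and the constraint $\int_{\mathbb{R}^N}|z|^{-\sigma}|v|^{2^*_\sigma}\,dx$ are invariant under the inversion $x\mapsto x/|x|^2$), hence $v$ satisfies (ii), and undoing the inversion turns the near-origin estimate for $v$ into the far-field estimate for $w$. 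Your substitute is a two-regime argument: rescaled Schauder estimates where $|z|\geq|x|/2$ (this part is fine, since there the weight is comparable to $|x|^{-\sigma}$ and the rescaled equation is uniformly elliptic with bounded data), and a repetition of the $\rho$-integration of \eqref{eq:eq-satis-th} where $|z|\leq|x|/2$, ``carrying the extra spatial decay supplied by (i)''.

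It is this second regime that does not close. Integrating \eqref{eq:eq-satis-th} in $\rho$ forces you to control
$$
\rho^{2-N}\int_0^{\rho} r^{N-2}\,\theta_{tt}(t,r)\,dr
$$
(and the analogous integral of $\theta_{ttt}$ for the mixed second derivative). Part (i) supplies decay of $w=\theta$ itself, which only enters the nonlinear term; Lemma \ref{lem:dec-est-theta}(i) supplies only \emph{uniform} bounds on the $t$-derivatives, with no decay as $|t|\to\infty$. With nothing better than $|\theta_{tt}|\leq C$, the linear term contributes $O(\rho)$ to $\theta_{\rho}(t,\rho)$; at, say, $\rho=1$ and $|t|=R$ large this is $O(1)$, while the claimed bound is $\max(1,|z|^{-\sigma})\,|x|^{1-N}=O(R^{1-N})$. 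To make your route work you would first need far-field decay of the $t$-derivatives of $\theta$ along the axis (roughly $|\partial_t^k\theta(t,\rho)|\lesssim |t|^{2-N-k}$), which is not among your ingredients and whose proof is essentially of the same difficulty as (iii) itself. The Kelvin transform is precisely the device that furnishes this for free, by mapping the far field onto a neighbourhood of the origin where Lemma \ref{lem:dec-est-theta} applies; I would recommend adopting it rather than trying to patch the near-axis integration.
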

\begin{proof}
For the proof of $(i)$, we refer to \cite[Lemma 3.1]{FMS}. The proof of $(ii)$ is an immediate consequence of Lemma \ref{lem:dec-est-theta}(ii), recalling that $w(t,z)=\th(|t|,|z|)$. Now $(iii)$ follows by Kelvin transform, using that the function $v: \R^N\to  \R$, given by $v(t,z)=v(x)=\th (|t| |x|^{-2},|z||x|^{-2} ) |x|^{2-N}$ is also a  ground state for $S_{N,\s}$, thus it satisfies $(ii)$.
\end{proof}

%
%
%
%
%
%
%
%
%
We close this section with the following result.
\begin{lemma}\label{lem:to-prove}
Let $v\in \cD^{1,2}(\R^N)$, $N\geq 3,$ satisfy $v(t,z)=\ov\th(|t|,|z|)$, for some some function $\ov\th:\R_+\times \R_+\to \R$. Then  for $0<r<R$, we have 
\begin{align*}
 \int_{\B_{R}\setminus \B_{r}}|\n v|^2_{g}\sqrt{|g|} dx&= \int_{\B_{R}\setminus \B_{r}}|\n v|^2  dx+  \frac{|\kappa(x_0)|^2}{N-1} \int_{\B_{R}\setminus \B_{r}} |z|^2 \left|{\de_t v} \right|^2 dx\\
&+ \frac{|\kappa(x_0)|^2}{2(N-1)} \int_{\B_{R}\setminus  \B_{r}} |z|^2 |\n v|^2 dx
+O\left( \int_{\B_{R}\setminus \B_{r}} |x|^3 |\n v|^2 dx \right).
\end{align*}
\end{lemma}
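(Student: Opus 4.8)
The plan is to expand the integrand $|\n v|_g^2\sqrt{|g|}$ pointwise to second order in $x$ using Lemma~\ref{MaMetricMetric}, integrate over $\B_R\setminus\B_r$, and exploit the symmetries of both $v$ and the domain to discard everything except the three stated terms. I would begin from the pointwise identity
$$
|\n v|_g^2 = g^{11}(\de_t v)^2 + 2\sum_{i=2}^N g^{1i}\,\de_t v\,\de_{z_i}v + \sum_{i,j=2}^N g^{ij}\,\de_{z_i}v\,\de_{z_j}v,
$$
together with the consequences of $v(t,z)=\ov\th(|t|,|z|)$: the derivative $\de_t v=\operatorname{sgn}(t)\,\ov\th_t$ is odd in $t$ and even in each $z_i$, the derivative $\de_{z_i}v=\tfrac{z_i}{|z|}\ov\th_\rho$ is even in $t$ and odd in $z_i$, and $\sum_{i}(\de_{z_i}v)^2=\ov\th_\rho^2$. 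Since the metric remainders in \eqref{DeterminantMetric} and \eqref{InverseMetric} are $O(|x|^3)$ and $|\de_\a v\,\de_\b v|\le|\n v|^2$, any order-three contribution is automatically absorbed into the error $O(\int_{\B_R\setminus\B_r}|x|^3|\n v|^2)$.

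Next I would form the three products $g^{11}\sqrt{|g|}$, $g^{1i}\sqrt{|g|}$ and $g^{ij}\sqrt{|g|}$ to order two. Writing $L=\sum_i z_i\kappa_i(0)$, $M=t\sum_i z_i\kappa_i'(0)$ and $K=\sum_{ij}z_iz_j\kappa_i(0)\kappa_j(0)$, a short multiplication gives $g^{11}\sqrt{|g|}=1-L-M+\tfrac32 K+O(|x|^3)$, where it is worth noting that the $\beta_{ij}$-terms in $g^{11}$ cancel against those in $\sqrt{|g|}$. The integration step then rests on two facts about $\B_R\setminus\B_r$: it is invariant under each reflection $z_i\mapsto-z_i$ and $t\mapsto-t$, and under all rotations of the $z$-variable, so that for $\Phi=\Phi(|t|,|z|)$ one has $\int z_iz_j\,\Phi = \frac{\delta_{ij}}{N-1}\int|z|^2\Phi$ and $\int z_iz_jz_lz_m\,\Phi = c\,(\delta_{ij}\delta_{lm}+\delta_{il}\delta_{jm}+\delta_{im}\delta_{jl})$. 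Every $L$-term (linear in $z$) and every $M$-term (odd in $t$) integrates to zero, the surviving diagonal part of $K$, namely $\sum_i z_i^2\kappa_i(0)^2$, feeds the first moment identity and yields the factor $\frac{|\kappa(x_0)|^2}{N-1}$ with $|\kappa(x_0)|^2=\sum_i\kappa_i(0)^2$, and the order-zero parts assemble into $\int|\n v|^2$. This produces the base of the formula together with the curvature coefficients $\tfrac{3|\kappa(x_0)|^2}{2(N-1)}$ from the $g^{11}$ block and $\tfrac{|\kappa(x_0)|^2}{2(N-1)}$ from the $g^{ij}$ block.

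The main obstacle is to show that all torsion contributions vanish, since a priori they are genuinely of order $|x|^2$ and are not swept into the error. In the mixed block, the only term surviving the $t$-parity is $-t\sum_j z_j(\tau_j^i)'\,\de_t v\,\de_{z_i}v$; its off-diagonal part vanishes under the reflection $z_i\mapsto-z_i$, and its diagonal part vanishes because $\tau_i^i\equiv0$ by the antisymmetry \eqref{eq:tau-antisymm}. In the $g^{ij}$ block the torsion produces $\frac{\ov\th_\rho^2}{|z|^2}\sum_{ijlm}z_iz_jz_lz_m\,\tau_l^j\tau_m^i$, and the fourth-moment identity reduces its integral to the contraction of $\delta_{ij}\delta_{lm}+\delta_{il}\delta_{jm}+\delta_{im}\delta_{jl}$ against $\tau_l^j\tau_m^i$; this contraction equals $\sum_{il}(\tau_l^i)^2-\sum_{ij}(\tau_i^j)^2+0$, which is zero, again by antisymmetry. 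I expect verifying these two cancellations cleanly — keeping track of index conventions and of which terms are of exact order two — to be the most error-prone part.

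Finally, once the torsion terms are gone, I would collect the base term $\int|\n v|^2$, the contribution $\tfrac{3|\kappa(x_0)|^2}{2(N-1)}\int|z|^2(\de_t v)^2$ from the $(\de_t v)^2$ block and the contribution $\tfrac{|\kappa(x_0)|^2}{2(N-1)}\int|z|^2\sum_i(\de_{z_i}v)^2$ from the $\de_{z_i}v\,\de_{z_j}v$ block, and rewrite $\int|z|^2\sum_i(\de_{z_i}v)^2=\int|z|^2|\n v|^2-\int|z|^2(\de_t v)^2$. Combining the two curvature integrals then gives $\frac{|\kappa(x_0)|^2}{N-1}\int|z|^2(\de_t v)^2+\frac{|\kappa(x_0)|^2}{2(N-1)}\int|z|^2|\n v|^2$, which is exactly the claimed identity, with remainder $O(\int_{\B_R\setminus\B_r}|x|^3|\n v|^2)$.
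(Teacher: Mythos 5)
Your proposal is correct and follows essentially the same route as the paper's proof: a second-order expansion of $g^{\a\b}$ and $\sqrt{|g|}$ from Lemma \ref{MaMetricMetric}, followed by parity and rotational-symmetry moment identities on the cylinder and the antisymmetry \eqref{eq:tau-antisymm} to kill the torsion terms, with all cubic metric remainders absorbed into $O\bigl(\int |x|^3|\n v|^2\,dx\bigr)$. The only (cosmetic) difference is that you distribute $\sqrt{|g|}$ into each block and recombine via $|\n_z v|^2=|\n v|^2-(\de_t v)^2$ at the end, whereas the paper splits off $\int |\n v|^2(\sqrt{|g|}-1)\,dx$ separately; the coefficients match either way.
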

\begin{proof}
It is easy to see that
\begin{align}
\int_{\B_{R}\setminus \B_{r}}|\n v|^2_{g}\sqrt{|g|} dx&= \int_{\B_{R}\setminus \B_{r}}|\n v|^2  dx+  \int_{\B_{R}\setminus \B_{r}} (|\n v|^2_{g} -|\n v|^2)\sqrt{|g|} dx \nonumber\\
&+ \int_{\B_{R}\setminus \B_{r}}|\n v|^2 (\sqrt{|g|} -1)dx. \label{eq:nv-g-eps-sqrt}
\end{align}
We recall that
\begin{align*}
|\n v|^2_{g}(x) -|\n v|^2(x)=  \sum_{\a\b=1}^N\left[ g^{\a\b}( x)- \d_{\a\b} \right] \de_{z_\a} v(x) \de_{z_\b} v(x).
\end{align*}
It then follows that 
\begin{align}\label{eq:decom-nabla-v}
 \int_{\B_{R}\setminus \B_{r}}  \left[ |\n v|^2_{g} -|\n v|^2 \right]\sqrt{|g|}dx =& \sum_{ij=2}^N \int_{\B_{R}\setminus \B_{r}} \left[g^{ij}-\d_{ij}\right] \de_{z_i} v   \de_{z_j} v   \sqrt{|g|} dx  \nonumber\\
 &+\sum_{i=2}^N\int_{\B_{R}\setminus \B_{r}} g^{i1}\left(\de_t v   \de_{z_i} v \right) \sqrt{|g|}  dx  \\
 &+ \sum_{i=2}^N\int_{\B_{R}\setminus \B_{r}} [g^{11} -1]\left(\de_t v  \right)^2\sqrt{|g|} dx  \nonumber.
\end{align}
We first use  Lemma \ref{MaMetricMetric} and \eqref{eq:tau-antisymm}, to get
\begin{align}\label{eq:A}
 \displaystyle \sum_{ij=2}^N \int_{\B_{R} \setminus \B_{r}   } &\left[g^{ij} -\d_{ij}\right] \de_{z_i} v  \de_{z_j} v   \sqrt{|g|} \,dx\nonumber\\
& = \sum_{ij=2}^N \sum_{lm=2}^N \tau^i_m(0) \tau^j_l(0) \int_{\B_{R}  \setminus \B_{r}  } z_i z_j z_l z_m\frac{|\n_z v|^2}{|z|^2}    \,dx+O\left(\int_{\B_{R}\setminus \B_{r}} |x|^3 |\n_z v|^2 \,dx\right)\nonumber\\
&=O\left( \int_{\B_{R }\setminus \B_{r}} |x|^3 |\n_z w|^2 dx\right).
\end{align}
Next, we observe that
$$
\sum_{i=2}^N\int_{\B_{R}\setminus \B_{r}} g^{i1} \left(\de_t v \cdot \de_i v \right) \sqrt{|g|}  \,dx
=\sum_{i=2}^N \int_{\B_{R}\setminus \B_{r}} \Upsilon(|t|, |z|) t  z_i g^{i1} \, dx,
$$
where $\Upsilon(|t|, |z|)=\ov \th_t(|t|,|z|) \ov  \th_\rho(|t|,|z|) \frac{1}{|t|} \frac{1}{|z|}.$
In addition, from \eqref{eq:tau-antisymm}, we see that 
$$
\sum_{ij=2}^N  \t^i_j(0)z_i z_j=\sum_{ij=2}^N (\t^i_i)'(0)z_i z_j=0.
$$
 Consequently,    from \eqref{DeterminantMetric} and \eqref{InverseMetric}, we  obtain
{\begin{align}\label{eq:gi1vt-vi}
&\displaystyle\sum_{i=2}^N\int_{\B_{R}\setminus \B_{r}} g^{i1}   {\de_t v}   \de_{z_i} v   \sqrt{|g|}  \,dx
\displaystyle=\int_{\B_{R}\setminus \B_{r}} \Upsilon(|t|, |z|) t\sum_{i=2}^N  z_i g^{i1}  \sqrt{|g|}  \, dtdz \nonumber\\
&\displaystyle=-\sum_{ij=2}^N   \tau^i_j(0) \int_{\B_{R}\setminus \B_{r}} \Upsilon(|t|, |z|) t z_i z_j \,dtdz
-   \sum_{ij=2}^N \left(\tau^i_j\right)^\prime(0)\int_{\B_{R}\setminus \B_{r}} \Upsilon(|t|, |z|) t^2  z_i z_j \,dtdz \nonumber\\
&\displaystyle\,\, +2 \sum_{ijl=2}^N  \kappa_l(0) \tau_i^j(0)  \int_{\B_{R}\setminus \B_{r}} \Upsilon(|t|, |z|)t z_i z_j z_l\, dtdz    - \sum_{ijl=2}^N   \kappa^\prime(0) \tau^i_j(0)\int_{\B_{R}\setminus \B_{r}} \Upsilon(|t|, |z|) z_l z_i z_j t^2\, dtdz \nonumber \\
&\displaystyle \,\,- \sum_{ijl=2}^N   \tau^i_j(0) \kappa_l(0) \int_{\B_{R}\setminus \B_{r}} \Upsilon(|t|, |z|) z_l z_i z_j t \,dtdz+O\left(\int_{\B_{R}\setminus \B_{r}} |x|^3 |\n v|^2 \,dx\right) \nonumber\\
&\displaystyle = O\left( \int_{\B_{R}\setminus \B_{r}} |x|^3 |\n v|^2 \,dx\right).
\end{align}
}
By \eqref{DeterminantMetric} and \eqref{InverseMetric},  we have
$$
\int_{\B_{R}\setminus \B_{r}} \left| {\de_t v} \right|^2 \left[g^{11} -1\right]  \sqrt{|g|}  \,dx
=
  \frac{|\kappa(x_0)|^2}{N-1} \int_{\B_{R}\setminus \B_{r}} |z|^2 \left| {\de_t v}\right|^2 dx+O\left( \int_{\B_{R}\setminus \B_{r}} |x|^3 \left| {\de_t v} \right|^2 dx\right).
$$
Using this, \eqref{eq:A} and  \eqref{eq:gi1vt-vi}  in \eqref{eq:decom-nabla-v}, we then deduce that 
\be \label{eq:nv-eps-minus-nv}
\begin{array}{ll}
&\displaystyle  \int_{\B_{R}\setminus \B_{r}}  \left[ |\n v|^2_{g }  -|\n v|^2  \right]   \sqrt{|g|}  \,dx= \frac{|\kappa(x_0)|^2}{N-1} \int_{\B_{R}\setminus \B_{r}} |z|^2 \left| {\de_t v} \right|^2 dx+O\left(  \int_{\B_{R}\setminus \B_{r}} |x|^3 |\n v|^2 dx\right).
\end{array}
\ee
 Now by \eqref{DeterminantMetric} and \eqref{InverseMetric}, we also have that 
$$
  \int_{\B_{R }\setminus \B_{r }}|\n v|^2 (\sqrt{|g |} -1)dx= 
 \frac{|\kappa(y_0)|^2}{2(N-1)} \int_{\B_{R }\setminus  \B_{r }} |z|^2 |\n v|^2 dx + O\left(  \int_{\B_{ R}} |x|^3 |\n v|^2 dx\right).
$$
This with \eqref{eq:nv-eps-minus-nv} and  \eqref{eq:nv-g-eps-sqrt} give the desired result.

\end{proof}
%
%
%
%
%


%
\section{Existence of minimzers for $\mu_h(\O,\G)$ in dimension $N\geq 4$}\label{s:Exits-N-geq4}
We consider $\O$ a bounded domain of $\R^N$, $N\geq3$, and $\G\subset \O$ be a smooth closed curve.  For $u\in H^1_0(\O)\setminus\{0\}$, we define the ratio
\be\label{eq:def-J-u} 
J\left(u\right):=\frac{\displaystyle\int_\O |\n u |^2 dy+\int_\O h u^2 dy}{\left(\displaystyle\int_\O \rho^{-\s}_\G|u|^{2^*_\s} dy\right)^{2/2^*_\s}}.
\ee
We will construct a family of test function $(u_\e)_\e\in H^1_0(\O)$ and provide an expansion for $J(u_\e)$ as  $\e\to 0$.
We let 
$\eta \in \calC^\infty_c\left(F_{y_0}\left({Q}_{2r}\right)\right)$ be such that
$$
0\leq \eta \leq 1 \qquad \textrm{ and }\qquad \eta \equiv 1 \quad \textrm{in }  \B_r .
$$
For $\e>0$, we consider  $u_\e: \O \to  \R$ given  by
\begin{equation}\label{eq:TestFunction-w}
u_\e(y):=\e^{\frac{2-N}{2}} \eta(F^{-1}_{y_0}(y)) w \left(\frac{F^{-1}_{y_0}(y)}{\e} \right).
\end{equation}
In particular,  for every $x=(t,z)\in \R\times \R^{N-1}$, we have 
\begin{equation}\label{eq:TestFunction-th}
u_\e\left(F_{y_0}(x)\right):=\e^{\frac{2-N}{2}}\eta\left(x\right)\th \left(\frac{|t|}{\e},\frac{|z|}{\e} \right).
\end{equation}
It is clear that $u_\e \in H^1_0(\O).$ We have the following
\begin{lemma}\label{Expansion}
For $J$  given by \eqref{eq:def-J-u}  and $u_\e$ given by \eqref{eq:TestFunction-w}, as $\e\to0$, we have 
\begin{align}\label{eq:expans-J-u-eps}
J\left(u_\e\right)=  & S_{N,\s}+  \e^2\frac{|\kappa(x_0)|^2}{N-1} \int_{  \B_{r/\e}} |z|^2 \left|{\de_t w} \right|^2 dx+\e^2\frac{|\kappa(x_0)|^2}{2(N-1)} \int_{  \B_{r/\e}} |z|^2 |\n w|^2 dx \nonumber\\
&  -\frac{\e^2}{2^*_\s} \frac{|\kappa(y_0)|^2}{(N-1)} S_{N,\s}\int_{\B_{r/\e}} |z|^{2-\s} w^{2^*_\s} dx+  \e^2  h(y_0) \int_{\B_{r/\e}} w^2 dx\\
&+O\left( \e^2   \int_{\B_{r/\e}}|h(F_{y_0}(\e x))-h(y_0)|  w^2 dx\right)+O\left(\e^{N-2}\right). \nonumber
\end{align}
 
\end{lemma}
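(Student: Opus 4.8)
The plan is to expand $J(u_\e)=N(u_\e)/D(u_\e)^{2/2^*_\s}$ by computing the numerator $N(u_\e)=\int_\O|\n u_\e|^2\,dy+\int_\O h u_\e^2\,dy$ and the denominator $D(u_\e)=\int_\O\rho_\G^{-\s}|u_\e|^{2^*_\s}\,dy$ separately. Since $\eta$ is supported in $F_{y_0}(Q_{2r})$, every integral reduces to an integral over $Q_{2r}$ of $v_\e:=u_\e\circ F_{y_0}$ against the induced metric $g$; I then substitute $x=\e\xi$, so that $w(x/\e)=w(\xi)$ and the concentration is rescaled to order one. The region $\{r\le|x|\le 2r\}$, where $\eta$ and $\n\eta$ are active, contributes only $O(\e^{N-2})$, because there $w(x/\e)\le C\e^{N-2}|x|^{2-N}$ by Corollary \ref{cor:dec-est-w}(i); this is how the final $O(\e^{N-2})$ error arises. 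Throughout I use the normalizations $\int_{\R^N}|z|^{-\s}w^{2^*_\s}\,dx=1$ and, testing \eqref{ExpoEA} against $w$, $\int_{\R^N}|\n w|^2\,dx=S_{N,\s}$.

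For the Dirichlet part I would write $\int_\O|\n u_\e|^2\,dy=\int_{Q_{2r}}|\n v_\e|^2_{g}\sqrt{|g|}\,dx$ and apply Lemma \ref{lem:to-prove} (whose hypothesis $v(t,z)=\ov\th(|t|,|z|)$ holds for $v_\e$ up to the radial cutoff, applied on $\B_{2r}$, the singularity of $\n w$ at $z=0$ being integrable by Corollary \ref{cor:dec-est-w}(ii)). After the substitution $x=\e\xi$ the flat term $\int|\n v_\e|^2$ becomes $\int_{\B_{r/\e}}|\n w|^2\,dx=S_{N,\s}+o(\e^2)$, while the two curvature terms carry an explicit factor $|z|^2$ and hence gain the factor $\e^2$, producing exactly $\e^2\frac{|\k(y_0)|^2}{N-1}\int_{\B_{r/\e}}|z|^2|\de_t w|^2$ and $\e^2\frac{|\k(y_0)|^2}{2(N-1)}\int_{\B_{r/\e}}|z|^2|\n w|^2$; the metric remainder $O(\int|x|^3|\n v_\e|^2)$ rescales to a lower-order quantity. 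For the potential part, the same change of variables gives $\int_\O h u_\e^2\,dy=\e^2\int_{\B_{r/\e}}h(F_{y_0}(\e\xi))\,w^2\,\sqrt{|g|}(\e\xi)\,d\xi$; splitting $h(F_{y_0}(\e\xi))=h(y_0)+(h(F_{y_0}(\e\xi))-h(y_0))$ and using $\sqrt{|g|}=1+O(\e|\xi|)$ from Lemma \ref{MaMetricMetric} isolates $\e^2 h(y_0)\int_{\B_{r/\e}}w^2$ plus the stated oscillation error $O(\e^2\int|h(F_{y_0}(\e x))-h(y_0)|w^2)$.

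For the denominator I would use $\rho_\G(F_{y_0}(x))=|z|$ from \eqref{eq:rho_Gamm-is-mod-z}, so that $D(u_\e)=\int_{Q_{2r}}|z|^{-\s}|v_\e|^{2^*_\s}\sqrt{|g|}\,dx$; the scaling exponents cancel because $2^*_\s=2(N-\s)/(N-2)$, leaving $\int_{\B_{r/\e}}|z|^{-\s}w^{2^*_\s}\sqrt{|g|}(\e\xi)\,d\xi$. Inserting the expansion of $\sqrt{|g|}$ from Lemma \ref{MaMetricMetric}, the terms linear in $z$ and the mixed $t\,z$ term integrate to zero since $w=\th(|t|,|z|)$ is even in each $z_i$ and $|z|^{-\s}$ is radial in $z$; the off-diagonal quadratic terms vanish likewise, and the diagonal ones combine through $\int z_i^2 f(|z|)=\frac{1}{N-1}\int|z|^2 f(|z|)$ into $\frac{\e^2}{2}\frac{|\k(y_0)|^2}{N-1}\int_{\B_{r/\e}}|z|^{2-\s}w^{2^*_\s}$. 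Since the leading term is $1+O(\e^{N-\s})$, raising to the power $2/2^*_\s$ and Taylor expanding yields $D(u_\e)^{2/2^*_\s}=1+\e^2 R+o(\e^2)$ with $R=\frac{1}{2^*_\s}\frac{|\k(y_0)|^2}{N-1}\int_{\B_{r/\e}}|z|^{2-\s}w^{2^*_\s}$, the factor $\tfrac12$ being absorbed by $\tfrac{2}{2^*_\s}\cdot\tfrac12=\tfrac1{2^*_\s}$.

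Finally I would multiply the numerator expansion by $(1+\e^2 R)^{-1}=1-\e^2 R+O(\e^4)$; using $N(u_\e)=S_{N,\s}+o(1)$, the product reproduces the four displayed $\e^2$-terms, the denominator correction contributing precisely $-\frac{\e^2}{2^*_\s}\frac{|\k(y_0)|^2}{N-1}S_{N,\s}\int_{\B_{r/\e}}|z|^{2-\s}w^{2^*_\s}$. The main obstacle is the error bookkeeping rather than any single estimate: I must show that the cutoff contributions and the metric remainders $O(\int|x|^3|\n v_\e|^2)$ are dominated by $\e^{N-2}$ (or by the explicit $\e^2$-order terms), which in the borderline dimension $N=4$ has to be read against the logarithmic growth of $\int_{\B_{r/\e}}|z|^2|\de_t w|^2$ and $\int_{\B_{r/\e}}w^2$, and I must control the tails of the truncated integrals uniformly in $\e$; both are handled with the pointwise decay bounds of Corollary \ref{cor:dec-est-w}(ii)--(iii).
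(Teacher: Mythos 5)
Your proposal follows essentially the same route as the paper: split numerator and denominator, push the cutoff contributions into the $O(\e^{N-2})$ error via the decay of $w$, apply Lemma \ref{lem:to-prove} on the region where $\eta\equiv 1$, use the parity of $w=\th(|t|,|z|)$ to kill the odd terms in the $\sqrt{|g|}$ expansion of the denominator, and Taylor-expand the power $2/2^*_\s$ so that the factor $\tfrac12$ becomes $\tfrac1{2^*_\s}$. The only cosmetic slip is writing the flat Dirichlet term as $S_{N,\s}+o(\e^2)$ rather than $S_{N,\s}+O(\e^{N-2})$ (the tail is exactly of order $\e^{N-2}$, which is not $o(\e^2)$ when $N=4$), but this is harmlessly absorbed into the stated $O(\e^{N-2})$ remainder.
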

\begin{proof}
To simplify the notations, we will write $F$  in the place of $F_{y_0}$.
Recalling \eqref{eq:TestFunction-w}, we   write
$$
u_\e(y)=\e^{\frac{2-N}{2}} \eta(F^{-1} (y)) W_\e(y),
$$
where $W_\e (y) =w \left(\frac{F^{-1} (y)}{\e} \right)$.
Then
$$
|\n u_\e |^2 =\e^{2-N} 
\left(
\eta^2 |\n W_\e|^2+\eta^2 |\n W_\e |^2+\frac{1}{2} \n W_\e^2 \cdot \n \eta^2
\right).
$$
Integrating by parts, we have
\begin{align}\label{eq:expan-nabla-u-eps}
\displaystyle \int_\O |\n u_\e|^2 dy &\displaystyle=\e^{2-N} \int_{F\left({Q}_{2r}\right)} \eta^2 |\n W_\e|^2  dy+\e^{2-N} \int_{F\left({Q}_{2r}\right)\setminus F\left(\B_{r}\right)} W_\e^2 \left(|\n \eta|^2-\frac{1}{2} \D \eta^2 \right)  dy \nonumber\\
&\displaystyle=\e^{2-N} \int_{F\left({Q}_{2r}\right)} \eta^2 |\n W_\e|^2  dy-\e^{2-N} \int_{F\left({Q}_{2r}\right)\setminus F\left(\B_{r}\right)}W_\e^2 \eta \D \eta  dy  \nonumber\\
&\displaystyle=\e^{2-N} \int_{F\left({Q}_{2r}\right)} \eta^2 |\n W_\e|^2  dy+O\left(\e^{2-N} \int_{F\left({Q}_{2r}\right)\setminus F\left(\B_{r}\right)} W_\e^2  dy\right) .
\end{align}

By the  change of variable $y=\frac{F(x)}{\e}$ and \eqref{eq:TestFunction-th}, we can apply Lemma \ref{lem:to-prove}, to get
\begin{align*}
&\displaystyle \int_\O |\n u_\e|^2 dy\displaystyle= \int_{ {Q}_{r/\e} } |\n w|^2_{g_\e}\sqrt{|g_\e|} dx+O\left(\e^{2} \int_{ {Q}_{2r/\e} \setminus  \B_{r/\e} }w^2 dx+
 \int_{ {Q}_{2r/\e} \setminus  \B_{r/\e} }|\n w|^2 dx \right)\\
&= \int_{\R^N}|\n w|^2 dx+   \e^2\frac{|\kappa(y_0)|^2}{N-1} \int_{  \B_{r/\e}} |z|^2 \left|{\de_t w} \right|^2 dx+\e^2\frac{|\kappa(y_0)|^2}{2(N-1)} \int_{  \B_{r/\e}} |z|^2 |\n w|^2 dx\\
&+  O\left(\e^3 \int_{ \B_{r/\e}} |x|^3 |\n w|^2 dx +\e^2 \int_{\B_{2r/\e}\setminus \B_{r/\e}}  |w|^2 dx+  \int_{\R^N\setminus\B_{r/\e}}|\n w|^2 dx+  \e^2 \int_{\B_{2r/\e}\setminus \B_{r/\e}}|z|^2|\n w|^2 dx \right).
%
\end{align*}
Using Corollary \ref{cor:dec-est-w}, we find  that 
\begin{align*}
\displaystyle \int_\O |\n u_\e|^2 dy &\displaystyle
=  S_{N,\s}+  \e^2\frac{|\kappa(y_0)|^2}{N-1} \int_{  \B_{r/\e}} |z|^2 \left|{\de_t w} \right|^2 dx+\e^2\frac{|\kappa(y_0)|^2}{2(N-1)} \int_{  \B_{r/\e}} |z|^2 |\n w|^2 dx+ O\left(\e^{N-2}  \right).
\end{align*}

%
By the change of variable $y=\frac{F(x)}{\e}$, \eqref{eq:AE}, \eqref{eq:rho_Gamm-is-mod-z} and   \eqref{DeterminantMetric},  we get 
\begin{align*}
&\displaystyle\int_\O \rho^{-\s}_\G |u_\e|^{2^*_\s} dy=\int_{\B_{r/\e}} |z|^{-s} w^{2^*_\s} \sqrt{|g_\e|}dx+ O\left( \int_{\B_{2r/\e}\setminus \B_{r/\e}}  |z|^{-\s} (\eta(\e x) w)^{2^*_\s} dx \right)\\
%
%
&\displaystyle= \int_{\B_{r/\e}} |z|^{-\s} w^{2^*_\s} dx+\e^2\frac{|\kappa(y_0)|^2}{2(N-1)} \int_{\B_{r/\e}} |z|^{2-\s} w^{2^*_\s} dx\\
&\quad +O\left(\e^3 \int_{\B_{r/\e}} |x|^3 |z|^{-\s} w^{2^*_\s} dx+   \int_{\B_{2r/\e}\setminus \B_{r/\e}}  |z|^{-\s}  w^{2^*_\s} dx \right)\\
&\displaystyle= 1+\e^2\frac{|\kappa(y_0)|^2}{2(N-1)} \int_{\B_{r/\e}} |z|^{2-\s} w^{2^*_\s} dx\\
&\quad +O\left(\e^3 \int_{\B_{r/\e}} |x|^3 |z|^{-\s} w^{2^*_\s} dx + \int_{\R^N \setminus \B_{r/\e}} |z|^{-\s} w^{2^*_\s} dx+ \int_{\B_{2r/\e}\setminus \B_{r/\e}}  |z|^{-\s}  w^{2^*_\s} dx\right).
\end{align*}
Using  \eqref{DecayEstimates111}, we have
$$
\e^3 \int_{\B_{r/\e}} |x|^3 |z|^{-\s} w^{2^*_\s} dx+\int_{\R^N \setminus \B_{r/\e}} |z|^{-\s} w^{2^*_\s} dx+\int_{\B_{2r/\e}\setminus \B_{r/\e}}  |z|^{-\s}  w^{2^*_\s} dx=O\left(\e^{N-\s}\right).
$$
Hence by Taylor expanding, we get
$$
\left(\int_\O \rho^{-\s}_\G |u_\e|^{2^*_\s} dx\right)^{2/2^*_\s} = 1+
\frac{\e^2}{2^*_\s}\frac{|\kappa(y_0)|^2}{(N-1)} \int_{\B_{r/\e}} |z|^{2-\s} w^{2^*_\s} dx+O\left(\e^{N-\s}\right).
$$
Finally, by \eqref{eq:expan-nabla-u-eps}, we conclude that
\begin{align*} 
J\left(u_\e\right)=  & S_{N,\s}+  \e^2\frac{|\kappa(y_0)|^2}{N-1} \int_{  \B_{r/\e}} |z|^2 \left|{\de_t w} \right|^2 dx+\e^2\frac{|\kappa(y_0)|^2}{2(N-1)} \int_{  \B_{r/\e}} |z|^2 |\n w|^2 dx\\
&  -\frac{\e^2}{2^*_\s} \frac{|\kappa(y_0)|^2}{(N-1)} S_{N,\s}\int_{\B_{r/\e}} |z|^{2-\s} w^{2^*_\s} dx+  \e^2  h(y_0) \int_{\B_{r/\e}} w^2 dx\\
&+O\left( \e^2  \int_{\B_{r/\e}} |h(F_{y_0}(\e x)- h(y_0)| w^2 dx\right)+O\left(\e^{N-2}\right).
\end{align*}
We thus get the desired result.

\end{proof}

%
%
%
%
%
%
%
%
%
\begin{proposition}\label{Proposition2}
  For $N\geq 5$, we define
$$
A_{N,\s}:= \frac{1}{N-1} \int_{  \R^N} |z|^2 \left|{\de_t w} \right|^2 dx+ \left( \frac{1}{2}-\frac{1}{2^*_\s} \right)\frac{1}{N-1} \int_{  \R^N} |z|^2 |\n w|^2 dx
  +\frac{1}{2^*_\s}   \int_{\R^N} w^2 dx >0
$$
and 
$$
B_{N,\s}:=  \int_{\R^N} w^2 dx.
$$
Assume that, for some $y_0 \in \G$, there holds
$$
\begin{cases}
\displaystyle  h(y_0)   <-\frac{A_{N,\s}}{{B_{N,\s}}} |\kappa(y_0)|^2& \qquad \textrm{for } N \geq 5\\
\displaystyle   h(y_0)<-\frac{3}{2}|\kappa(y_0)|^2 & \qquad \textrm{for } N=4.
\end{cases}
$$
Then
$$
\mu_h\left(\O,\G \right) < S_{N,\s}.
$$
\end{proposition}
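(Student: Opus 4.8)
The plan is to feed the concentrating family $(u_\e)_{\e>0}$ from \eqref{eq:TestFunction-w} into the expansion of Lemma \ref{Expansion} and to show that the assumption on $h(y_0)$ forces the leading $\e$-correction to $S_{N,\s}$ in \eqref{eq:expans-J-u-eps} to be strictly negative as $\e\to0$. Since $u_\e\in H^1_0(\O)\setminus\{0\}$ gives $\mu_h(\O,\G)\le J(u_\e)$, this yields $\mu_h(\O,\G)<S_{N,\s}$ at once. Everything therefore reduces to computing the sign of the coefficient of the leading term in \eqref{eq:expans-J-u-eps}, and the two regimes $N\ge5$ and $N=4$ must be treated separately, because the integrals $\int_{\R^N}w^2$ and $\int_{\R^N}|z|^2|\n w|^2$ are finite exactly when $N>4$ (by the decay in Corollary \ref{cor:dec-est-w}).

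For $N\ge5$ I would first use Corollary \ref{cor:dec-est-w} to see that all four integrals in \eqref{eq:expans-J-u-eps} converge over $\R^N$, so that $\int_{\B_{r/\e}}\to\int_{\R^N}$; moreover continuity of $h$ and dominated convergence make the remainder $\e^2\int_{\B_{r/\e}}|h(F_{y_0}(\e x))-h(y_0)|w^2\,dx$ equal to $o(\e^2)$, while $O(\e^{N-2})=o(\e^2)$. Thus $J(u_\e)=S_{N,\s}+\e^2\,\Xi+o(\e^2)$, with $\Xi$ the full-space value of the entire $\e^2$-bracket of \eqref{eq:expans-J-u-eps}. The decisive algebraic step is to turn $\Xi$ into $A_{N,\s}|\k(y_0)|^2+h(y_0)B_{N,\s}$: testing \eqref{ExpoEA} with $|z|^2w$ and integrating by parts in the $z$ variables gives the Pohozaev-type identity
\[
S_{N,\s}\int_{\R^N}|z|^{2-\s}w^{2^*_\s}\,dx=\int_{\R^N}|z|^2|\n w|^2\,dx-(N-1)\int_{\R^N}w^2\,dx,
\]
the factor $N-1$ being the divergence in $\R^{N-1}$ of the field $z\mapsto z$. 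Substituting it removes the $|z|^{2-\s}w^{2^*_\s}$ integral and collapses $\Xi$ precisely to $A_{N,\s}|\k(y_0)|^2+h(y_0)B_{N,\s}$. As $2^*_\s>2$ every summand of $A_{N,\s}$ is nonnegative (so $A_{N,\s}>0$) and $B_{N,\s}>0$, the hypothesis $h(y_0)<-A_{N,\s}B_{N,\s}^{-1}|\k(y_0)|^2$ makes this coefficient negative, and $J(u_\e)<S_{N,\s}$ for all small $\e$.

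For $N=4$ the same expansion holds, but now $\int_{\R^4}|z|^{2-\s}w^{2^*_\s}$ still converges whereas $\int_{\B_{r/\e}}w^2$, $\int_{\B_{r/\e}}|z|^2|\n w|^2$ and $\int_{\B_{r/\e}}|z|^2|\de_t w|^2$ each grow like $\log(1/\e)$, so the leading correction is now of order $\e^2\log(1/\e)$. The crucial ingredient is that the Kelvin transform behind Corollary \ref{cor:dec-est-w}(iii) gives an isotropic far field $w(x)=a_0|x|^{-2}+o(|x|^{-2})$, with $a_0>0$ the value at the origin of the Kelvin transform of $w$; differentiating this profile and computing the elementary angular integrals over $S^3$, e.g.\ $\int_{S^3}(1-\omega_0^2)\,d\sigma=\tfrac34|S^3|$ and $\int_{S^3}\omega_0^2(1-\omega_0^2)\,d\sigma=\tfrac18|S^3|$ (with $\omega_0$ the coordinate along the curve), produces the explicit $\log(1/\e)$-coefficient of each divergent integral. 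Assembling them, the correction takes the form $\e^2\log(1/\e)\,a_0^2|S^3|\bigl(C_{4,\s}|\k(y_0)|^2+h(y_0)\bigr)+O(\e^2)$ with the explicit constant $C_{4,\s}$ of the statement. The hard part will be exactly this last step: one must check that the genuinely $O(\e^2)$ contributions — the convergent $|z|^{2-\s}w^{2^*_\s}$ term, the $O(\e^{N-2})=O(\e^2)$ remainder, and especially the error $\e^2\int_{\B_{r/\e}}|h(F_{y_0}(\e x))-h(y_0)|w^2\,dx$ — do not overwhelm the $\e^2\log(1/\e)$ term. The last of these I would control by shrinking the chart radius $r$ so that the oscillation of $h$ on $F_{y_0}(\B_r)$ is small relative to the strictly negative gap $C_{4,\s}|\k(y_0)|^2+h(y_0)$, after which it is $o(\e^2\log(1/\e))$. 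Granting this, the hypothesis makes the $\e^2\log(1/\e)$-coefficient negative, so $J(u_\e)<S_{4,\s}$ for small $\e$, and the proof is complete.
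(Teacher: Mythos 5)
Your treatment of the case $N\ge5$ is essentially the paper's own proof: the same test family $(u_\e)$, the same Pohozaev-type identity obtained by testing \eqref{ExpoEA} against $|z|^2w$ (the paper carries this out with the cut-off $\eta_\e$ on $Q_{2r/\e}$ and discards the boundary terms via Corollary \ref{cor:dec-est-w}, which is the rigorous version of your full-space identity \eqref{eq:z-sqrt-w-2-star}), and the same collapse of the $\e^2$-coefficient to $A_{N,\s}|\k(y_0)|^2+B_{N,\s}h(y_0)$. That part is correct.

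The case $N=4$ contains a genuine gap. Your computation of the $\e^2\log(1/\e)$-coefficients rests on the isotropic far field $w(x)=a_0|x|^{-2}+o(|x|^{-2})$ together with the corresponding expansion of $\n w$, and neither is available. Corollary \ref{cor:dec-est-w}(iii) gives only upper bounds; the existence of a direction-independent limit $a_0=\lim_{|x|\to\infty}|x|^{2}w(x)$ would require continuity at the origin of the Kelvin transform $v$, which does not follow from standard $W^{2,p}$ regularity once $\s$ is close to $2$ (the right-hand side $|z|^{-\s}v^{2^*_\s-1}$ then fails to lie in $L^p$ for $p>N/2$). Worse, by Corollary \ref{cor:dec-est-w}(ii) the gradient of $v$ behaves like $|z|^{1-\s}$ near the axis, so $v$ is not $C^1$ across $\{z=0\}$ when $\s>1$, and the pointwise expansion $\n w(x)\approx-2a_0x|x|^{-4}$ is false near the axis; converting your angular integrals $\int_{S^3}(1-\omega_0^2)\,d\sigma$ and $\int_{S^3}\omega_0^2(1-\omega_0^2)\,d\sigma$ into actual coefficients of $\log(1/\e)$ would need an averaged version of these asymptotics that you have not established (compare Lemma \ref{lem:v-to-cR}, where even for $N=3$ the paper obtains such a limit only along a subsequence and only via B\^{o}cher's theorem). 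The paper sidesteps all of this: it bounds $|\de_t w|^2\le|\n w|^2$ so that the two curvature integrals combine into $\frac{3}{2(N-1)}\int_{\B_{r/\e}}|z|^2|\n w|^2$, and then applies the same identity \eqref{eq:z-sqrt-w-2-star} to rewrite this as $\frac32\int_{\B_{r/\e}}w^2$ plus a convergent term; the only divergent integral left is $\int_{\B_{r/\e}}w^2\asymp|\log\e|$, controlled by the two-sided bound \eqref{DecayEstimates111} alone. Had your asymptotics been justified they would in fact yield the sharper constant $\frac23$ in place of $\frac32$ (so the stated proposition would still follow), but as written the step is unsupported; either prove the averaged far-field asymptotics or adopt the paper's inequality-plus-identity route.
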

\begin{proof}
We claim that
 \begin{align}\label{eq:z-sqrt-w-2-star}
S_{N,\s} \int_{Q_{r/\e}}      |z|^{2-\s}  w^{2^*_\s} dx&= \int_{Q_{r/\e}}  |z|^2 |\n w|^2 dx-(N-1)\int_{Q_{r/\e}} w^2        dx +O(\e^{N-2}).
\end{align} 
To prove this claim, we let $\eta_\e(x)=\eta(\e x)$. 
We multiply \eqref{ExpoEA} by $|z|^2\eta_\e w$ and integrate by parts to get
\begin{align*}
\displaystyle S_{N,\s} \int_{Q_{2r/\e}}    \eta_\e  |z|^{2-\s}  w^{2^*_\s} dx\displaystyle&=\int_{Q_{2r/\e}} \n w \cdot \n \left(\eta_\e  |z|^2 w\right) dx\\
&\displaystyle= \int_{Q_{2r/\e}} \eta_\e  |z|^2 |\n w|^2 dx+\frac{1}{2} \int_{Q_{2r/\e}} \n w^2 \cdot \n \left(|z|^2\eta_\e \right) dx\\
&\displaystyle=\int_{Q_{2r/\e}} \eta_\e  |z|^2 |\n w|^2 dx-\frac{1}{2} \int_{Q_{2r/\e}} w^2 \D\left(|z|^2\eta_\e \right) dx\\
&\displaystyle=\int_{Q_{2r/\e}} \eta_\e  |z|^2 |\n w|^2 dx-(N-1)\int_{Q_{2r/\e}} w^2 \eta_\e      dx\\
&\displaystyle\quad-\frac{1}{2} \int_{Q_{2r/\e}\setminus Q_{r/\e}} w^2 (|z|^2\D\eta_\e+ 4\n \eta_\e\cdot z) dx.
\end{align*} 
We then deduce that 
\begin{align*}
S_{N,\s} \int_{Q_{r/\e}}      |z|^{2-\s}  w^{2^*_\s} dx&= \int_{Q_{r/\e}}  |z|^2 |\n w|^2 dx-(N-1)\int_{Q_{r/\e}} w^2        dx\\
&+O\left( \int_{Q_{2r/\e}\setminus Q_{r/\e}}      |z|^{2-\s}  w^{2^*_\s} dx+  \int_{Q_{2r/\e}\setminus  Q_{r/\e}}  |z|^2 |\n w|^2 dx+ \int_{Q_{2r/\e}\setminus  Q_{r/\e}}     w^2 dx     \right)\\
&+ O\left(  \e \int_{Q_{2r/\e}\setminus  Q_{r/\e}}  |z|  |\n w| dx+ \e^2 \int_{Q_{2r/\e}\setminus  Q_{r/\e}}   |z|^2   w^2 dx     \right).
\end{align*} 
Thanks to Corollary \ref{cor:dec-est-w}, we  get \eqref{eq:z-sqrt-w-2-star} as claimed.

Next, by the continuity of  $h$, for $\delta>0$, we can find  $r_\d>0$ such that
\be \label{eq:cont-h-eff}
|h(y)-h(y_0)|<\d \qquad \textrm{for ever $y\in F\left(\B_{r_\d}\right)$ }.
\ee

\noindent
\textbf{Case $N\geq 5.$}\\
\noindent
Using  \eqref{eq:z-sqrt-w-2-star} and \eqref{eq:cont-h-eff}  in   \eqref{eq:expans-J-u-eps},  we obtain, for every $r\in (0,r_\d)$
\begin{align*}
J\left(u_\e\right)=  & S_{N,\s}+  \e^2\frac{|\kappa(y_0)|^2}{N-1} \int_{  \R^N} |z|^2 \left|{\de_t w} \right|^2 dx+\e^2\left( \frac{1}{2}-\frac{1}{2^*_\s} \right)\frac{|\kappa(y_0)|^2}{N-1} \int_{  \R^N} |z|^2 |\n w|^2 dx\\
&  +\frac{\e^2}{2^*_\s} {|\kappa(y_0)|^2}  \int_{\R^N} w^2 dx+  \e^2  h(y_0) \int_{\R^N} w^2 dx+O\left( \e^2 \d^2 \int_{\R^N}   w^2 dx\right)+O\left(\e^{N-2}\right),
\end{align*}
where we have used Corollary \ref{cor:dec-est-w} to get the estimates
$$
\int_{  \R^N\setminus \B_{r/\e}} |z|^2 |\n w|^2 dx+   \int_{\R^N\setminus \B_{r/\e}} w^2 dx=O(\e).
$$
It follows that, for every $r\in (0,r_\d)$,
\begin{align*}
J\left(u_\e\right)=  S_{N,\s}+\e^2\left\{ A_{N,\s}    |\kappa(y_0)|^2+ B_{N,\s} h(y_0) \right\}+O(\d\e^2   B_{N,\s})+O\left(\e^{3}\right).
\end{align*}
Suppose now that 
$$
A_{N,\s}    |\kappa(y_0)|^2+ B_{N,\s} h(y_0) <0.
$$
We can thus    choose respectively  $\delta>0$ small and    $\e>0$ small so that  $ J(u_\e)< S_{N,\s}$. Hence we get
$$
\mu_h\left(\O ,\G\right)< S_{N,\s}.
$$
\noindent
\textbf{Case $N=4.$}\\
\noindent
From \eqref{eq:expans-J-u-eps} and \eqref{eq:cont-h-eff}, we estimate, for every $r\in (0,r_\d)$
\begin{align*}
J\left(u_\e\right)\leq   & S_{N,\s}+  \e^2\frac{3|\kappa(y_0)|^2}{2(N-1)} \int_{  \B_{r/\e}} |z|^2 |\n w|^2 dx -\frac{\e^2}{2^*_\s} \frac{|\kappa(y_0)|^2}{(N-1)} S_{N,\s}\int_{\B_{r/\e}} |z|^{2-\s} w^{2^*_\s} dx\\
&+  \e^2  h(y_0) \int_{\B_{r/\e}} w^2 dx +O\left( \e^2  \d \int_{\B_{r/\e}}  w^2 dx\right)+O\left(\e^{N-2}\right). \nonumber
\end{align*}
This with \eqref{eq:z-sqrt-w-2-star} yield
\begin{align*}
J\left(u_\e\right)    & \leq S_{N,\s}+  \e^2\frac{3|\kappa(y_0)|^2}{2 (N-1)}S_{N,\s}\int_{\B_{r/\e}} |z|^{2-\s} w^{2^*_\s} dx -\frac{\e^2}{2^*_\s} \frac{|\kappa(y_0)|^2}{(N-1)} S_{N,\s}\int_{\B_{r/\e}} |z|^{2-\s} w^{2^*_\s} dx\\
&+  \e^2 \left\{ \frac{3}{2}  |\kappa(y_0)|^2+h(y_0) \right\} \int_{\B_{r/\e}} w^2 dx+O\left( \e^2  \d \int_{\B_{r/\e}}  w^2 dx\right)+O\left(\e^{N-2}\right).
\end{align*}
Since, by \eqref{DecayEstimates111},  
$$
\int_{\B_{r/\e}} |z|^{2-\s} w^{2^*_\s} dx=O(1),
$$
we therefore have 
\begin{align*}
J\left(u_\e\right)    & \leq S_{4,\s}  + \e^2 \left\{ \frac{3 |\kappa(y_0)|^2}{2} +h(y_0) \right\} \int_{\B_{r/\e}} w^2 dx+O\left( \e^2  \d \int_{\B_{r/\e}}  w^2 dx\right)+C\e^{2} ,.
\end{align*}
for some positive constant $C$ independent on $\e$.
By \eqref{DecayEstimates111}, we have that
$$
\begin{array}{ll}
\displaystyle \int_{\B_{r/\e}} \frac{C_1^2}{1+|x|^2} dx \leq \int_{\B_{r/\e}} w^2 dx \leq \int_{\B_{r/\e}} \frac{C_2^2}{1+|x|^2} dx,
\end{array}
$$
so that 
\begin{equation}\label{EtAlors0}
\begin{array}{ll}
\displaystyle \int_{B_{\R^4}(0,r/\e)} \frac{C_1^2}{\left(1+|x|^2\right)^2} dx \leq \int_{\B_{r/\e}} w^2 dx \leq \int_{B_{\R^4}(0,2r/\e)} \frac{C_2^2}{\left(1+|x|^2\right)^2} dx.
\end{array}
\end{equation}
Using polar coordinates and  a change of variable, for $R>0$, we have
$$
\begin{array}{ll}
\displaystyle \int_{B_{\R^4}(0,R)} \frac{dx}{\left(1+|x|^2\right)^2} dx
&\displaystyle= |S^3| \int_0^{R} \frac{t^3}{\left(1+t^2\right)^2} dt\\
&\displaystyle= |S^3| \int_0^{\sqrt{R}} \frac{s}{2\left(1+s\right)^2} ds\\
&\displaystyle=\frac{|S^3|}{2}\left(\log\left(1+\sqrt{R}\right)-\frac{\sqrt{R}}{1+\sqrt{R}}\right).
\end{array}
$$
Therefore, there exist numerical constants $c,\ov c>0$ such that for every  $\e>0$ small, we have  
\begin{equation}\label{EtAlors1}
 c   |\log  \e |\leq   \int_{\B_{r/\e}} w^2 dx \leq \ov c   |\log  \e |.
\end{equation}
Now we assume that  $ \frac{3|\kappa(y_0)|^2}{2}+ h(y_0)<0$. 
Therefore by Lemma \ref{Expansion}  and \eqref{EtAlors1}, we get  
$$
J\left(u_\e\right)\leq S_{4,s}+c \left\{ \frac{3}{2}  |\kappa(y_0)|^2+h(y_0)\right\}\e^2 |\log \e|+ \ov c \d\e^2  |\log \e|+ C \e^{2} .
$$
 Then choosing $\d>0$ small and     $\e$ small, respectively, we deduce that  $\mu_h\left(\O,\G\right)\leq J(u_\e)< S_{4,\s}$.
This ends the proof of the proposition.
\end{proof}
\begin{proof}[Proof of Theorem \ref{th:main1} (completed)]
Thanks to  Proposition \ref{Proposition2},  we can apply Proposition \ref{Pro3333} to get the result with $C_{N,\s}=\frac{A_{N,\s}}{B_{N,\s}}$ for $N\geq 5$ and $C_{4,\s}=\frac{3}{2}$.

\end{proof}
\section{Existence of minimizer for $\mu_h(\O,\G)$ in dimension three}\label{s:3D-case}
We consider the function
$$
\cR:\R^3\setminus\{0\}\to \R,    \qquad x\mapsto  \cR(x)=\frac{1}{|x|}
$$
which  satisfies
\begin{equation}\label{eq:Green-R3-3D}
-\D \cR=0 \qquad \textrm{ in $\R^3\setminus\{0\}$. }  
\end{equation}
We denote by $G$ the solution to the equation 
\begin{equation}\label{eq:Green-3D}
\begin{cases}
-\D_x G(y, \cdot)+h G(y,\cdot)=0& \qquad \textrm{  in $\O\setminus \{y\}$. }  \\
G(y,\cdot )=0&   \qquad \textrm{  on $\de \O $, }
\end{cases}
\end{equation}
and satisfying
\be\label{eq:expand-Green-trace}
G(x,y)=  \cR(x-y)+O(1)\qquad\textrm{ for $x, y\in \O$ and $x\not= y$.}
\ee
We note that $G$ is proportional to the Green function of $-\D+h$ with zero Dirichlet data.\\
We let $\chi\in C^\infty_c(-2,2)$ with $\chi \equiv 1$ on $(-1,1)$ and $0\leq \chi<1$. For $r>0$,   we  consider the cylindrical symmetric cut-off function
\be\label{eq:def-cut-off-cylind} 
\eta_r(t,z)=\chi\left(\frac{|t|+|z|}{r} \right) \qquad \qquad\textrm{ for every  $(t,z)\in \R\times \R^2$}.
\ee
It is clear that 
$$
\eta_r\equiv 1\quad \textrm{ in $\B_r$},\qquad \eta_r\in H^1_0({Q}_{2r}),\qquad |\n \eta_r|\leq  \frac{C}{r} \quad\textrm{ in $\R^3$}.
$$
For $y_0\in \O$, we let  $r_0\in (0,1)$  such that   
\be\label{eq:def-r0} 
y_0+ Q_{2r_0}\subset\O. 
 \ee  
We define the function $M_{y_0}: Q_{2r_0}\to \R$ given by
\begin{equation}\label{C9}
M_{y_0}(x):=  G (y_0,x+y_0)-{\eta_r}(x)\frac{1}{|x|}     \qquad \textrm{ for every $x\in Q_{2r_0}$}.
\end{equation}
 It follows from  \eqref{eq:expand-Green-trace} that  $M_{y_0}\in  L^\infty(Q_{r_0})$. By \eqref{eq:Green-3D} and \eqref{eq:Green-R3-3D}, 
 $$
|-\D {M}_{y_0}(x)+h(x) {M}_{y_0}(x)|\leq \frac{C}{|x|}= C \cR(x) \qquad \textrm{ for every  $x\in Q_{r_0}$},
 $$
 whereas $\cR\in L^p( Q_{r_0})$ for every $p\in (1,3)$. Hence by
elliptic regularity theory, $M_{y_0}\in W^{2,p}(Q_{r_0/2})$ for every $p\in (1,3)$. Therefore by Morrey's embdding theorem, we deduce that 
\be \label{eq:regul-beta}
\|M_{y_0}\|_ {C^{1,\varrho}(Q_{r_0/2})}\leq C \qquad \textrm{ for every $\varrho\in (0,1)$.}
\ee
In view of \eqref{eq:expans-Green}, the mass of the operator $-\D+h$ in $\O$ at the point $y_0\in   \O$ is given by  
\be \label{eq:def-mass}
  \textbf{m}(y_0)={M}_{y_0}(0).
\ee
We recall that the positive  ground state solution $w$  
satisfies 
\begin{equation}\label{eq:w-gorund-3D}
-\D w=S_{3,\s}|z|^{-\s} w^{2^*_\s-1} \qquad \textrm{in } \R^3,\qquad \int_{\R^3}|z|^{-\s} w^{2^*_\s}dx=1,
\end{equation}
where $x=(t,z)\in \R \times \R^{2}$. In addition by \eqref{DecayEstimates111}, we have 
\begin{equation}\label{eq:up-low-bound-w-3D}
\frac{C_1}{1+|x|} \leq w(x) \leq \frac{C_2}{1+|x|} \qquad \textrm{ for every $x\in   \R^3$.}
\end{equation}
The following result     will be crucial in the following of    this section.
\begin{lemma}\label{lem:v-to-cR}
Consider the function $v_\e:  \R^3\setminus\{0\}\to \R$ given by 
$$
v_\e(x)= \e^{-1} w\left(\frac{x}{\e}\right).
$$
Then there exists a   constant $\textbf{c}>0$ and a sequence $(\e_n)_{n\in \N}$    (still denoted by $\e$)  such that 
$$
v_\e (x) \to  \frac{\textbf{c}}{|x|}  \qquad \textrm{ and } \qquad \n v_\e (x) \to -\textbf{c}  \frac{x}{|x|^3} \qquad \textrm{ for  all most every  $x\in \R^3 $ } 
$$
and 
\be\label{eq:nv-eps-to-nv-C1}
v_\e (x) \to  \frac{\textbf{c}}{|x|}  \qquad \textrm{ and } \qquad  \n v_\e (x) \to    -\textbf{c}  \frac{x}{|x|^3} \qquad \textrm{ for  every   $x\in \R^3\setminus\{z=0\}$.  } 
\ee

\end{lemma}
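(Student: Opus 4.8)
The plan is to read $v_\e$ as an almost-harmonic rescaling of the ground state $w$ and to identify its limit by combining elliptic compactness with a Liouville-type argument. First I would write down the equation satisfied by $v_\e$. Rescaling \eqref{eq:w-gorund-3D} and using that $2^*_\s=6-2\s$ in dimension three, a direct computation gives
\be
-\D v_\e = S_{3,\s}\,\e^{2-\s}\,|z|^{-\s}\, v_\e^{2^*_\s-1}\qquad\textrm{in } \R^3\setminus\{z=0\},
\ee
and the crucial point is that the prefactor $\e^{2-\s}\to 0$ since $\s\in(0,2)$, so $v_\e$ is asymptotically harmonic. Moreover, rescaling the two-sided estimate \eqref{eq:up-low-bound-w-3D} yields
\be
\frac{C_1}{\e+|x|}\le v_\e(x)\le \frac{C_2}{\e+|x|}\qquad\textrm{for every } x\in\R^3,
\ee
so $v_\e$ is uniformly bounded, above and below, on every compact subset of $\R^3\setminus\{z=0\}$ (such a subset automatically avoids the origin, which lies on $\{z=0\}$).

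Next I would set up the compactness. Fix a compact set $K\subset\subset\R^3\setminus\{z=0\}$; there the weight $|z|^{-\s}$ is bounded and $v_\e$ is bounded, so the right-hand side of the equation is $O(\e^{2-\s})$ uniformly on $K$. Interior elliptic estimates then give a uniform $W^{2,p}$ bound, hence by Morrey's embedding a uniform $C^{1,\varrho}$ bound on $K$. By Arzel\`a--Ascoli and a diagonal exhaustion of $\R^3\setminus\{z=0\}$, I extract a sequence $\e_n\to 0$ along which $v_{\e_n}\to V$ in $C^1_{loc}(\R^3\setminus\{z=0\})$. Passing to the limit in the weak formulation and using $\e_n^{2-\s}\to 0$ to kill the right-hand side shows that $V$ is weakly, hence classically, harmonic off the line $\{z=0\}$, while the two-sided bound passes to the limit to give $\tfrac{C_1}{|x|}\le V\le\tfrac{C_2}{|x|}$ on $\R^3\setminus\{0\}$; in particular $V$ is positive there and bounded near each point of $\{z=0\}\setminus\{0\}$.

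It remains to identify $V$. The line $\{z=0\}$ is, locally, a segment, which has zero Newtonian capacity in $\R^3$ (an explicit logarithmic cut-off makes the Dirichlet energy tend to $0$); since $V$ is bounded and harmonic in a punctured neighborhood of each point of $\{z=0\}\setminus\{0\}$, the line is a removable singularity and $V$ extends to a harmonic function on $\R^3\setminus\{0\}$. Now I apply the Kelvin transform $\widetilde V(x):=|x|^{-1}V(x/|x|^2)$, which is again harmonic on $\R^3\setminus\{0\}$; the bound $\tfrac{C_1}{|x|}\le V\le\tfrac{C_2}{|x|}$ becomes $C_1\le \widetilde V\le C_2$, so $\widetilde V$ is a bounded harmonic function with an isolated, hence removable, singularity at the origin, and Liouville's theorem forces $\widetilde V\equiv \textbf{c}$ for some constant $\textbf{c}\in[C_1,C_2]$. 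Transforming back gives $V(x)=\textbf{c}/|x|$ with $\textbf{c}>0$. Finally, the $C^1_{loc}$ convergence yields $v_{\e_n}(x)\to \textbf{c}/|x|$ and $\n v_{\e_n}(x)\to \n(\textbf{c}/|x|)=-\textbf{c}\,x/|x|^3$ for every $x\in\R^3\setminus\{z=0\}$, and since $\{z=0\}$ is a null set these limits also hold for almost every $x\in\R^3$, which is exactly \eqref{eq:nv-eps-to-nv-C1}.

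The main obstacle is that in dimension three $w$ is neither radial nor explicit, so $\lim_{R\to\infty}R\,w(R\o)$ cannot be computed directly and may even depend on the direction $\o$; this is precisely why one must pass to a subsequence and why the identification of the limit has to be \emph{forced} by harmonicity, via Kelvin and Liouville, rather than obtained by a pointwise computation. The secondary technical point is the singular weight $|z|^{-\s}$, which obstructs uniform elliptic estimates across the line $\{z=0\}$ and is handled by the removable-singularity argument above.
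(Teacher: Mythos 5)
Your proof is correct and follows the same overall skeleton as the paper's: the rescaled equation $-\D v_\e=S_{3,\s}\e^{2-\s}|z|^{-\s}v_\e^{2^*_\s-1}$, the uniform two-sided bound $\frac{C_1}{\e+|x|}\le v_\e\le\frac{C_2}{\e+|x|}$, local compactness away from the singular line, extraction of a subsequence converging in $C^1_{loc}(\R^3\setminus\{z=0\})$, harmonicity of the limit, and identification of the limit as $\textbf{c}/|x|$ from the two-sided bound. Two sub-steps are implemented differently. For compactness, the paper simply quotes the $C^2_{loc}(\R^3\setminus\{z=0\})$ bounds coming from Corollary \ref{cor:dec-est-w}, whereas you re-derive them from interior $W^{2,p}$ estimates and Morrey embedding; both work. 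For harmonicity and identification, the paper tests the equation directly against $\varphi\in C^\infty_c(\R^3\setminus\{0\})$ --- the bound $f_\e\le C|z|^{-\s}|x|^{-5+2\s}$ is locally integrable across $\{z=0\}$ since $\s<2$, so dominated convergence gives $\D v=0$ in $\cD'(\R^3\setminus\{0\})$ in one stroke --- and then invokes B\^ocher's theorem together with the two-sided bound. You instead first obtain harmonicity off the line, remove the line by a zero-capacity argument (legitimate: a segment has finite one-dimensional Hausdorff measure, hence zero Newtonian capacity in $\R^3$, and $V$ is locally bounded there), and identify the limit via Kelvin transform, removable isolated singularity and Liouville. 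Your route avoids citing B\^ocher at the cost of a capacity-removability lemma; the paper's distributional argument is slightly more economical because the integrability of the weight makes any removability discussion unnecessary. Both arguments are complete and yield the same conclusion, including the positivity of $\textbf{c}$ from the lower bound.
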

\begin{proof}
By Corollary \ref{cor:dec-est-w}, we have that  $(v_\e)$ is bounded in $C^2_{loc}(\R^3\setminus\{z=0\})$. Therefore by Arzel\'a-Ascolli's theorem $v_\e$ converges to $v$ in $C^1_{loc}(\R^3\setminus\{z=0\})$. In particular,
$$
v_\e \to v  \qquad \textrm{ and } \qquad \n v_\e \to \n v \qquad\textrm{ almost every where on $\R^3$.}
$$  
It is plain, from \eqref{eq:up-low-bound-w-3D}, that 
\be\label{eq:up-low-bound-v-eps-3D}
 0<\frac{C_1}{\e+|x|} \leq v_\e(x) \leq \frac{C_2}{\e+ |x|} \qquad \textrm{ for almost every $x\in   \R^3 $.}
\ee
By \eqref{eq:w-gorund-3D}, we have
\begin{equation}\label{eq:eq-for-v-eps}
-\D v_{\e}(x)={\e}^{2-\s} f_{\e}(x) \qquad \textrm{ in } \R^3,
\end{equation}
where
$$
f_{\e}(x)= S_{3,\s} |z|^{-\s} v_{\e}^{2^*_\s-1}(x)\leq  C |z|^{-\s} |x|^{-5+2\s}\qquad \textrm{ for almost  every $x=(t,z)\in \R^3 $. }
$$
We let $\varphi \in C^\infty_c\left(\R^3\setminus \lbrace 0\rbrace\right)$. We multiply \eqref{eq:eq-for-v-eps} by $\varphi$ and integrate by parts to get
$$
-\int_{\R^3} v_{\e} \D \varphi dx= \e^{2-\s} \int_{\R^3} f_{\e}(x) \varphi (x) dx.
$$
By \eqref{eq:up-low-bound-v-eps-3D} and the dominated convergence theorem, we can pass to the limit in the above identity and deduce that 
$$
\Delta v=0 \qquad \quad\textrm{ in } \calD^\prime\left(\R^3\setminus \lbrace 0\rbrace\right).
$$
In particular $v$ is equivalent to a function of class $C^\infty\left(\R^3 \setminus \lbrace0\rbrace\right)$ which is still denoted by $v$. Thanks to \eqref{eq:up-low-bound-v-eps-3D}, by B\^{o}cher's theorem, there exists a constant $\textbf{c}>0$  such that
$$
v(x)=\frac{\textbf{c}}{|x|}.
$$
The proof of the lemma is thus finished.
\end{proof}

%
%
%
%
We start by recording  some useful estimates.
\begin{lemma}\label{lem:estimates-for-3D}
There exists a constant $C>0$ such that for every  $\e,r\in (0,r_0/2)$, we have 
\be\label{eq:est-nw-sq-3D}
  \int_{  \B_{r/\e}}   |\n w |^2 dx\leq C\max\left(1, \frac{\e}{r}\right),\qquad    \int_{  \B_{r/\e}}   | w |^2 dx\leq C \max\left(1 , \frac{r}{\e} \right), 
\ee
\be \label{eq:est-wnw-sq-3D} 
   \int_{  \B_{r/\e}}  w  |\n w|   dx\leq C \max\left(  1, \log\frac{r}{\e}\right),
\ee
\be \label{eq:est-nw-3D}
  \int_{ \B_{r/\e}}   |\n  w| dx \leq C \max\left(1 , \frac{r}{\e} \right), \qquad   \int_{ \B_{r/\e}}   |  w| dx \leq  C \max\left(1 , \frac{r^2}{\e^2} \right)   
\ee
%
and 
\be\label{eq:est-L2-star-3D}
  \e^2 \int_{\B_{r/\e}} |z|^{-\s}|x|^2  w ^{2^*_\s}  dx+\e   \int_{\B_{4r/\e}\setminus \B_{r/\e}} |z|^{-\s}  w ^{2^*_\s-1}  dx+  \int_{\R^3\setminus \B_{r/\e}} |z|^{-\s}  w ^{2^*_\s}  dx  \leq  C r^{\s-3} \e^{3-\s}.
\ee
\end{lemma}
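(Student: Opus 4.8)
The plan is to reduce all five displayed bounds to the pointwise information already available — the two-sided estimate \eqref{eq:up-low-bound-w-3D} and the gradient/Hessian decay of Corollary \ref{cor:dec-est-w} — and to evaluate the resulting integrals by splitting $\B_{r/\e}$ (and, for the last estimate, its complement) into the unit-scale core $\{|x|\le1\}$ and the far region $\{|x|\ge1\}$. On the core, $w$ and $w^{2^*_\s}$ are bounded and $|\n w|\le C|z|^{1-\s}$, so — using only $\s<2$ — every integrand is integrable and contributes an $O(1)$ term; all the $r$- and $\e$-dependence comes from the far (and exterior) regions. Since $2^*_\s=2(3-\s)$ and $w\simeq|x|^{-1}$ there, each integrand is a negative power of $|x|$, possibly carrying the singular weight $|z|^{-\s}$. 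For the weighted integrals I would pass to coordinates $x=(t,z)$, $\rho=|z|$, $dx=2\pi\,\rho\,d\rho\,dt$, and then to polar coordinates $t=s\cos\a$, $\rho=s\sin\a$; this turns $|z|^{-\s}|x|^{b}$ into $s^{\,b-\s+2}(\sin\a)^{1-\s}$, with $\int_0^\pi(\sin\a)^{1-\s}\,d\a<\infty$ precisely because $\s<2$, so the estimate collapses to a radial integral $\int s^{p}\,ds$.

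For the first bound in \eqref{eq:est-nw-sq-3D} I would bypass the pointwise estimate entirely: as $w\in\cD^{1,2}(\R^3)$, $\int_{\B_{r/\e}}|\n w|^2\le\int_{\R^3}|\n w|^2=S_{3,\s}=O(1)\le C\max(1,\e/r)$. This is in fact needed, because near the $t$-axis the bound $|\n w|^2\le C|z|^{-2\s}|x|^{-4}$ fails to be integrable once $\s\ge1$. For the other four bounds I would split the far region into the slab $\{|z|\le1\}$ and its complement; on the slab the transverse integral $\int_{\{|z|\le1\}}|z|^{-\s}\,dz$ is finite and the $t$-integral converges, contributing $O(1)$, while off the slab the weight disappears and ordinary spherical coordinates in $\R^3$ give radial integrals $\int_1^{r/\e}s^{p}\,ds$ with (after the volume factor) $p=0$ for $w^2$ and for $|\n w|$, hence $O(r/\e)$; $p=-1$ for $w|\n w|$, hence $O(\log(r/\e))$; and $p=1$ for $w$, hence $O(r^2/\e^2)$. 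Adding the core contribution yields the claimed $\max(1,\cdot)$ right-hand sides.

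The last estimate \eqref{eq:est-L2-star-3D} I would handle most carefully, keeping the weight $|z|^{-\s}$ throughout (discarding it on $\{|z|\ge1\}$ would spuriously produce divergence as $\s\to2$). Using $w^{2^*_\s}\le C|x|^{2\s-6}$ on $\{|x|\ge1\}$, the polar reduction gives radial exponents $\s-4$, $\s-3$ and $\s-2$ for the exterior, shell and weighted-interior terms respectively. The exterior term is decisive and sets the target rate: $\int_{\R^3\setminus\B_{r/\e}}|z|^{-\s}w^{2^*_\s}\le C\int_{r/\e}^{\infty}s^{\s-4}\,ds=C(r/\e)^{\s-3}=Cr^{\s-3}\e^{3-\s}$ (and for $r/\e<1$ it is instead $\le\int_{\R^3}|z|^{-\s}w^{2^*_\s}=1\le(\e/r)^{3-\s}$). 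The shell term equals $\e\int_{r/\e}^{4r/\e}s^{\s-3}\,ds\simeq r^{\s-2}\e^{3-\s}$ and is absorbed because $r\le1$, while the interior term reduces to $\e^2\int_1^{r/\e}s^{\s-2}\,ds$, which for $\s\in(1,2)$ is $O(r^{\s-1}\e^{3-\s})$ and thus within the target.

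The main obstacle — and the only genuinely delicate point — is this weighted-interior term together with the bookkeeping of $|z|^{-\s}$ against the anisotropic decay $|\n w|\le C\max(1,|z|^{-\s})|x|^{1-N}$ in the far field. The exponent $\s-2$ of its radial integral crosses $-1$ exactly at the scale-critical value $\s=1$, where $\int_1^{r/\e}s^{\s-2}\,ds$ becomes logarithmic; this is the regime in which the comparison of the interior term with the exterior rate $r^{\s-3}\e^{3-\s}$ is tightest, and where one must be most careful about the interplay of $\e$ and $r$. Once the thin axial slab is shown to be harmless and this critical exponent is dealt with, everything else is a sequence of elementary one-dimensional power integrals.
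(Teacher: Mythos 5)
Your reconstruction is essentially the intended one: the paper gives no proof of this lemma beyond the remark that it ``uses only the estimates in Corollary~\ref{cor:dec-est-w}'', and that is exactly what you do. Your treatment of \eqref{eq:est-nw-sq-3D}, \eqref{eq:est-wnw-sq-3D} and \eqref{eq:est-nw-3D} is correct, including the genuinely necessary observation that the first bound must come from $\int_{\R^3}|\n w|^2=S_{3,\s}$ rather than from the pointwise far-field estimate, whose square is not integrable across the axis $\{z=0\}$ once $\s\ge1$. The polar reduction $|z|^{-\s}|x|^{b}\,dx\mapsto C\,s^{\,b+2-\s}(\sin\a)^{1-\s}\,ds\,d\a$ with $\int_0^\pi(\sin\a)^{1-\s}d\a<\infty$ for $\s<2$, and the resulting radial exponents $\s-4$, $\s-3$, $\s-2$ for the three pieces of \eqref{eq:est-L2-star-3D}, are all correct, as are the exterior and shell estimates.

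The gap is exactly at the point you flag as ``delicate'' and then defer. For the first summand of \eqref{eq:est-L2-star-3D} your reduction to $\e^2\int_1^{r/\e}s^{\s-2}\,ds$ silently discards the core contribution, which is bounded \emph{below}:
\[
\e^2\int_{\B_{r/\e}}|z|^{-\s}|x|^2w^{2^*_\s}\,dx\;\ge\;\e^2\int_{\B_1}|z|^{-\s}|x|^2w^{2^*_\s}\,dx\;=\;c_0\,\e^2,\qquad c_0>0 .
\]
Comparing with the claimed right-hand side, $\e^2\big/\big(r^{\s-3}\e^{3-\s}\big)=r^{3-\s}\e^{\s-1}\to+\infty$ as $\e\to0$ for fixed $r$ whenever $\s<1$; at $\s=1$ the same failure occurs through the far-field contribution $\e^2\log(r/\e)$ versus $r^{-2}\e^2$. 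So for $\s\in(0,1]$ the inequality \eqref{eq:est-L2-star-3D} is not merely hard to establish by your method --- it is false as stated, and no completion of your argument exists. Your verification is valid only for $\s\in(1,2)$, where $\e^{\s-1}\le 1\le r^{\s-3}$ absorbs both the core term and the far-field term. The repair is to replace the right-hand side of \eqref{eq:est-L2-star-3D} by $C\big(\e^{2}+r^{\s-3}\e^{3-\s}\big)$; this is what the computation actually yields for every $\s\in(0,2)$, and it is still of the form $\calO_r(\e)$, which is all that is used of this estimate in Lemma~\ref{lem:expans-num-3D} and Lemma~\ref{lem:expans-denom-3D}. As written, your proposal asserts that the critical exponent ``is dealt with'' without dealing with it; you should either prove the corrected statement or explicitly restrict the claim to $\s\in(1,2)$ and record the discrepancy.
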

\begin{proof}
The proof of this lemma is not difficult and uses only the estimates in Corollary \ref{cor:dec-est-w}. We therefore skip the details. 
%
%

\end{proof}
  
\subsection{Proof of Theorem \ref{th:main2}}\label{ss:proof-of-th-3D}

Given $y_0\in \G\subset\O\subset \R^3$, we let $r_0$ as defined in \eqref{eq:def-r0}. For    $r\in (0, r_0/2)$,  we consider $F_{y_0}: Q_r\to \O$ (see Section \ref{s:Geometric-prem}) parameterizing a neighborhood of $y_0$ in $\O$, with the property that $F_{y_0}(0)=y_0$.
For   $\e>0$,  we consider  $u_\e: \O \to  \R$ given  by
$$
u_\e(y):=\e^{-1/2} \eta_r(F^{-1}_{y_0}(y)) w \left(\frac{F^{-1}_{y_0}(y)}{\e} \right).
$$
We can now define the test function $\Psi_\e:\O\to \R$ by
\be 
\label{eq:TestFunction-Om-3D}
\Psi_\e\left(y\right)=u_\e(y)+\e^{1/2}  \textbf{c}\,  \eta_{2r}(F^{-1}_{y_0}(y) ){M}_{y_0}(F^{-1}_{y_0}(y) ).
\ee
It is plain that $\Psi_\e\in H^1_0(\O)$ and 
$$
\Psi_\e\left(F_{y_0}(x)\right)=\e^{-1/2} \eta_r(x) w \left(\frac{x}{\e} \right)+\e^{1/2}  \textbf{c} \,  \eta_{2r}(x) {M}_{y_0}(x) \qquad\textrm{ for every $x\in \R^N$.}
$$
The main result of this section is contained in the following
\begin{proposition}\label{Expansion-no-h}
Let    $(\e_n)_{n\in \N}$ and $\textbf{c}$ be the sequence and the number given by Lemma \ref{lem:v-to-cR}. Then there exists  $r_0,n_0>0$ such that for every $r\in (0,r_0)$ and $n\geq n_0$
 \begin{align*}
\displaystyle  J(\Psi_\e):= \frac{\displaystyle  \int_\O |\n \Psi_{\e_n}|^2 dy  +\int_\O h | \Psi_{\e_n}|^2 dy   }{\displaystyle\left( \int_{\O} \rho^{-\s}_\G |\Psi_{\e_n}|^{2^*_\s} dy \right)^{\frac{2}{2^*_\s}}}
&=  S_{3,\s}-  \e_n \pi^2 \textbf{m}(y_0)    \textbf{c}^2+\calO_r(\e_n) ,
\end{align*}
for some numbers   $\calO_r(\e_n)$ satisfying    
$$
\lim_{r\to 0}\lim_{n\to \infty}  \e^{-1}_n \calO_r(\e_n)=0.
$$
\end{proposition}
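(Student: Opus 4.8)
The plan is to expand the numerator and denominator of $J(\Psi_\e)$ separately, using the decomposition of $\Psi_\e$ into the concentrating part $u_\e$ (which recovers $S_{3,\s}$) and the correction term $\e^{1/2}\textbf{c}\,\eta_{2r}M_{y_0}$ (which will produce the mass term). The key point, as in Schoen's argument, is that the cross term between these two pieces is where the mass $\textbf{m}(y_0)$ enters at the critical order $\e$. Let me organize the computation around this.

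Let me think about the numerator first. We have $\int_\O |\n\Psi_\e|^2 + h\Psi_\e^2\,dy$. Writing $\Psi_\e = u_\e + \e^{1/2}\textbf{c}\,\Phi_\e$ where $\Phi_\e(y) = \eta_{2r}(F^{-1}_{y_0}(y))M_{y_0}(F^{-1}_{y_0}(y))$, the quadratic form splits as

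$Q(u_\e) + 2\e^{1/2}\textbf{c}\,B(u_\e,\Phi_\e) + \e\,\textbf{c}^2 Q(\Phi_\e),$

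where $Q$ is the full bilinear form associated to $-\D+h$ and $B$ its polarization. First I would handle $Q(u_\e)$: since $w$ is the flat ground state, the leading term is $\int|\n w|^2 = S_{3,\s}$; in dimension $3$ the curvature contributions from Lemma \ref{Expansion} carry powers $\e^2$ (as $\int_{\B_{r/\e}}|z|^2|\n w|^2$ is controlled via \eqref{eq:est-nw-sq-3D}) and the $h$-term is $\e^2 h(y_0)\int w^2 = O(\e^2 \cdot r/\e) = O(\e r)$ by the second estimate in \eqref{eq:est-nw-sq-3D}, hence negligible against the target order $\e$. So $Q(u_\e) = S_{3,\s} + o(\e)$ after accounting for the cutoff errors. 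Next, the term $\e\,\textbf{c}^2 Q(\Phi_\e)$ is $O(\e)$ with a bounded coefficient (since $M_{y_0}\in C^{1,\varrho}$ by \eqref{eq:regul-beta}), so it contributes at order $\e$ but I expect it to combine with boundary terms from the cross term. The heart is the cross term $2\e^{1/2}\textbf{c}\,B(u_\e,\Phi_\e)$: integrating by parts and using $-\D w = S_{3,\s}|z|^{-\s}w^{2^*_\s-1}$ together with the fact that $M_{y_0}$ is harmonic-up-to-$h$ (from \eqref{eq:Green-3D}, \eqref{eq:Green-R3-3D}), the bulk integrals should reduce to a boundary/singular contribution concentrated near $y_0$. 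This is exactly where Lemma \ref{lem:v-to-cR} is used: rescaling $v_\e = \e^{-1}w(\cdot/\e) \to \textbf{c}/|x|$ and $\n v_\e \to -\textbf{c}\,x/|x|^3$, one evaluates a surface integral on a small sphere and picks out $M_{y_0}(0) = \textbf{m}(y_0)$, producing the term $-\e\pi^2\textbf{m}(y_0)\textbf{c}^2$ after matching the constant $\pi^2$ (which comes from the $3$-dimensional solid angle $4\pi$ and the factor $\textbf{c}$).

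For the denominator, I would expand $\left(\int_\O \rho_\G^{-\s}|\Psi_\e|^{2^*_\s}\right)^{2/2^*_\s}$. The dominant region is the concentration zone where $\Psi_\e \approx u_\e$; here $\int \rho_\G^{-\s}|u_\e|^{2^*_\s} = 1 + O(\e^2 \cdot |\k|^2) + O(\text{tail})$ by the change of variables $y = F_{y_0}(\e x)$, \eqref{eq:rho_Gamm-is-mod-z}, and \eqref{DeterminantMetric}, with the tail controlled by \eqref{eq:est-L2-star-3D} as $O(r^{\s-3}\e^{3-\s})$. The correction term $\e^{1/2}\Phi_\e$ contributes to $|\Psi_\e|^{2^*_\s}$ through the binomial expansion $|u_\e + \e^{1/2}\textbf{c}\Phi_\e|^{2^*_\s} \approx u_\e^{2^*_\s} + 2^*_\s\,\e^{1/2}\textbf{c}\,u_\e^{2^*_\s-1}\Phi_\e + \dots$; the cross term here, weighted by $\rho_\G^{-\s}$ and integrated, is estimated using the middle integral of \eqref{eq:est-L2-star-3D} (the $\e\int_{\B_{4r/\e}\setminus\B_{r/\e}}|z|^{-\s}w^{2^*_\s-1}$ piece) and turns out to be $o(\e)$. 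Thus the denominator is $1 + o(\e)$, and Taylor-expanding the $2/2^*_\s$ power leaves it as $1 + o(\e)$ as well.

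Combining numerator and denominator via $J(\Psi_\e) = \frac{S_{3,\s} - \e\pi^2\textbf{m}(y_0)\textbf{c}^2 + o(\e)}{1 + o(\e)}$ and expanding gives the claimed expansion with remainder $\calO_r(\e_n)$ satisfying $\lim_{r\to 0}\lim_{n\to\infty}\e_n^{-1}\calO_r(\e_n) = 0$; the nested limits are forced because the cutoff-error terms carry factors like $\e/r$ or $r^{\s-3}\e^{3-\s}$ that vanish only after first sending $n\to\infty$ (so $\e_n\to 0$) and then $r\to 0$. \textbf{The main obstacle} I anticipate is the rigorous evaluation of the cross term $B(u_\e,\Phi_\e)$ and extracting precisely $-\pi^2\textbf{m}(y_0)\textbf{c}^2$ at order $\e$: one must justify passing to the limit in the rescaled integrals (this is exactly why a \emph{discrete} sequence $(\e_n)$ is needed, since the almost-everywhere and $C^1_{loc}$ convergence in Lemma \ref{lem:v-to-cR} is only available along a subsequence, $w$ not being explicit in dimension $3$), and carefully track how the metric distortion from $F_{y_0}$ and the two different cutoff scales $r$ and $2r$ interact without corrupting the $\e$-order coefficient. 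The antisymmetry \eqref{eq:tau-antisymm} and the vanishing of odd moments in $z$ will be needed to kill spurious lower-order contributions from the metric expansion, just as in the $N\ge 4$ case.
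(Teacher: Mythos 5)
There is a genuine gap, and it sits in your treatment of the denominator. You check the cross term between $u_\e$ and $\e^{1/2}\textbf{c}\,\ti M_{2r}$ only on the annulus $F(\B_{4r})\setminus F(\B_r)$ (where \eqref{eq:est-L2-star-3D} indeed gives $o(\e)$) and conclude that the denominator is $1+o(\e)$. But the cross term over the concentration region, $2^*_\s\e^{1/2}\textbf{c}\int_{F(\B_r)}\rho_\G^{-\s}u_\e^{2^*_\s-1}\ti M_{2r}\,dy$, is genuinely of order $\e$ with a coefficient proportional to the mass: testing \eqref{eq:w-gorund-3D} against $B_\e(x)=M(\e x)\sqrt{|g|}(\e x)$ and integrating by parts gives
$$
S_{3,\s}\,\e\int_{\B_{r/\e}}|z|^{-\s}w^{2^*_\s-1}B_\e\,dx=-\e\int_{\de\B_r}B_1\,\frac{\de v_\e}{\de\nu}\,d\s(x)+\calO_r(\e)\;\longrightarrow\;-\e\,\textbf{c}\,\textbf{m}(y_0)\int_{\de\B_r}\frac{\de\cR}{\de\nu}\,d\s(x),
$$
which is of size $\Theta(\e)$ since $\int_{\de\B_r}\frac{\de\cR}{\de\nu}\,d\s=-\pi^2(1+r^2)$. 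This is the content of the paper's Lemma \ref{lem:expans-denom-3D}: the denominator equals $1-\frac{2}{S_{3,\s}}\e\,\textbf{m}(y_0)\textbf{c}^2\int_{\de\B_r}\frac{\de\cR}{\de\nu}\,d\s+\calO_r(\e)$, and this term is decisive, not a refinement. Indeed the numerator's mass contribution is $-\e\textbf{c}^2\textbf{m}(y_0)\int_{\de\B_r}\frac{\de\cR}{\de\nu}\,d\s=+\e\pi^2\textbf{m}(y_0)\textbf{c}^2(1+r^2)$, i.e.\ \emph{positive} for positive mass, and the final coefficient $-\e\pi^2\textbf{m}(y_0)\textbf{c}^2$ arises from the quotient as $(1-2)\cdot\e\pi^2\textbf{m}(y_0)\textbf{c}^2$. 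With your denominator the expansion would come out as $J(\Psi_\e)=S_{3,\s}+\e\pi^2\textbf{m}(y_0)\textbf{c}^2+o(\e)$ --- the wrong sign, fatal for Theorem \ref{th:main2}. You land on the stated formula only because you also assign a negative sign to the numerator's cross term, and the two errors cancel.

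A secondary problem is the claim $Q(u_\e)=S_{3,\s}+o(\e)$. The tail $\int_{\R^3\setminus\B_{r/\e}}|\n w|^2\,dx$ and the cutoff contributions are of size $\Theta(\e/r)$, which for fixed $r$ is order $\e$ with a coefficient that blows up as $r\to0$; this is neither $o(\e)$ nor $\calO_r(\e)$. These terms disappear only through an exact cancellation between the boundary term $\int_{\de\B_{r/\e}}w\,\frac{\de w}{\de\nu}\,d\s\to\e\textbf{c}^2\int_{\de\B_r}\cR\,\frac{\de\cR}{\de\nu}\,d\s$ produced in the inner region and its negative, obtained in the outer region by recombining the three pieces into $|\n(\eta_r\cR+M)|^2=|\n G(\cdot+y_0,y_0)|^2$ and integrating by parts with \eqref{eq:Green-3D}. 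This matching of the singular part of $v_\e$ with that of the Green function is precisely what Lemma \ref{lem:v-to-cR} is for, and it is incompatible with estimating $Q(u_\e)$, $B(u_\e,\Phi_\e)$ and $Q(\Phi_\e)$ separately as you propose; your remark that the pieces ``should combine'' points in the right direction but is not carried out, so the argument does not close as written.
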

The proof of this proposition will be separated  into two steps given by Lemma \ref{lem:expans-num-3D} and Lemma \ref{lem:expans-denom-3D} below.
To alleviate the notations, we will write $\e$ instead of $\e_n$ and  we will remove the subscript  $y_0$, by writing $M$ and $F$ in the place of ${M}_{y_0}$ and $F_{y_0}$ respectively.
We define 
$$
\ti \eta_r(y):=\eta_r(F^{-1}(y)),\qquad V_\e(y):=v_\e(F^{-1}(y)) \qquad\textrm{ and } \qquad \ti M_{2r}(y):=\eta_{2r}(F^{-1}(y) )  M ( F^{-1}(y))  ,
$$
where $v_\e(x)=\e^{-1} w\left(\frac{x}{\e} \right).$ With these notations, \eqref{eq:TestFunction-Om-3D} becomes
\be  \label{eq:def-W-eps-3D}
\Psi_\e (y) = u_\e(y)+ \e^{\frac{1}{2}}  \textbf{c}\, \ti M_{2r}(y)= \e^{\frac{1}{2}}   V_\e(y)+    \e^{\frac{1}{2}}  \textbf{c}\, \ti M_{2r}(y) .
\ee
We first  consider  the numerator in \eqref{Expansion-no-h}.
\begin{lemma}\label{lem:expans-num-3D} 
We have 
\begin{align*}
\int_\O |\n \Psi_\e|^2 dy+  \int_{\O} h  \Psi_\e^2 dy=&S_{3,\s}    - \e \textbf{m}(y_0) \textbf{c}^2\int_{\de \B_r }  \frac{\de {\cR}}{\de \nu} d\s(x)+\calO_r(\e) ,
\end{align*}
where $\nu$ is the unit outer normal of $\B_r$.

\end{lemma}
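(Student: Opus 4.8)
The plan is to work in the coordinates $x=F_{y_0}^{-1}(y)$, in which $\Psi_\e(F_{y_0}(x))=\e^{1/2}\big(\eta_r(x)v_\e(x)+\textbf{c}\,\eta_{2r}(x)M(x)\big)$ with $v_\e(x)=\e^{-1}w(x/\e)$, and to compute the numerator by a single global integration by parts after flattening the metric. First I would discard the metric: by Lemma~\ref{MaMetricMetric} and Lemma~\ref{lem:to-prove} the difference between $\int|\n\Psi_\e|^2_g\sqrt{|g|}$ and the Euclidean $\int|\n\Psi_\e|^2$ is a curvature-weighted integral which, after rescaling, is controlled by $\e^2\int_{\B_{r/\e}}|z|^2|\n w|^2$; in dimension three Corollary~\ref{cor:dec-est-w} makes this grow at most like $\e^2\cdot(r/\e)=\e r=\calO_r(\e)$. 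Similarly, since $\int_{\B_{r/\e}}w^2=O(r/\e)$ by Lemma~\ref{lem:estimates-for-3D}, the zeroth-order term $\int_\O h\Psi_\e^2=\e\int h(\eta_r v_\e+\textbf{c}\,\eta_{2r}M)^2$ is $O(\e r)=\calO_r(\e)$. Writing $\Psi_\e=\e^{1/2}\phi_\e$ with $\phi_\e=\eta_r v_\e+\textbf{c}\,\eta_{2r}M\in H^1_0(\O)$, it then remains to expand $\e\int_\O(-\D\phi_\e+h\phi_\e)\phi_\e$.

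I would insert the two equations $-\D v_\e=\e^{2-\s}S_{3,\s}|z|^{-\s}v_\e^{2^*_\s-1}$ from \eqref{eq:eq-for-v-eps} and $-\D M+hM=2\n\eta_r\cdot\n\cR+\cR\D\eta_r-h\eta_r\cR$, where the Dirac masses at the origin cancel thanks to the normalization \eqref{eq:expand-Green-trace} of $G$. The diagonal bubble term $\e\cdot\e^{2-\s}S_{3,\s}\int\eta_r^2|z|^{-\s}v_\e^{2^*_\s}$ produces $S_{3,\s}$: indeed $\int_{\R^3}|z|^{-\s}v_\e^{2^*_\s}=\e^{\s-3}$, and the excised tail is $\calO_r(\e)$ by \eqref{eq:est-L2-star-3D}. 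The remaining pieces fall into two groups. The cut-off commutators of the bubble against itself, together with the part of the $M$-source paired against $\eta_r v_\e$, are each individually of size $O(1/r)$, but using the convergence $v_\e\to\textbf{c}\,\cR$ of Lemma~\ref{lem:v-to-cR} their leading parts cancel exactly: replacing $v_\e,\n v_\e$ by $\textbf{c}\cR,\textbf{c}\n\cR$ one checks the three $O(1/r)$ contributions add up to $0$, and what is left is $\calO_r(\e)$ by the $C^1_{loc}$ convergence on the fixed transition annulus.

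The single surviving contribution of order one is the $M$-source paired against $\textbf{c}\,\eta_{2r}M$, namely $\textbf{c}^2\int\eta_{2r}^2\,M\,(2\n\eta_r\cdot\n\cR+\cR\D\eta_r)$. Here I would use that $M$ is continuous with $M(0)=\textbf{m}(y_0)$ by \eqref{eq:def-mass}, so that on the transition annulus $\{r<|t|+|z|<2r\}$ one has $M=\textbf{m}(y_0)+O(r)$ (the modulus of continuity being controlled by \eqref{eq:regul-beta}); and that $\int_{\R^3}\big(2\n\eta_r\cdot\n\cR+\cR\D\eta_r\big)=-\int_{\de\B_r}\frac{\de\cR}{\de\nu}\,d\s$, which follows from the divergence theorem together with $-\D\cR=4\pi\delta_0$ (both sides equal $4\pi$, the flux being independent of the radius since $\cR$ is harmonic away from the origin). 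Collecting the factor $\e$, this yields exactly $-\e\,\textbf{m}(y_0)\textbf{c}^2\int_{\de\B_r}\frac{\de\cR}{\de\nu}\,d\s$, which together with the leading $S_{3,\s}$ and the $\calO_r(\e)$ errors is the assertion.

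I expect the order-one bookkeeping to be the main obstacle: the cut-off terms are individually $O(1/r)$ — far larger than the $O(\e)$ effect one is after — so the whole expansion hinges on the exact cancellation of these $O(1/r)$ pieces, and this cancellation only becomes visible once $v_\e$ is replaced by its limit $\textbf{c}\,\cR$. Since in three dimensions $w$ is not explicit, not radial and not smooth, this step must be routed through the merely subsequential $C^1_{loc}(\R^3\setminus\{z=0\})$ convergence of Lemma~\ref{lem:v-to-cR}; as $\de\B_r$ and the annulus meet the singular set $\{z=0\}$, the limits there have to be justified by dominated convergence using the pointwise bounds \eqref{eq:up-low-bound-v-eps-3D}. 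Finally, everything is meaningful only in the iterated sense $\lim_{r\to0}\lim_{n\to\infty}$, every error being of the form $O(\e r)$ or $o_\e(1)\cdot\e$, so one must send $\e=\e_n\to0$ at $r$ fixed before letting $r\to0$, which is precisely what $\calO_r(\e)$ records.
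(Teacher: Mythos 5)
Your global integration-by-parts strategy is a legitimate reorganization of the paper's argument (the paper instead splits $\O$ at $\de\B_r$ and cancels the two $O(\e/r)$ boundary fluxes $\pm\e\,\textbf{c}^2\int_{\de\B_r}\cR\,\frac{\de\cR}{\de\nu}\,d\s$ coming from the inner and outer regions), and your final formula is correct. But your enumeration of the order-$\e$ terms is incomplete, and the term you single out as ``the single surviving contribution'' is in fact cancelled by one of the terms you omit. Writing $\phi_\e=\eta_r v_\e+\textbf{c}\,\eta_{2r}M$ and $L_r[f]:=2\n\eta_r\cdot\n f+f\,\D\eta_r$, the distribution $-\D\phi_\e+h\phi_\e$ contains, besides the pieces you list, the two pairings against $\textbf{c}\,\eta_{2r}M$ that you never mention: (d) the bubble's own cut-off commutator, $-\e\,\textbf{c}\int L_r[v_\e]\,M\,dx$, and (e) the nonlinear source, $\e^{3-\s}S_{3,\s}\,\textbf{c}\int\eta_r|z|^{-\s}v_\e^{2^*_\s-1}M\,dx$. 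Term (d) is supported on the fixed annulus, so by Lemma \ref{lem:v-to-cR} and dominated convergence it tends to $-\e\,\textbf{c}^2\int L_r[\cR]\,M\,dx$, i.e.\ to \emph{minus} your ``surviving'' term (c); so (c)+(d)$=o(\e)$ at fixed $r$ and your claimed source of the mass term disappears. Term (e) is \emph{not} $o(\e)$: rescaling gives $\e\,S_{3,\s}\,\textbf{c}\int\eta_r(\e u)|z|^{-\s}w^{2^*_\s-1}M(\e u)\,du$, and since $|z|^{-\s}w^{2^*_\s-1}\in L^1(\R^3)$ (it decays like $|z|^{-\s}|x|^{2\s-5}$), this converges to $\e\,S_{3,\s}\,\textbf{c}\,\textbf{m}(y_0)\int_{\R^3}|z|^{-\s}w^{2^*_\s-1}dx$. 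This is where the mass actually enters in your scheme.

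To repair the argument you must therefore add (d) and (e) to the ledger and then prove the flux identity
\begin{equation*}
S_{3,\s}\int_{\R^3}|z|^{-\s}w^{2^*_\s-1}\,dx=\int_{\R^3}(-\D w)\,dx=4\pi\,\textbf{c}=-\,\textbf{c}\int_{\de\B_r}\frac{\de\cR}{\de\nu}\,d\s ,
\end{equation*}
obtained by integrating \eqref{eq:w-gorund-3D} over large balls and using $v_\e\to\textbf{c}\,\cR$ to evaluate the flux of $\n w$ at infinity; only then does (e) produce exactly $-\e\,\textbf{m}(y_0)\textbf{c}^2\int_{\de\B_r}\frac{\de\cR}{\de\nu}\,d\s$. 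This identity is precisely the content of the paper's matching of boundary terms on $\de\B_{r/\e}$ (see \eqref{eq:expans-num-FQr}--\eqref{eq:d-eps-nv-eps}), so it is not a formality you can skip: without it the expansion you wrote closes to $S_{3,\s}+\calO_r(\e)$ with no mass term at all. Two smaller points: your appeal to Lemma \ref{lem:to-prove} to flatten the metric requires cylindrical symmetry, which $\Psi_\e$ lacks because of $M$, so the metric correction for the cross and $M$--$M$ parts needs a separate (crude, $O(\e r)$) estimate as in the paper; and your normalization $\int_{\R^3}|z|^{-\s}v_\e^{2^*_\s}dx=\e^{\s-3}$ is correct, but note that the constant in $\D\cR=-4\pi\delta_0$ must match the normalization \eqref{eq:expand-Green-trace} of $G$ for the Dirac masses in $-\D M+hM$ to cancel.
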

\begin{proof}
Recalling \eqref{eq:def-W-eps-3D}, 
direct computations give
\begin{align} \label{eq:nab-Psi}
\int_{F({Q}_{2r})\setminus F\left(\B_r\right)} |\n \Psi_\e|^2 dy&= \int_{F({Q}_{2r}) \setminus F\left(\B_r\right)} |\n \left( \ti \eta_r u_\e\right)|^2 dy+\e \textbf{c} ^2  \int_{F({Q}_{2r})\setminus F\left(\B_r\right)} |\n  \ti M_{2r} |^2 dy \nonumber\\
&+2 \e^{1/2}  \textbf{c}  \int_{F({Q}_{2r}) \setminus F\left(\B_r\right)} \n \left( \ti \eta_r u_\e\right) \cdot \n \ti M_{2r} dy \nonumber\\
 &= \e \int_{F({Q}_{2r}) \setminus F\left(\B_r\right)} |\n \left( \ti \eta_r V_\e\right)|^2 dy+\e  \textbf{c}^2   \int_{F({Q}_{2r}) \setminus F\left(\B_r\right)} |\n \ti  M_{2r} |^2 dy \nonumber\\
&+2 \e  \textbf{c}  \int_{F({Q}_{2r})\setminus F\left(\B_r\right)} \n \left( \ti \eta_r V_\e\right) \cdot \n \ti M_{2r} dy. 
\end{align}
By \eqref{eq:def-cut-off-cylind}, $\eta_r v_\e= \eta_r \e^{-1}w(\cdot/\e)$ is cylindrically symmetric. Therefore by the change   variable $y=F(x)$ and  using Lemma \ref{lem:to-prove}, we get
\begin{align}\label{eq:enev}
\e \int_{F({Q}_{2r}) \setminus F\left(\B_r\right)} |\n \left( \ti \eta_r V_\e\right)|^2 dy&= \e \int_{{Q}_{2r} \setminus  \B_r } |\n \left(  \eta_r {v}_\e\right)|^2_g \sqrt{g} dx \nonumber\\
&= \e \int_{{Q}_{2r} \setminus  \B_r} |\n \left(  \eta_r {v}_\e\right)|^2 dx+O\left(\e  r^2 \int_{{Q}_{2r} \setminus  \B_r} |\n \left(  \eta_r {v}_\e\right)|^2 dx \right).
\end{align}
By computing, we find that 
\begin{align*}
\e  \int_{{Q}_{2r} \setminus  \B_r} |\n \left(  \eta_r {v}_\e\right)|^2 dx& \leq \e \int_{{Q}_{2r} \setminus  \B_r}   |\n  {v}_\e |^2 dx+\e  \int_{{Q}_{2r} \setminus  \B_r} v_\e^2 |\n \eta_r  |^2 dx + 2 \e \int_{{Q}_{2r} \setminus  \B_r} v_\e  |\n v_\e|| \n\eta_r|   dx  \nonumber\\
& \leq \e\int_{{Q}_{2r} \setminus  \B_r}   |\n  {v}_\e |^2 dx+ \frac{C}{r^2} \e\int_{{Q}_{2r} \setminus  \B_r} v_\e^2  dx + \frac{C}{r}  \e \int_{{Q}_{2r} \setminus  \B_r} v_\e  |\n v_\e|   dx \nonumber\\
&=\int_{\B_{2r/\e} \setminus  \B_{r/\e}}   |\n w |^2 dx+C \frac{\e}{r^2} \int_{\B_{2r/\e} \setminus  \B_{r/\e}}w^2  dx + \frac{C}{r}  \e \int_{\B_{2r/\e} \setminus  \B_{r/\e}}  w  |\n w|   dx.
\end{align*}
From  this and \eqref{eq:est-nw-sq-3D} and \eqref{eq:est-wnw-sq-3D}, we get 
$$
O\left(\e  r^2 \int_{{Q}_{2r} \setminus  \B_r} |\n \left(  \eta_r {v}_\e\right)|^2 dx \right)= \calO_r(\e).
$$
We replace  this in \eqref{eq:enev} to  have 
\begin{align}\label{eq:int-nv-et-v}
\e \int_{F({Q}_{2r}) \setminus F\left(\B_r\right)} |\n \left( \ti \eta_r  V_\e\right)|^2 dy&= \e \int_{{Q}_{2r} \setminus  \B_r}   |\n (\eta_r {v}_\e) |^2 dx + \calO_r(\e).
\end{align}
We have the following estimates
\be \label{eq:est-nv-eps}
0\leq v_\e\leq C |x|^{-1}\quad  \textrm{ for $x\in \R^3\setminus\{0\}$ }\qquad  \textrm{  and  }\qquad |\n v_\e(x)| \leq C |x|^{-2} \quad\textrm{ for $|x|\geq \e$, }
\ee
which easily follows from \eqref{eq:up-low-bound-w-3D} and     Corollary \ref{cor:dec-est-w}.  
By these estimates,  Lemma \ref{MaMetricMetric} and \eqref{eq:regul-beta} together with  the change of variable $y=F(x)$, we have 
\begin{align*} 
\e \int_{F({Q}_{2r})\setminus F\left(\B_r\right)} \n \left( \ti \eta_r V_\e\right) \cdot \n \ti M_{2r} dy=& \e  \int_{ {Q}_{2r}\setminus  \B_{r} } \n \left(  \eta_{ {r} }  v_\e \right) \cdot \n   M   dx\\
&+ O\left( \e  \int_{ {\B}_{2r}\setminus  \B_{r} }|\n v_\e| dx   + \frac{\e}{r}   \int_{ {\B}_{2r}\setminus  \B_{r} } v_\e dx \right)\nonumber\\
= &\e  \int_{ {\B}_{2r}\setminus  \B_{r} } \n \left(  \eta_{ {r} }  v_\e \right) \cdot \n   M  dx + \calO_r(\e).
\end{align*}
This  with \eqref{eq:int-nv-et-v}, \eqref{eq:regul-beta} and \eqref{eq:nab-Psi}  give 
\begin{align*}
\int_{F({Q}_{2r})\setminus F\left(\B_r\right)} |\n \Psi_\e|^2 dy&= \e \int_{ {Q}_{2r} \setminus  \B_r } |\n \left(  \eta_r  {v}_\e\right)|^2 dx+\e  \textbf{c}^2   \int_{ {Q}_{2r}  \setminus  \B_r } |\n (\eta_{2r} M )|^2 dx\\
&+2 \e  \textbf{c}  \int_{{Q}_{2r}\setminus  \B_r } \n \left(  \eta_r  {v}_\e\right) \cdot \n  M  dx+ \calO_r(\e). 
\end{align*}  
Thanks  to Lemma \ref{lem:v-to-cR} and \eqref{eq:est-nv-eps}, we can thus use the dominated convergence theorem to deduce that, as $\e\to 0$, 
\be \label{eq:claim1-est-num}
\int_{ {Q}_{2r} \setminus  \B_r } |\n \left(  \eta_r  {v}_\e\right)|^2 dx= \textbf{c}^2\int_{ {Q}_{2r} \setminus  \B_r } |\n \left(  \eta_r  \cR\right)|^2 dx+o(1).
\ee
Similarly, we easily see that 
$$
 \int_{{Q}_{2r}\setminus  \B_r } \n \left(  \eta_r  {v}_\e\right) \cdot \n  M  dx= \textbf{c} \int_{{Q}_{2r}\setminus  \B_r } \n \left(  \eta_r  \cR \right) \cdot \n  M  dx+o(1)\qquad\textrm{ as $\e\to 0$.}
$$
This and  \eqref{eq:claim1-est-num}, then give
\begin{align}
\int_{F({Q}_{2r})\setminus F\left(\B_r\right)} |\n \Psi_\e|^2 dy 
&= \e\textbf{c}^2 \int_{ {Q}_{2r} \setminus  \B_r } |\n \left(  \eta_r  \cR\right)|^2 dx+\e  \textbf{c}^2   \int_{ {Q}_{2r}  \setminus  \B_r } |\n   M |^2 dx \nonumber\\
&+2 \e  \textbf{c}^2  \int_{{Q}_{2r}\setminus  \B_r } \n \left(  \eta_r  \cR \right) \cdot \n  M  dx+ \calO_r(\e) \nonumber\\
&=\e\textbf{c}^2 \int_{{Q}_{2r}\setminus  \B_r }  | \n ( \eta_r \cR+ M )|^2 dx+  \calO_r(\e). \label{eq:nPs-B2r-Br}
\end{align}
Since the support of $\Psi_\e$ is contained in $ \B_{4r}$ while the one of  $\eta_r$  is in ${Q}_{2r}$,  it is easy to deduce  from  \eqref{eq:regul-beta} that 
\begin{align*}
\int_{\O\setminus F\left({Q}_{2r}\right)} |\n \Psi_\e|^2 dy&= \e \textbf{c} ^2  \int_{F(\B_{4r})\setminus F\left({Q}_{2r}\right)} |\n  \ti M_{2r} |^2 dy=  \calO_r(\e)
\end{align*}
and from Lemma \ref{lem:estimates-for-3D}, that
$$
\int_{\O\setminus F\left(\B_r\right)}h | \Psi_\e|^2 dy=\e \textbf{c} ^2  \int_{F(\B_{4r})\setminus F\left(\B_{r}\right)} h | \eta_r V_\e +\ti M_{2r} |^2 dy=   \calO_r(\e).
$$
Therefore by \eqref{eq:nPs-B2r-Br}, we conclude that 
\begin{align*}
\int_{\O\setminus F\left(\B_r\right)} |\n \Psi_\e|^2 dy&+\int_{\O \setminus F\left(\B_r\right)}h | \Psi_\e|^2 dy \\
&\qquad=\e \textbf{c}^2  \int_{{Q}_{2r}\setminus  \B_r }  | \n ( \eta_r \cR+ M )|^2 dx+\e  \textbf{c}^2 \int_{{Q}_{2r}\setminus \B_r  }h(\cdot+y_0) | \eta_r \cR+ M  |^2 dx+ \calO_r(\e).
\end{align*}
Recall that $G(x+y_0,y_0)= \eta_r(x) \cR(x)+ M(x )$  for ever $x\in {Q}_{2r}$ and that  by \eqref{eq:Green-3D},   
 $$
 -\D_x G(x+y_0,y_0)+h(x+y_0) G(x+y_0,y_0)=0  \qquad \textrm{ for every  $x \in Q_{2r}\setminus Q_r $. }
 $$
 Therefore,   by integration by parts,   we find that
\begin{align*} 
\int_{\O\setminus F\left(\B_r\right)} |\n \Psi_\e|^2 dy+&\int_{\O\setminus F\left(\B_r\right)}h | \Psi_\e|^2 dy=\textbf{c}^2\int_{\de ({{Q}_{2r}\setminus \B_r )}} ( \eta_r \cR+ M ) \frac{\de ( \eta_r \cR+ M )}{\de \ov \nu}\s(x)+\calO_r(\e),
\end{align*} 
where $\ov \nu $  is the exterior normal vectorfield to ${Q}_{2r}\setminus \B_r  $. 
Thanks to   \eqref{eq:regul-beta}, we finally get 
\begin{align}\label{eq:nPsi-Ome-Br}
\int_{\O\setminus F\left(\B_r\right)} |\n \Psi_\e|^2 dy+&\int_{\O\setminus F\left(\B_r\right)}h | \Psi_\e|^2 dy 
&=-\e  \textbf{c}^2\int_{\de  \B_r } {\cR} \frac{\de {\cR}}{\de \nu} d\s(x)- \e \textbf{c}^2 \int_{\de  \B_r } M   \frac{\de {\cR}}{\de \nu} d\s(x) 
 + \calO_r(\e),
\end{align} 
where $\nu$ is  the exterior normal vectorfield to $  \B_r  $.\\
 Next we make the  expansion of $\int_{F\left(\B_r\right)} |\n \Psi_\e|^2 dy$ for $r$ and $\e$ small. First, we observe that, by  Lemma \ref{lem:estimates-for-3D} and \eqref{eq:regul-beta}, we have 
\begin{align*}
\int_{F\left(\B_r\right)} |\n \Psi_\e|^2 dy &=\int_{F\left(\B_r\right)} |\n u_\e|^2 dy+\e \textbf{c}^2 \int_{F\left(\B_r\right)} |\n M |^2 dy+2\e^{1/2}\textbf{c} \int_{F\left(\B_r\right)} \n u_\e \cdot \n \ti M_{2r} dy\\
&= \int_{ \B_{r/\e}} |\n  w|^2 dx +O\left( \e^2 \int_{ \B_{r/\e}} |x|^2 |\n  w|^2 dx +\e^{2}   \int_{   \B_{r/\e} } |\n  w |   dx \right) +\calO_r(\e)\\
&=  \int_{ \B_{r/\e}} |\n  w|^2 dx+ \calO_r(\e).
\end{align*}
By integration by parts and using \eqref{eq:est-L2-star-3D}, we deduce that 
\begin{align}\label{eq:expans-num-FQr}
\int_{F\left(\B_r\right)} |\n \Psi_\e|^2 dy 
&= S_{3,\s}  \int_{ \B_{r/\e}} |z|^{-\s} w^{2^*_\s}   dx+ \int_{\de  \B_{r/\e}} w\frac{\de w }{\de \nu } d\s(x)+\calO_r(\e) \nonumber\\
&=  S_{3,\s}  + \e\int_{\de  \B_{r}} v_\e\frac{\de v_\e }{\de \nu } d\s(x) + \calO_r(\e). 
\end{align}
Now  \eqref{eq:est-nv-eps},   \eqref{eq:nv-eps-to-nv-C1} and the dominated convergence theorem yield, for  fixed $r>0$ and $\e\to 0$,
\begin{align}\label{eq:d-eps-nv-eps}
\int_{\de  \B_{r}} v_\e\frac{\de v_\e }{\de \nu } d\s(x)&=\int_{\de B^2_{\R^2}(0,r)}\int_{-r}^r v_\e(t,z)\n v_\e(t,z)\cdot\frac{z}{|z|} d\s(z) dt+2\int_{B^2_{\R^2}} v_\e(r,z)  \de _tv_\e(r,z) dz \nonumber\\
&= \textbf{c}^2\int_{\de B^2_{\R^2}(0,r)}\int_{-r}^r \cR(t,z)\n \cR(t,z)\cdot\frac{z}{|z|} d\s(z) dt+2 \textbf{c}^2\int_{B^2_{\R^2}} \cR(r,z) \de _t \cR(r,z) dz+o(1)\nonumber\\
& = \textbf{c}^2\int_{\de  \B_r } {\cR} \frac{\de {\cR}}{\de \nu} d\s(x) + o(1).
\end{align}
Moreover  \eqref{eq:est-nw-3D} implies that
\begin{align*}
  \int_{F(\B_{r})} h  \Psi_\e^2 dy  = \calO_r(\e).
\end{align*} 
From this together with \eqref{eq:expans-num-FQr} and \eqref{eq:d-eps-nv-eps}, we obtain
\begin{align*}
\int_{F\left(\B_r\right)} |\n \Psi_\e|^2 dy +  \int_{F(\B_{r})} h  \Psi_\e^2 dy
&=  S_{3,\s}  +  \textbf{c}^2\e\int_{\de  \B_r } {\cR} \frac{\de {\cR}}{\de \nu} d\s(x)  +\calO_r(\e). 
\end{align*}
Combining this with \eqref{eq:nPsi-Ome-Br}, we then  have 
\begin{align} \label{eq:Dirichlet-Psi-eps}
\int_\O |\n \Psi_\e|^2 dy+  \int_{\O} h  \Psi_\e^2 dy=&S_{3,\s}    - \e \textbf{c}^2\int_{\de \B_r } M \frac{\de {\cR}}{\de \nu} d\s(x)+\calO_r(\e) +o\left(\e\right).
\end{align}
Since (recalling \eqref{eq:def-mass}) $M(y)=M(0)+O(r)=\textbf{m}(y_0)+O(r)$ in $\B_{2r}$, we get the claimed result in the statement of the lemma.
\end{proof}

The following result together with the previous lemma provides the proof of Proposition\ref{Expansion-no-h}.

\begin{lemma}\label{lem:expans-denom-3D}
We have
$$
\begin{array}{ll}
\displaystyle \left( \int_{\O} \rho^{-\s}_\G |\Psi_{\e}|^{2^*_\s} dy \right)^{\frac{2}{2^*_\s}}&  =
   \displaystyle 1
- \frac{2}{S_{3,\s}} {\e} \textbf{m}(y_0) \textbf{c}^2   \int_{\de \B_{r}} \frac{\de \cR}{\de \nu} d\s(x)  +\calO_r(\e).
\end{array}
$$
\end{lemma}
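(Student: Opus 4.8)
The plan is to expand the denominator $\left(\int_\O \rho^{-\s}_\G |\Psi_\e|^{2^*_\s}\,dy\right)^{2/2^*_\s}$ by splitting the domain of integration into the inner region $F(\B_r)$, where $\Psi_\e$ is essentially $\e^{1/2}V_\e$ (the rescaled ground state), and the outer region $\O\setminus F(\B_r)$, where $\Psi_\e$ is of order $\e^{1/2}$ and contributes only to the error $\calO_r(\e)$. On $F(\B_r)$ I would perform the change of variables $y=F(x)$ and then rescale $x=\e\xi$, so that $\rho_\G^{-\s}=|z|^{-\s}$ becomes $\e^{-\s}$ times the rescaled weight, and the volume element $\sqrt{|g|}\,dx$ is controlled via \eqref{DeterminantMetric}. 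The leading term will reproduce $\int_{\R^3}|z|^{-\s}w^{2^*_\s}\,dx=1$, since $w$ is normalized by \eqref{eq:w-gorund-3D}, giving the constant $1$ in the statement.

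\textbf{The correction term} comes from the cross interaction between $V_\e$ and the mass part $\textbf{c}\,\ti M_{2r}$ inside the expansion of $|\Psi_\e|^{2^*_\s}$. I would Taylor-expand $|a+b|^{2^*_\s}=a^{2^*_\s}+2^*_\s a^{2^*_\s-1}b+O(\ldots)$ with $a=\e^{1/2}V_\e$ and $b=\e^{1/2}\textbf{c}\,\ti M_{2r}$; the dominant non-trivial contribution is
$$
2^*_\s\,\e^{1/2}\textbf{c}\int_{F(\B_r)}\rho_\G^{-\s}(\e^{1/2}V_\e)^{2^*_\s-1}\ti M_{2r}\,dy,
$$
which after rescaling and invoking the pointwise convergence $v_\e\to \textbf{c}\,\cR$ from Lemma~\ref{lem:v-to-cR}, together with the estimate \eqref{eq:est-L2-star-3D} controlling the $L^{2^*_\s-1}$-type tails, should collapse to a surface integral over $\de\B_r$ through the equation \eqref{eq:w-gorund-3D} satisfied by $w$ (integrating $-\D w = S_{3,\s}|z|^{-\s}w^{2^*_\s-1}$ against the mass gives a boundary flux term). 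Using $M=\textbf{m}(y_0)+O(r)$ near the origin and $v_\e^{2^*_\s-1}$ scaling will produce the factor $\textbf{m}(y_0)\textbf{c}^2\int_{\de\B_r}\frac{\de\cR}{\de\nu}\,d\s$.

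\textbf{Finally} I would combine the leading term $1$ with this first-order correction, obtaining an expression of the form $1+\e\cdot(\text{const})\cdot\textbf{m}(y_0)\textbf{c}^2\int_{\de\B_r}\frac{\de\cR}{\de\nu}\,d\s+\calO_r(\e)$, and then apply the elementary expansion $(1+x)^{2/2^*_\s}=1+\frac{2}{2^*_\s}x+O(x^2)$ to raise to the power $2/2^*_\s$, which produces the claimed prefactor $-\frac{2}{S_{3,\s}}$ after matching constants (the $S_{3,\s}$ entering through the $w$-equation and the $2/2^*_\s$ from the Taylor step). Throughout, all remainder terms are absorbed into $\calO_r(\e)$ using the decay estimates of Corollary~\ref{cor:dec-est-w} and the bounds of Lemma~\ref{lem:estimates-for-3D}.

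\textbf{The main obstacle} I anticipate is the careful tracking of the cross term's scaling and the conversion of the bulk integral into a boundary integral over $\de\B_r$: one must exploit the harmonicity structure and the equation \eqref{eq:w-gorund-3D} precisely, and ensure that the interior rescaling limits (via dominated convergence, justified by \eqref{eq:est-nv-eps} and Lemma~\ref{lem:v-to-cR}) commute correctly so that the residual is genuinely $o(\e)$ for fixed $r$ and then $\calO_r(\e)$ after sending $r\to 0$. Matching the exact numerical constant $-\frac{2}{S_{3,\s}}$ requires keeping meticulous account of the factor $S_{3,\s}$ that appears when one replaces $w^{2^*_\s-1}$ via the Euler–Lagrange equation, and this bookkeeping is where an error is most likely to slip in.
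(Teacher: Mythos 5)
Your plan follows essentially the same route as the paper's proof: split the integral into $F(\B_r)$ and the outer annulus, expand $|a+b|^{2^*_\s}$ to isolate the cross term $2^*_\s\e^{1/2}\textbf{c}\int\rho_\G^{-\s}|u_\e|^{2^*_\s-1}\ti M_{2r}\,dy$, convert it into a boundary flux on $\de\B_r$ by testing the equation \eqref{eq:w-gorund-3D} against the mass function and integrating by parts, pass to the limit via Lemma \ref{lem:v-to-cR}, and finish with the Taylor expansion of $t\mapsto t^{2/2^*_\s}$. The constant bookkeeping you describe ($\frac{2}{2^*_\s}\cdot 2^*_\s=2$ from the Taylor step combined with the $1/S_{3,\s}$ from the Euler--Lagrange equation) is exactly how the paper obtains the prefactor $-\frac{2}{S_{3,\s}}$.
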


\begin{proof}

 Since $2^*_\s>2$, there exists a positive constant $C(\s)$  such that
\begin{equation*}
||a+b|^{2^*_\s}-|a|^{2^*_\s}-2^*_\s ab |a|^{2^*_\s-2}| \leq C(\s) \left(|a|^{2^*_\s-2} b^2+|b|^{2^*_\s}\right)\qquad\textrm{  for all $a,b \in \R$.}
\end{equation*}
As a consequence, we obtain
\begin{align}\label{eq:expan-L-2-star-1}
\displaystyle\int_{\O} &\rho^{-\s}_\G |\Psi_{\e}|^{2^*_\s} dy= \displaystyle \int_{F(\B_{r})} \rho^{-\s}_\G |u_{\e}+ \e^{\frac{1}{2}} \ti {M}_{2r}|^{2^*_\s} dy+ \int_{F(\B_{4r}) \setminus F(\B_{r})} \rho^{-\s}_\G |W_{\e}+ \e^{\frac{1}{2}} \ti {M}_{2r}|^{2^*_\s} dy \nonumber\\
&=\displaystyle \int_{F(\B_{r})} \rho^{-\s}_\G | u_{\e}|^{2^*_\s} dy
+
2^*_\s \textbf{c} {\e}^{1/2} \int_{F(\B_{r})} \rho^{-\s}_\G | u_{\e}|^{2^*_\s-1}   \ti M_{2r} dy \nonumber\\
&\quad  \displaystyle + O\left(\int_{F\left(\B_{ 4r}\right)}   \rho^{-\s}_\G |\eta_r u_{\e}|^{2^*_\s-2} \left({\e}^{1/2}\ti {M}_{2r} \right)^2 dy
+ \int_{F\left(\B_{ 4r}\right)} \rho^{-\s}_\G |{\e}^{1/2} \ti {M}_{2r} |^{2^*_\s} dy\right) \nonumber\\
&\quad  \displaystyle+O \left(  \int_{F(\B_{4r}) \setminus F(\B_{r})} \rho^{-\s}_\G | u_{\e}|^{2^*_\s} dy
+
2^*_\s \textbf{c} {\e}^{1/2} \int_{F(\B_{4r}) \setminus F(\B_{r})} \rho^{-\s}_\G | u_{\e}|^{2^*_\s-1}   \ti {M}_{2r} dy   \right).
\end{align}
By H\"{o}lder's inequality and \eqref{DeterminantMetric}, we have
\begin{align}\label{eq:expan-L-2-star-2}
\int_{F\left(\B_{ 4r}\right)}   \rho^{-\s}_\G |\eta u_{\e}|^{2^*_\s-2} \left({\e}^{1/2}\ti \b_r \right)^2 dy& \leq \e \|u_{\e}\|_{L^{2^*_\s}(F(\B_{4 r} );\rho^{-\s})}^{{2^*_\s-2}}\|\ti{M}_{2r}\|_{L^{2^*_\s}(F(\B_{4 r} );\rho^{-\s}_\G)}^{{2} }\nonumber\\
&=\e \|w\|_{L^{2^*_\s}( \B_{ 4r};|z|^{-\s}\sqrt{|g|} )}^{{2^*_\s-2}}  \|\ti{M}_{2r}\|_{L^{2^*_\s}(F(\B_{4 r} );\rho^{-\s}_\G)}^{{2} }\nonumber\\
&\leq \e (1+ Cr)\|\ti{M}_{2r}\|_{L^{2^*_\s}(F(\B_{ 4r} );\rho^{-\s}_\G)}^{{2} }=\calO_r(\e),
\end{align}
recalling that $ \|w\|_{L^{2^*_\s}( \R^3;|z|^{-\s} )}=1 $.
Furthermore, since $2^*_\s>2$, by \eqref{eq:regul-beta}, we easily get 
\begin{align} \label{eq:expan-L-2-star-2-00}
\int_{F\left(\B_{4 r}\right)} \rho^{-\s}_\G |{\e}^{1/2} \ti {M}_{2r} |^{2^*_\s} dy=o(\e).
\end{align}
Moreover  by change of variables and \eqref{eq:est-L2-star-3D}, we also have
\begin{align*}
\int_{F(\B_{4r}) \setminus F(\B_{r})} \rho^{-\s}_\G | u_{\e}|^{2^*_\s} dy&
+
2^*_\s \textbf{c} {\e}^{1/2} \int_{F(\B_{4r}) \setminus F(\B_{r})} \rho^{-\s}_\G | u_{\e}|^{2^*_\s-1}   \ti M_{2r} dy\\
&     \leq C \int_{\B_{4r/\e} \setminus  \B_{r/\e}} |z|^{-\s}  | w|^{2^*_\s} dx
+
C   {\e}   \int_{\B_{4r/\e} \setminus  \B_{r/\e} } |z|^{-\s}  | w|^{2^*_\s-1}    dx\\
&  =o(\e).
\end{align*}
By this, \eqref{eq:expan-L-2-star-1}, \eqref{eq:expan-L-2-star-2-00} and \eqref{eq:expan-L-2-star-2}, it results
\begin{align*}
\displaystyle\int_{\O} \rho^{-\s}_\G |\Psi_{\e}|^{2^*_\s} dy&=  \displaystyle \int_{F(\B_{r})} \rho^{-\s}_\G | u_{\e}|^{2^*_\s} dy
+
2^*_\s \textbf{c} {\e}^{1/2} \int_{F(\B_{r})} \rho^{-\s}_\G | u_{\e}|^{2^*_\s-1}   \ti M_{2r} dy +\calO_r(\e).
\end{align*}
We define  $B_\e(x):=M(\e x)  \sqrt{|g_\e|}(x) =M(\e x)  \sqrt{|g|}(\e x)$. Then
by    the change of variable $y=\frac{F(x)}{\e}$ in the above identity and recalling \eqref{DeterminantMetric}, then by oddness, we have 
\begin{align*}
\displaystyle\int_{\O} \rho^{-\s}_\G |\Psi_{\e}|^{2^*_\s} dy &=  \displaystyle\int_{\B_{r/\e}} |z|^{-\s}  w ^{2^*_\s} \sqrt{|g_\e|}dx
+
2^*_\s {\e}  \textbf{c}  \int_{\B_{r/\e}}|z|^{-\s} | w |^{2^*_\s-1} B_\e dx+ \calO_r(\e)\\
&= \displaystyle\int_{\B_{r/\e}} |z|^{-\s}  w ^{2^*_\s} dx
+
2^*_\s {\e}  \textbf{c}  \int_{\B_{r/\e}}|z|^{-\s} | w |^{2^*_\s-1} B_\e dx+ \calO_r(\e)\\
&\quad \displaystyle+  O\left(   \e^2\int_{\B_{r/\e}} |z|^{-\s} |x|^2  w ^{2^*_\s}  dx\right)\\
&=\displaystyle 1 + 2^*_\s {\e}  \textbf{c}  \int_{\B_{r/\e}}|z|^{-\s} | w |^{2^*_\s-1} B_\e dx\\
&\quad \displaystyle+O\left( \int_{\R^3\setminus \B_{r/\e}} |z|^{-\s}  w ^{2^*_\s}  dx+ \e^2\int_{\B_{r/\e}} |z|^{-\s} |x|^2  w ^{2^*_\s}  dx\right)+ \calO_r(\e).
\end{align*}
  Therefore  by \eqref{eq:est-L2-star-3D} we then have 
\be \label{eq:est-L-2star-Psi-not-ok}
\begin{array}{ll}
\displaystyle \left( \int_{\O} \rho^{-\s}_\G |\Psi_{\e}|^{2^*_\s} dy \right)^{\frac{2}{2^*_\s}}& =
   \displaystyle 1
+
2 {\e}  \textbf{c}  \int_{\B_{r/\e}}|z|^{-\s} | w |^{2^*_\s-1} B_\e(x)  dx +\calO_r(\e).
\end{array}
\ee
Multiply \eqref{eq:w-gorund-3D} by $B_\e\in \cC^1(\overline{Q_r})$ and integrate by parts to get 
\begin{align*}
S_{3,\s} \int_{\B_{r/\e}}|z|^{-\s} | w |^{2^*_\s-1}  B_\e    dx& = \int_{ \B_{r/\e} } \n w \cdot \n B_\e dx -\int_{ \de \B_{r/\e} }B_\e  \frac{ \de  w}{\de \nu}  d\s(x)\\
&= \int_{ \B_{r/\e} } \n w \cdot \n B_\e dx-\int_{ \de \B_{r} }B_1  \frac{ \de  v_\e}{\de \nu}  d\s(x).
\end{align*}
Since $|\n B_\e|\leq C \e$,  by Lemma \ref{lem:v-to-cR} and \eqref{eq:regul-beta}, we then have 
$$
\e   \int_{ \B_{r/\e} } \n w \cdot \n B_\e dx  =O\left( \e^2  \int_{ \B_{r/\e} } |\n w| dx \right)= \calO_r(\e). 
$$
Consequently
\begin{align*}
S_{3,\s} \e  \int_{\B_{r/\e}}|z|^{-\s} | w |^{2^*_\s-1}  B_\e    dx 
&=  -\e\int_{ \de \B_{r} }B_1  \frac{ \de  v_\e}{\de \nu}  d\s(x)+ \calO_r(\e),
\end{align*}
on the one hand.
On the other hand    by Lemma \ref{lem:v-to-cR}, \eqref{eq:regul-beta}  and the dominated convergence theorem, we get
\begin{align*}
 \int_{ \de \B_{r} } B_1  \frac{\de  v_\e}{\de \nu}    d\s(x)= \textbf{c}  \int_{ \de \B_{r} } B_1  \frac{\de  \cR}{\de \nu}    d\s(x)+o(1)= \textbf{c} {M}(0) \int_{ \de \B_{r} }    \frac{\de  \cR}{\de \nu}    d\s(x)+O(r)+o(1),
\end{align*}
so that 
\begin{align*}
 \e \textbf{c}  \int_{\B_{r/\e}}|z|^{-\s} | w |^{2^*_\s-1}  B_\e    dx 
&=  -\e  \textbf{c}^2\frac{1 }{S_{3,\s}}M(0)\int_{ \de \B_{r} }   \frac{ \de  \cR}{\de \nu}  d\s(x)+ \calO_r(\e).
\end{align*}
It then follows from \eqref{eq:est-L-2star-Psi-not-ok} that
$$
\begin{array}{ll}
\displaystyle \left( \int_{\O} \rho^{-\s}_\G |\Psi_{\e}|^{2^*_\s} dy \right)^{\frac{2}{2^*_\s}}& =
   \displaystyle 1 -\frac{2}{S_{3,\s}} {\e}\textbf{c}^2  {M}(0)  \int_{ \de \B_{r} }  \frac{\de  \cR}{\de \nu}    d\s(x)+\calO_r(\e).
\end{array}
$$
Since ${M} (0)=\textbf{m}(y_0)$, see \eqref{eq:def-mass}, the proof of the lemma is thus finished.
\end{proof}

\begin{proof}[Proof of Proposition \ref{Expansion-no-h} (completed)]
By Lemma \ref{lem:expans-num-3D} and Lemma \ref{lem:expans-denom-3D}, we have 
 \begin{align}\label{eq:est-Quotient-3D}
\displaystyle J(\Psi_\e) 
& =  S_{3,\s}-  \e    \textbf{c}^2  \textbf{m}(y_0)  \int_{\de \B_r}\frac{\de \cR}{\de \nu}\, d\s(x)    +\calO_r(\e)  .
\end{align}

Finally, recalling that $\cR(x)=\frac{1}{|x|}$,  we  can compute 
\begin{align*}
\int_{\de \B_r}\frac{\de \cR}{\de \nu}\, d\s(x)&=-  \int_{\de \B_r}\frac{x\cdot \nu (x)}{|x|^3}\, d\s(x)\\
&=- 2 r   \int_{ B_{\R^2}(0, r)}\frac{1 }{r^2+|z|^2}\, dz - 2\pi\int_{-r}^r\frac{r^3}{r^2+t^2}dt\\
&=- \pi^2 (1+ r^2)   .
\end{align*}
From this and \eqref{eq:est-Quotient-3D}, we then  have 
 \begin{align*}
\displaystyle J(\Psi_\e) 
&= S_{3,\s}-  \e \pi^2  \textbf{c}^2 \textbf{m}(y_0)   +\calO_r(\e)  ,
\end{align*}
which finishes the proof.
\end{proof}
As a consequence of   Proposition \ref{Expansion-no-h}, we can now complete the
\begin{proof}[Proof of Theorem \ref{th:main2} (completed)]
The proof  follows from Lemma \ref{Expansion-no-h} that if $\textbf{m}(y_0)>0$ for some $y_0 \in \G$ then $\mu_h(\O,\G)< S_{3,\s}$. The fact that $\mu_h(\O,\G)$ is attained by a positive function is an immediate consequence of Proposition \ref{Pro3333}  belwo.
\end{proof}

\section{Appendix: Existence of minimizer for $\mu_h\left(\O,\G\right)$}\label{s:Section4}
 Let $\O$ be a bounded domain in $\R^N$, $N\geq 3$, and $h$ a continuous function on $\O$. Let  $\G$ be a smooth closed curve $\G  $ contained in $\O$.  We consider 
\be \label{eq:mu-h-O-G-appendix}
\mu_{h}(\O,\G):=\inf_{ u\in  H^1_0(\O)  }\frac{ \displaystyle \int_{\O} |\nabla u|^2 dx +\int_\O h u^2 dy }{  \displaystyle \left(  \int_{\O} \rho_\G^{-\s} |u|^{2^*_\s} dy \right)^{\frac{2}{2^*_\s}}}.
\ee
We also recall  that
\be\label{eq:S-N-sig-appendix}
S_{N,\s}=\inf_{  v\in \cD^{1,2}(\R^N)  }\frac{ \displaystyle  \int_{\R^N} |\nabla v|^2 dx}{   \displaystyle \left(  \int_{\R^N} |z|^{-\s} |v|^{2^*_\s} dx \right)^{\frac{2}{2^*_\s}} },
%
\ee
with $x=(t,z)\in \R\times \R^{N-1}$.
  Our aim in this section is to show   that   { if } $\mu_h\left(\O,\G \right)< S_{N,\s}$   then the best constant  $\mu_h\left(\O,\G\right)$    is achieved. The argument of proof is standard. However, for sake of completeness, we add the proof. We start with the following
\begin{lemma}\label{OuiOui}
Let $\O$ be an open subset of $\R^N$, with $N\geq 3$, and let $\G\subset \O$ be a closed smooth curve.
Then for every $r>0$, there exist  positive constants $  c_r>0$, only  depending on $\O,\G,N,\s$ and $r$, such that  for every  $u \in H^1_0(\O)$  
$$
S_{N,\s} \left(\int_{\O} \rho^{-\s}_\G |u|^{2^*_\s} dy\right)^{2/2^*_\s} \leq(1+ r) \int_{\O} |\n u|^2dy+c_r \left[\int_\O u^2 dy+\left(\int_\O |u|^{2^*_\s} dy\right)^{2/2^*_\s}\right],
$$
where $2^*_\s=\frac{2(N-\s)}{N-2}$ and $\s\in (0, 2)$.
\end{lemma}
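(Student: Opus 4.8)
The plan is to localize the weighted $L^{2^*_\s}$--norm around $\G$ by a partition of unity and to invoke the sharp inequality \eqref{Hardy-Sobolev} (in the case $k=1$) only on small charts, where $\rho_\G$ coincides exactly with the cylindrical weight $|z|^{-\s}$ and the induced metric is close to flat. Concretely, I would fix $r>0$ and, using the compactness of $\G$, choose $\delta>0$ small, finitely many points $y_1,\dots,y_m\in\G$ with the Fermi charts $F_{y_i}\colon Q_{\delta}\to\O$ of Section \ref{s:Geometric-prem}, and a smooth partition of unity $\eta_0,\dots,\eta_m$ on $\O$ normalized so that $\sum_{i=0}^m \eta_i^2\equiv 1$, with $\eta_i$ ($i\ge1$) supported in $F_{y_i}(Q_{\delta})$ and $\eta_0$ supported in the region $\{\rho_\G\ge\delta\}$, where the weight is bounded.

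The key algebraic step is a superadditivity estimate for the weighted norm. Writing $\|v\|_*^{2^*_\s}:=\int_\O\rho_\G^{-\s}|v|^{2^*_\s}\,dy$ and using $\sum_i\eta_i^2=1$, I would split $\int_\O\rho_\G^{-\s}|u|^{2^*_\s}=\sum_i\int_\O\rho_\G^{-\s}(\eta_iu)^2|u|^{2^*_\s-2}$ and apply Hölder's inequality in each summand with exponents $\tfrac{2^*_\s}{2}$ and $\tfrac{2^*_\s}{2^*_\s-2}$, distributing the weight as $\rho_\G^{-\s}=\rho_\G^{-2\s/2^*_\s}\,\rho_\G^{-\s(2^*_\s-2)/2^*_\s}$. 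Dividing out the common factor $\|u\|_*^{2^*_\s-2}$ then yields $\|u\|_*^2\le\sum_{i=0}^m\|\eta_iu\|_*^2$, that is, the square of the global weighted norm is controlled by the sum of the local ones with constant exactly $1$. This is what will let the sharp constant survive.

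Next I would estimate each local term. For $i=0$ the bound $\rho_\G^{-\s}\le\delta^{-\s}$ on $\supp\eta_0$ gives $\|\eta_0u\|_*^2\le\delta^{-2\s/2^*_\s}\big(\int_\O|u|^{2^*_\s}\,dy\big)^{2/2^*_\s}$, which is absorbed into the $c_r$ term. For $i\ge1$ I would pull $\eta_iu$ back through $F_{y_i}$; by \eqref{eq:rho_Gamm-is-mod-z} the weight becomes exactly $|z|^{-\s}$, and the compactly supported pullback $\tilde u_i$ extends by zero to an element of $\cD^{1,2}(\R^N)$, so \eqref{Hardy-Sobolev} yields $S_{N,\s}\|\eta_iu\|_*^2\le(1+C\delta)\int_{\R^N}|\n\tilde u_i|^2\,dx$, the factor $1+C\delta$ coming from $\sqrt{|g|}=1+O(\delta)$ on $Q_\delta$. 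Using Lemma \ref{MaMetricMetric} (namely $\sqrt{|g|}=1+O(\delta)$ and $g^{\al\b}-\d_{\al\b}=O(\delta)$ on $Q_\delta$, from \eqref{DeterminantMetric} and \eqref{InverseMetric}) I can then replace the flat gradient integral by $(1+C\delta)\int_\O|\n(\eta_iu)|^2\,dy$, with constants tending to $1$ as $\delta\to0$.

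Finally I would recombine the gradient terms. Expanding $|\n(\eta_iu)|^2=\eta_i^2|\n u|^2+2\eta_iu\,\n u\cdot\n\eta_i+u^2|\n\eta_i|^2$ and summing over $i$, the cross terms cancel because $\sum_i\eta_i\n\eta_i=\tfrac12\n\big(\sum_i\eta_i^2\big)=0$, so that $\sum_i\int_\O|\n(\eta_iu)|^2\,dy=\int_\O|\n u|^2\,dy+\int_\O u^2\sum_i|\n\eta_i|^2\,dy\le\int_\O|\n u|^2\,dy+c_r\int_\O u^2\,dy$. Collecting the contributions of all $i$, multiplying through by $S_{N,\s}$, and choosing $\delta=\delta(r)$ so small that $C\delta\le r$ gives the asserted inequality with $c_r$ depending only on $\O,\G,N,\s$ and $r$. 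The heart of the argument, and the only delicate point, is precisely the interplay between the Hölder-based superadditivity (with constant $1$) and the cancellation $\sum_i\eta_i\n\eta_i=0$: together they ensure that no multiplicative constant larger than $1+r$ is lost in front of $\int_\O|\n u|^2$, while the metric-distortion estimates of Lemma \ref{MaMetricMetric} enter only as a routine perturbation.
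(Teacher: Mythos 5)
Your argument is correct, and it follows the same overall strategy as the paper (cover a tubular neighborhood of $\G$ by Fermi charts, localize with a partition of unity, flatten the metric via Lemma \ref{MaMetricMetric}, apply the sharp inequality \eqref{Hardy-Sobolev} on each chart, and absorb the commutator terms into $\int_\O u^2$), but it implements the two delicate steps differently. For the decomposition of the weighted norm with constant exactly $1$, the paper normalizes $\sum_i\vp_i=1$, sets $\psi_i=\vp_i^{1/2^*_\s}u$, and uses the subadditivity of $t\mapsto t^{2/2^*_\s}$, whereas you normalize $\sum_i\eta_i^2=1$ and obtain $\|u\|_*^2\le\sum_i\|\eta_iu\|_*^2$ by H\"older; both give the sharp constant at this stage. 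For the recombination of the local Dirichlet energies, the paper expands $|\n(\vp_i^{1/2^*_\s}u)|^2$ and invokes Young's inequality, paying an extra factor $(1+\e)$ and, after bounding $\vp_i^{2/2^*_\s}\le 1$, implicitly a factor reflecting the overlap multiplicity of the cover $\sum_i\int_{T^{y_i}_r}|\n u|^2\,dy$; your normalization $\sum_i\eta_i^2=1$ makes the cross terms cancel exactly and gives $\sum_i\int_\O|\n(\eta_iu)|^2\,dy=\int_\O|\n u|^2\,dy+\int_\O u^2\sum_i|\n\eta_i|^2\,dy$ with no loss, so the only degradation of the constant comes from the metric distortion $O(\d)$. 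In this respect your version is tighter and sidesteps the overlap issue that the paper's proof glosses over. Two small points to make explicit: when you divide by $\|u\|_*^{2^*_\s-2}$ in the H\"older step you should first note that $\|u\|_*$ is finite (e.g.\ by running the crude version of the same localization first, or by a density argument), and in the final recombination the gradient terms you actually need are only those with $i\ge1$, which is harmless since adding the nonnegative $i=0$ term only enlarges the right-hand side.
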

\begin{proof}
We let $r>0$ small. We can cover a tubular neighborhood of $\G$ by a finite number of  sets $\left(T^{y_i}_r\right)_{1\leq i \leq m}$  given by 
$$
T^{y_i}_r:=F_{y_i}\left(Q_r\right), \qquad \textrm{ with $y_i\in\G$. }
$$
We refer to  Section \ref{s:Geometric-prem} for the parameterization  $F_{y_i}:Q_r\to \O$. 
See e.g. \cite[Section 2.27]{Aubin-Book}, there exists  $\left(\vp_i\right)_{1\leq i \leq m}$  a   partition of unity subordinated to this covering such that
\begin{equation}\label{eq:Part-unity}
\sum_i^{m } \vp_i  =1 \qquad \textrm{and} \qquad |\n \vp_i ^{\frac{1}{2^*_\s}} |\leq K \qquad \textrm{ in } U:=\displaystyle \cup_{i=1}^{m} T^{y_i}_r,
\end{equation}
for some constant $K>0$.
We define
\be \label{eq:def-psi_i}
\psi_i(y):=\vp_i^{\frac{1}{2^*_\s}} (y) u(y) \qquad \textrm{ and } \qquad \ti{\psi_i}(x)=\psi_i (F_{y_i}(x)).
\ee
Recall that that $\rho_\G\geq C>0$ on  $\O \setminus U$, for some positive constant $C>0$. Therefore,  since $\frac{2}{2^*_\s}<1$, by \eqref{eq:def-psi_i} we get
\begin{align} \label{eq:Sum-append}
\displaystyle\left(\int_\O \rho^{-\s}_\G |u|^{2^*_\s} dy\right)^{2/2^*_\s} 
&\displaystyle\leq   \left(\int_U \rho^{-\s}_\G \left| u \right|^{2^*_\s} dy\right)^{2/2^*_\s}+ \left(\int_{\O\setminus U} \rho^{-\s}_\G |u|^{2^*_\s} dy\right)^{2/2^*_\s}\nonumber\\
&\displaystyle \leq \left( \sum_i^{m} \int_{T^{y_i}_r} \rho^{-\s}_\G  |\psi_i|^{2^*_\s} dy\right)^{2/2^*_\s}+c_r  \left(\int_\O |u|^{2^*_\s} dy\right)^{2/2^*_\s} \nonumber\\
&\displaystyle \leq\sum_i^{m} \left(  \int_{T^{y_i}_r} \rho^{-\s}_\G  |\psi_i|^{2^*_\s} dy\right)^{2/2^*_\s}+c_r  \left(\int_\O |u|^{2^*_\s} dy\right)^{2/2^*_\s}
\end{align}
By  change of variables and Lemma \ref{MaMetricMetric}, we have
\begin{align*}
\left(\int_{T^{y_i}_r} \rho^{-\s}_\G |\psi_i|^{2^*_\s} dy\right)^{2/2^*_\s} &=\left(\int_{Q_r} |z|^{-\s} |\ti{\psi}_i|^{2^*_\s} \sqrt{|g|}(x) dx\right)^{2/2^*_\s}\\
&\leq \left(1+c r\right) \left(\int_{Q_r} |z|^{-\s} |\ti{\psi}_i|^{2^*_\s}  dx\right)^{2/2^*_\s}.
\end{align*}
In addition the Hardy-Sobolev inequality \eqref{eq:Hard-Sob-sharp} yields
$$
S_{N,\s}  \left(\int_{Q_r} |z|^{-\s} |\ti{\psi}_i|^{2^*_\s}  dx\right)^{2/2^*_\s}\leq  \left( \int_{Q_r} |\n \ti{\psi}_i|^2 dx \right)^{2/2}.
$$
Therefore  by change of variables and  Lemma \ref{MaMetricMetric},  we get
\begin{align*}
S_{N,\s}&\left(\int_{T^{y_i}_r} \rho^{-\s}_\G |\psi_i|^{2^*_\s} dy\right)^{2/2^*_\s}  \leq \left(1+cr\right)\int_{Q_r} |\n \ti{\psi}_i|^2 dx\\
&\leq \left(1+ c' r\right) \int_{T^{y_i}_r} |\n (\phi_i^{\frac{1}{2^*_\s}}  u)|^2 dy=  \left(1+ c'r\right) \int_{T^{y_i}_r}|\phi_i^{\frac{1}{2^*_\s}}\n u+ u \n \phi_i^{\frac{1}{2^*_\s}}|^2 dy+ c_r  \int_\O | u|^2 dy.
\end{align*}
Applying Young's inequality   using \eqref{eq:Part-unity} and \eqref{eq:def-psi_i}, we find that
\begin{align*}
S_{N,\s}\left(\int_{T^{y_i}_r} \rho^{-\s}_\G |\psi_i|^{2^*_\s} dy\right)^{2/2^*_\s}& \leq   \left(1+ c' r\right)(1+\e) \int_{T^{y_i}_r} \phi_i^{\frac{2}{2^*_\s}}|\n u|^2 dy+ c_r(\e)  \int_\O | u|^2 dy\\
&\leq   \left(1+ c' r\right)(1+\e) \int_{T^{y_i}_r} |\n u|^2 dy+ c_r(\e)  \int_\O | u|^2 dy.
\end{align*}
Summing for $i$ equal $1$ to $m$,  we get 
\begin{align*}
S_{N,\s}  \sum_{i=1}^{m}\left(\int_{T^{y_i}_r} \rho^{-\s}_\G |\psi_i|^{2^*_\s} dy\right)^{2/2^*_\s}\leq     \left(1+ c' r\right)(1+\e)    \left( \int_{\O} |\n u|^2 dy\right)^{1/2}+ c_r(\e)  \left(\int_\O | u|^2 dy\right)^{1/2}.
\end{align*}
This together with \eqref{eq:Sum-append} give
$$
S_{N,\s} \left(\int_\O \rho^{-\s}_\G |u|^{2^*_\s} dy\right)^{2/2^*_\s}\leq  \left(1+ c' r\right)(1+\e)   \int_{\O} |\n u|^2 dy + c_r(\e)  \int_\O | u|^2 dy +c_r \left(\ \int_\O |u|^{2^*_\s} dy\right)^{2/2^*_\s}.
$$
Since $\e$ and $r$ can be  chosen arbitrarily small, we get the desired result.
\end{proof}
We can now prove the following existence result.
\begin{proposition}\label{Pro3333}
Consider $\mu_h(\O,\G)$ and $S_{N,\s}$ given by \eqref{eq:mu-h-O-G-appendix} and \eqref{eq:S-N-sig-appendix} respectively.
Suppose that 
\begin{equation}\label{Assumption}
\mu_h\left(\O,\G\right)<S_{N,\s}.
\end{equation}
Then $\mu_h\left(\O,\G\right) $ is achieved by a positive function.
\end{proposition}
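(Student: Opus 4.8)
The plan is to use the direct method of the calculus of variations together with a concentration--compactness analysis, the strict inequality \eqref{Assumption} being used precisely to defeat the possible loss of compactness at the critical exponent $2^*_\s$. First I would fix a minimizing sequence $(u_n)\subset H^1_0(\O)$ for $\mu_h(\O,\G)=:\mu$, normalized so that $\int_\O \rho_\G^{-\s}|u_n|^{2^*_\s}\,dy=1$, whence $\int_\O|\n u_n|^2\,dy+\int_\O h u_n^2\,dy\to\mu$. Coercivity of $-\D+h$ bounds $(u_n)$ in $H^1_0(\O)$, so up to a subsequence $u_n\weak u$ in $H^1_0(\O)$, $u_n\to u$ in $L^2(\O)$, and $u_n\to u$ a.e. Crucially, since $\s>0$ forces $2^*_\s<2^*:=\frac{2N}{N-2}$, the embedding $H^1_0(\O)\embed L^{2^*_\s}(\O)$ is compact, so in fact $u_n\to u$ strongly in $L^{2^*_\s}(\O)$ as well.

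Writing $v_n:=u_n-u\weak 0$, I would split numerator and denominator. Weak convergence in $H^1_0$ gives $\int_\O|\n u_n|^2=\int_\O|\n u|^2+\int_\O|\n v_n|^2+o(1)$, strong $L^2$-convergence gives $\int_\O h u_n^2\to\int_\O h u^2$, and the Brezis--Lieb lemma in the weighted space $L^{2^*_\s}(\O,\rho_\G^{-\s}dy)$ yields $1=\int_\O\rho_\G^{-\s}|u|^{2^*_\s}\,dy+\lim_n\int_\O\rho_\G^{-\s}|v_n|^{2^*_\s}\,dy+o(1)$. Set $a:=\int_\O\rho_\G^{-\s}|u|^{2^*_\s}\,dy$ and $b:=\lim_n\int_\O\rho_\G^{-\s}|v_n|^{2^*_\s}\,dy$, so that $a+b=1$ with $a,b\ge0$. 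The role of Lemma \ref{OuiOui} is to bound the part that may concentrate on $\G$: applied to $v_n$, and using that the lower-order terms $\int_\O v_n^2$ and $(\int_\O|v_n|^{2^*_\s})^{2/2^*_\s}$ both tend to $0$ by the two compact embeddings above, it gives $S_{N,\s}\,b^{2/2^*_\s}\le(1+r)\lim_n\int_\O|\n v_n|^2$ for every $r>0$, hence $S_{N,\s}\,b^{2/2^*_\s}\le\lim_n\int_\O|\n v_n|^2$.

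Combining these with the definition of $\mu$ (which gives $\int_\O|\n u|^2+\int_\O h u^2\ge\mu\,a^{2/2^*_\s}$) produces the key inequality $\mu\ge\mu\,a^{2/2^*_\s}+S_{N,\s}\,b^{2/2^*_\s}$, with $a+b=1$. Here the main obstacle is to exclude the concentration scenario $b>0$, and this is exactly where \eqref{Assumption} enters. Since $2/2^*_\s<1$ one has $a^{2/2^*_\s}+b^{2/2^*_\s}\ge(a+b)^{2/2^*_\s}=1$; if $b>0$ then, because $S_{N,\s}>\mu>0$, the inequality would force $\mu>\mu\,(a^{2/2^*_\s}+b^{2/2^*_\s})\ge\mu$, a contradiction. (Positivity $\mu>0$ follows from the coercivity of $-\D+h$ together with Lemma \ref{OuiOui}.) Therefore $b=0$ and $a=1$, so $u\not\equiv0$ and $\int_\O\rho_\G^{-\s}|u|^{2^*_\s}\,dy=1$; feeding this back forces $\lim_n\int_\O|\n v_n|^2=0$ and $\int_\O|\n u|^2+\int_\O h u^2=\mu$, so $u$ attains $\mu_h(\O,\G)$ (and in fact $u_n\to u$ strongly in $H^1_0(\O)$).

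Finally, to obtain a \emph{positive} minimizer I would replace $u$ by $|u|$, which leaves all three integrals unchanged and is therefore again a minimizer; the Euler--Lagrange equation $-\D u+hu=\mu_h(\O,\G)\,\rho_\G^{-\s}u^{2^*_\s-1}$ then holds weakly with $u\ge0$ and $u\not\equiv0$, and the strong maximum principle yields $u>0$ in $\O$.
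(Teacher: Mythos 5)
Your proposal is correct and follows essentially the same route as the paper: a normalized minimizing sequence, the Brezis--Lieb splitting combined with the almost-sharp localized inequality of Lemma \ref{OuiOui} applied to $u_n-u$, the concavity of $t\mapsto t^{2/2^*_\s}$ to rule out concentration via the strict inequality $\mu_h(\O,\G)<S_{N,\s}$, and finally $|u|$ plus the strong maximum principle for positivity. The only difference is cosmetic bookkeeping (you isolate $a,b$ and derive $\mu\ge\mu a^{2/2^*_\s}+S_{N,\s}b^{2/2^*_\s}$, whereas the paper chains the inequalities into $[S_{N,\s}-\mu_h(\O,\G)](1-a^{2/2^*_\s})\le 0$).
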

\begin{proof}
Let $\left(u_n\right)_{n\in \N}$ be a minimizing sequence for $\mu_h\left(\O,\G\right)$ normalized so that
\begin{equation}\label{Minimization}
\int_\O \rho^{-\s}_\G |u|^{2^*_\s} dx=1 \quad \textrm{ and } \quad \mu_h\left(\O,\G\right)=\int_\O |\n u_n|^2 dx +\int_\O h u_n^2 dx+o(1).
\end{equation}
By coercivity of $-\D+ h$, the sequence $\left(u_n\right)_{n\in \N}$ is bounded in $H^1_0(\O)$ and thus , up to a subsequence, 
$$
u_n \rightharpoonup u \qquad\textrm{weakly in } H^1_0(\O),
$$
and
\be \label{eq:-un-to-u-strong}
u_n \to  u \qquad \textrm{ strongly in } L^p(\O)\quad \textrm{for}\quad 1\leq p < 2^*_0:=\frac{2N}{N-2}. 
\ee
The weak convergence  in  $H^1_0(\O)$ implies that
\begin{equation}\label{SB1}
\int_\O |\n u_n|^2 dx= \int_\O |\n(u_n-u)|^2 dx+\int_\O |\n u|^2 dx+o(1).
\end{equation}
By Brezis-Lieb lemma \cite{BL} and the strong convergence in the Lebesgue spaces $L^p(\O)$, we have
\begin{equation}\label{SB2}
1=\int_\O \rho^{-\s}_\G |u_n|^{2^*_\s} dx=\int_\O \rho^{-\s}_\G|u-u_n|^{2^*_\s} dx+ \int_\O \rho^{-\s}_\G |u|^{2^*_\s} dx+o(1).
\end{equation}
By Lemma \ref{OuiOui}, \eqref{eq:-un-to-u-strong} ---note that $2^*_\s< 2^*_0$ , we then deduce that
\begin{equation}\label{SB3}
S_{N,\s} \left(\int_\O \rho^{-\s}_\G |u-u_n|^{2^*_\s} dx\right)^{2/2^*_\s} \leq(1+r) \int_\O|\n (u-u_n)|^2 dx+o(1).
\end{equation}
Using \eqref{SB1}, \eqref{SB2} and \eqref{SB3}, we have
\begin{align}\label{eq:near-exist}
\displaystyle S_{N,\s}\left(1-\int_\O \rho^{-\s}_\G |u|^{2^*_\s} dx\right)^{2/2^*_\s} &\displaystyle\leq \left(1+r\right) \left(\int_\O |\n u_n|^2 dx -\int_\O |\n u|^2 dx\right)+o(1) \nonumber\\
&\displaystyle= \left(1+r\right) \left(\mu_h\left(\O,\G\right)-\int_\O h u_n^2 dx -\int_\O |\n u|^2 dx\right)+o(1) \nonumber\\
&\displaystyle= \left(1+r\right) \left(\mu_h\left(\O,\G\right)-\int_\O h u^2 dx -\int_\O |\n u|^2 dx\right)+o(1) \nonumber\\
&\displaystyle\leq \left(1+r\right) \mu_h\left(\O,\G\right)\left(1-\left(\int_\O \rho^{-\s}_\G |u|^{2^*_\s} dx\right)^{2/2^*_\s}\right)+o(1).
\end{align}
By  the concavity of the map $t\mapsto t^{2/2^*_\s}$ on $[0,1]$,  we have 
$$
 1 \leq \left(1-\int_\O \rho^{-\s}_\G |u|^{2^*_\s} dx\right)^{2/2^*_\s}+ \left(\int_\O \rho^{-\s}_\G |u|^{2^*_\s} dx\right)^{2/2^*_\s}.
$$
From this,  
then taking the limits respectively as $n \to +\infty$ and as $r \to 0$ in \eqref{eq:near-exist}, we find that
$$
\left[S_{N,\s}-\mu_h(\O,\G) \right]\left(1-\left(\int_\O \rho^{-\s}_\G |u|^{2^*_\s} dx\right)^{2/2^*_\s}\right)\leq 0.
$$
Thanks to  \eqref{Assumption},  we then  get 
$$
1 \leq \int_\O \rho^{-\s}_\G |u|^{2^*_\s} dx .
$$
Since by \eqref{Minimization}  and Fatou's lemma,
$$
1=\int_\O \rho^{-\s}_\G |u_n|^{2^*_\s} dx \geq \int_{\O} \rho^{-\s}_\G |u|^{2^*_\s} dx,
$$
we conclude that
$$
\int_\O \rho^{-\s}_\G |u|^{2^*_\s} dx=1.
$$
It then follows from \eqref{Minimization} that $u_n\to u$ in $L^{2^*_\s}(\O;\rho_\G^{-\s})$ and  thus $u_n\to u$ in $H^1_0(\O)$.
Therefore $u$ is a minimizer for $\mu_h(\O,\G) $.  Since $|u|$  is also a minimizer for $\mu_h(\O,\G)$, we may assume that $u \gneqq0$. Therefore $u>0$ by the maximum principle.
\end{proof}

\end{document}